\documentclass[11pt]{amsart}
\usepackage{amscd, amsmath, amsthm, amssymb}
\usepackage{color}
\usepackage{graphicx}
\usepackage{mathrsfs}
\usepackage[all]{xy}
\usepackage{enumerate}
\usepackage{hyperref}
\usepackage{verbatim}

\newtheorem{theorem}{Theorem}[section]
\newtheorem{lemma}[theorem]{Lemma}
\newtheorem{proposition}[theorem]{Proposition}
\newtheorem{conjecture}[theorem]{Conjecture}
\newtheorem{question}[theorem]{Question}
\newtheorem{corollary}[theorem]{Corollary}

\theoremstyle{definition}
\newtheorem{definition}[theorem]{Definition}
\newtheorem{example}[theorem]{Example}

\newtheorem{setup}[theorem]{Setup}
\newtheorem{construction}[theorem]{Construction}
\newtheorem{assumption}[theorem]{Assumption}

\theoremstyle{remark}
\newtheorem{remark}[theorem]{Remark}
\newtheorem{caution}[theorem]{Caution}

\numberwithin{equation}{section}

 \newcommand\CC{{\mathbb{C}}}
\newcommand\C{{\mathcal{C}}}   \newcommand\GG{{\mathbb{G}}}  \newcommand\II{\mathcal{I}}
\newcommand\LL{{\mathbf{L}}} \newcommand\mm{{\mathbf{m}}} \newcommand\NN{{\mathbb{N}}} \newcommand\OO{{\mathscr{O}}} \newcommand\PP{{\mathbb{P}}} \newcommand\QQ{{\mathbb{Q}}} \newcommand\RR{{\mathbb{R}}}   \newcommand\ZZ{{\mathbb{Z}}}

  \newcommand\Aut{{\rm Aut}}  \newcommand\Bir{{\rm Bir}} \newcommand\Eff{{\rm Eff}}
\newcommand\NE{\overline{{\rm NE}}}

\newcommand\Nef{{\rm Nef}}
\newcommand\Nefe{{\rm Nef}^e}
\newcommand\Nefp{{\rm Nef}^{+}}
\newcommand\Movo{{\rm Mov}}
\newcommand\Movox{{\rm Mov}(X)^{\circ}}
\newcommand\Mov{\overline{{\rm Mov}}}
\newcommand\Move{\overline{{\rm Mov}}^e}

\newcommand\Movp{{\rm Mov}^{+}}
\newcommand\Pic{\text{\rm Pic}}
\newcommand\Psef{{\rm Psef}}

\newcommand\Cone{{\rm Cone}} \newcommand\Cox{{\rm Cox}}
 \newcommand\GL{{\rm GL}}  \newcommand\Hom{{\rm Hom}}
   \newcommand\mld{{\rm mld}}
\newcommand\rank{{\rm rank\,}} 
 \newcommand\Spec{{\rm Spec}}

\begin{document}

\title[Remarks on nef and movable cones]{Remarks on nef and movable cones of hypersurfaces in Mori dream spaces}

\author[Long\ \ Wang]{Long\ \ Wang}
\address{Graduate School of Mathematical Sciences, the University of Tokyo, 3-8-1 Komaba, Meguro-Ku, Tokyo 153-8914, Japan}
\email{wangl11@ms.u-tokyo.ac.jp}

\begin{abstract} We investigate nef and movable cones of hypersurfaces in Mori dream spaces. The first result is: Let $Z$ be a smooth Mori dream space of dimension at least four whose extremal contractions are of fiber type of relative dimension at least two and let $X$ be a smooth ample divisor in $Z$, then $X$ is a Mori dream space as well.

The second result is: Let $Z$ be a Fano manifold of dimension at least four whose extremal contractions are of fiber type and let $X$ be a smooth anti-canonical hypersurface in $Z$, which is a smooth Calabi--Yau variety, then the unique minimal model of $X$ up to isomorphism is $X$ itself, and moreover, the movable cone conjecture holds for $X$, namely, there exists a rational polyhedral cone which is a fundamental domain for the action of birational automorphisms on the effective movable cone of $X$. 
%$\mathrm{Bir}(X)$ on $\overline{\mathrm{Mov}}(X)\cap \mathrm{Eff}(X)$. 

The third result is: Let $P:= \PP^n \times \cdots \times \PP^n$ be the $N$-fold self-product of the $n$-dimensional projective space. Let $X$ be a general complete intersection of $n+1$ hypersurfaces of multidegree $(1, \dots, 1)$ in $P$ with $\dim X \geq 3$. Then $X$ has only finitely many minimal models up to isomorphism, and moreover, the movable cone conjecture holds for $X$.
\end{abstract}

%\date{\today}

%\subjclass[2010]{14E30, 14J70}	

%\keywords{Mori dream space, Lefschetz-type theorem, Nef cone}

\subjclass[2010]{14J32, 14E30, 14E07}	

\keywords{Mori dream space, Calabi--Yau variety, minimal model, Cone conjecture}

\maketitle

\thispagestyle{empty}

\setcounter{section}{0}

\section{Introduction}

The notion of a \textit{Mori dream space} (see Definition \ref{hk1.10}) was introduced by Hu and Keel in \cite{HK00}. The name comes from the fact that Mori dream spaces have very nice properties from the viewpoint of birational geometry, namely, the minimal model program can be carried out for any divisor. On the other hand, a Mori dream space is a GIT quotient of an affine variety by a torus in a natural way as Cox's construction of toric varieties as quotients of affine spaces (\cite{Co95}). Basic examples of Mori dream spaces include toric varieties and Fano varieties.

Being a Mori dream space is rather restrictive and is destroyed under various operations in general, for instance, taking ample divisors. In contrast, Jow showed the following result by using geometric invariant theory in \cite[Corollary 2]{Jo11} (see also \cite[Theorem 2.1 and Remark 2.4]{AL12}).

\medskip  \textit{Let $Z$ be a smooth projective variety of dimension $\geq 4$. Suppose $Z$ is a product of some Mori dream spaces, each having dimension $\geq 2$ and Picard number $1$. Then $Z$ is a Mori dream space, and every smooth ample divisor $X \subset Z$ is a Mori dream space such that the restriction map identifies $\Nef(Z)$ with $\Nef(X)$. }

\medskip In this paper, we first give a generalization via a different approach by using birational geometry, which is inspired by the argument of Koll\'{a}r in \cite{Ko91}.

\begin{theorem}\label{main} Let $Z$ be a smooth Mori dream space of dimension $\geq 4$ whose extremal contractions are of fiber type of relative dimension at least $2$. Let $i: X \hookrightarrow Z$ be a smooth ample divisor in $Z$. Then $X$ is a Mori dream space as well,  and moreover,
\[ \Eff(Z) = \Movo(Z) = \Nef(Z) = i^{\ast}(\Nef(X)) = i^{\ast}(\Movo(X)) = i^{\ast}(\Eff(X)). \]
\end{theorem}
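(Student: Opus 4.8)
The plan is to prove Theorem \ref{main} by combining the birational geometry of fiber-type extremal contractions with a Lefschetz-type comparison between $Z$ and the ample divisor $X$. The key structural input is the hypothesis that every extremal contraction of $Z$ is of fiber type with relative dimension at least $2$: this forces $Z$ to have no small contractions and no divisorial contractions, so the minimal model program on $Z$ does nothing, and one expects the movable, nef, and effective cones of $Z$ all to coincide. Indeed, a divisor class on the interior of $\Eff(Z)$ can only fail to be nef if some extremal ray is not contracted to a point, but fiber-type contractions contract curves only in the ``negative'' direction and there are no flipping or divisorial rays to cross; this should yield $\Eff(Z) = \Movo(Z) = \Nef(Z)$ directly.

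The heart of the argument is the identification of the cones of $X$ with those of $Z$ via the restriction map $i^{\ast}$, and here I would follow Koll\'ar's strategy from \cite{Ko91}. First I would establish a Lefschetz-type isomorphism $i^{\ast}: \Pic(Z) \xrightarrow{\sim} \Pic(X)$ (or at least $\NS(Z) \cong \NS(X)$), which holds because $X$ is an ample divisor of dimension $\geq 3$ in a smooth $Z$; this makes $i^{\ast}$ an isomorphism of the ambient Néron--Severi spaces so that the cones live in the same vector space. Next, and this is the crucial step, I would show that a curve class deformation argument forces every curve in $X$ to be numerically proportional to a curve coming from $Z$. The relative dimension at least $2$ hypothesis is exactly what makes the fibers of each contraction of $Z$ meet the ample divisor $X$ in a positive-dimensional (hence curve-containing) subvariety, so that extremal curves of $Z$ restrict to honest curves in $X$; this lets one compare the Mori cone $\NE(X)$ with $\NE(Z)$ and conclude $\Nef(X)$ maps onto $\Nef(Z)$ under $i^{\ast}$.

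With the nef cones matched, I would then run the chain of inclusions. One always has $i^{\ast}(\Nef(X)) \subseteq \Nef(Z)$ is not automatic, so I would argue both containments: a nef class on $Z$ restricts to a nef class on $X$ since ampleness/nefness is tested on curves and every curve of $X$ is controlled by curves of $Z$; conversely a nef class on $X$ is the restriction of a nef class on $Z$ by the surjectivity established above together with the fact that $\Nef(Z) = \Eff(Z)$ has nonempty interior. Sandwiching gives $i^{\ast}(\Nef(X)) = \Nef(Z) = \Eff(Z) = \Movo(Z)$, and the same curve-comparison shows $i^{\ast}(\Eff(X)) = \Eff(Z)$ and $i^{\ast}(\Movo(X)) = \Movo(Z)$; since these three cones of $Z$ already coincide, all six cones agree. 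Finally, to conclude $X$ is a Mori dream space, I would verify the Hu--Keel criterion of Definition \ref{hk1.10}: $X$ is a normal projective $\QQ$-factorial variety with finitely generated Picard group, $\Nef(X)$ is rational polyhedral and generated by semiample classes (inherited from $\Nef(Z)$, since $Z$ is a Mori dream space and $i^{\ast}$ identifies the nef cones), and $\Eff(X)$ admits the requisite finite chamber decomposition pulled back from that of $Z$.

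The main obstacle I anticipate is the curve-comparison step: proving that $\NE(X)$ is controlled by $\NE(Z)$, equivalently that $i^{\ast}$ carries $\Nef(X)$ onto $\Nef(Z)$ rather than into a strictly larger cone. The ample divisor $X$ could a priori contain extremal curves that do not deform to curves in the fibers of a contraction of $Z$, and ruling this out is exactly where the relative dimension $\geq 2$ and the fiber-type hypotheses must be used in an essential way, presumably through a deformation argument bounding the dimension of the Hilbert scheme of curves on $X$ against that on $Z$, in the spirit of \cite{Ko91}.
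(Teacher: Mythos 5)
Your overall strategy coincides with the paper's: the Lefschetz hyperplane theorem identifies $N^1(Z)_{\RR}$ with $N^1(X)_{\RR}$, and your key step is exactly the one in Proposition \ref{prop} --- for each extremal ray $R$ of $\NE(Z)$ with contraction $f\colon Z \to V$, a fiber $F$ with $\dim F \geq 2$ meets the ample divisor $X$ in a subvariety of dimension $\geq 1$, producing a curve $C \subset X$ contracted by $f$, so $R \subseteq i_{\ast}(\NE(X))$; since $Z$ is a Mori dream space, $\NE(Z)$ is rational polyhedral, hence spanned by its extremal rays, giving $i_{\ast}(\NE(X)) = \NE(Z)$ and, dually, $i^{\ast}(\Nef(Z)) = \Nef(X)$. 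Note, however, that the ``main obstacle'' you anticipate in your last paragraph is not an obstacle at all: the inclusion $i_{\ast}(\NE(X)) \subseteq \NE(Z)$ is automatic, since every curve in $X$ is a curve in $Z$, so no Hilbert-scheme or deformation-theoretic bound on curves in $X$ is ever needed. The role that deformation theory plays in \cite{Ko91} is here played entirely by the Mori dream space hypothesis, which supplies both the rational polyhedrality of $\NE(Z)$ and the existence of the contraction of each extremal ray; your second paragraph already contains the complete argument for the nef cones.

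The genuine gap is the sentence claiming that ``the same curve-comparison shows $i^{\ast}(\Eff(X)) = \Eff(Z)$ and $i^{\ast}(\Movo(X)) = \Movo(Z)$.'' Comparing $\NE(X)$ with $\NE(Z)$ dualizes to a statement about nef cones only; it says nothing about effective or movable divisors on $X$, and the needed inclusion $\Eff(X) \subseteq \Nef(X)$ does not follow from it. Nor can you argue as in your first paragraph via the Mori chamber decomposition of $\Eff(X)$ (Proposition \ref{hk1.11}), because that presupposes $X$ is a Mori dream space, which is precisely what is being proved --- verifying condition $(3)$ of Definition \ref{hk1.10} for $X$ is exactly what is at stake. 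The missing step, which is how the paper argues, is this: under the identification $\NE(X) \cong \NE(Z)$, the extremal contraction of $X$ corresponding to $R$ is the Stein factorization of $f|_X$, and it is of fiber type because every fiber of $f$ has dimension $\geq 2$, hence every fiber of $f|_X$ has dimension $\geq 1$. Consequently curves with class in $R$ pass through every point of $X$; if an effective divisor $D$ on $X$ had $D \cdot R < 0$, all such curves would lie in $\mathrm{Supp}(D)$, forcing $\mathrm{Supp}(D) = X$, a contradiction. This yields $\Eff(X) \subseteq \Nef(X)$, while the semi-ample generators of $\Nef(X)$ (restrictions of the semi-ample generators of $\Nef(Z)$) give $\Nef(X) \subseteq \Movo(X) \subseteq \Eff(X)$, so all the cones coincide and condition $(3)$ of Definition \ref{hk1.10} holds with the identity as the only small $\QQ$-factorial modification. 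With this step inserted (and the direction slips fixed --- $i^{\ast}$ maps classes on $Z$ to classes on $X$, not the other way), your proof matches the paper's.
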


Let $X$ be a smooth ample divisor in a Mori dream space $Z$. There are various obstructions to $X$ being a Mori dream space (see Section \ref{prof}). One is that there exist nef divisors on $X$ that are not semi-ample, particularly the nef cones of $X$ and $Z$ do not coincide in this case. Another one is that the movable cone of $X$ fails to be rational polyhedral even if the nef cones of $X$ and $Z$ coincide --- typical examples are Calabi--Yau hypersurfaces in Fano manifolds.

Although a Calabi--Yau variety (see Definition \ref{caly}) is not necessarily a Mori dream space in contrast to Fano varieties, the cone conjecture due to Morrison and Kawamata (Conjecture \ref{coneconj}) and the generalized abundance conjecture (Conjecture \ref{abundconj}) predict that up to the action of (birational) automorphisms, a Calabi--Yau variety is a ``Mori dream space''. These two conjectures are widely open even in dimension three. In the rest of this paper, we concentrate on the former and our second result is the following.

\begin{theorem}\label{main2} Let $Z$ be a Fano manifold of dimension $\geq 4$ whose extremal contractions are of fiber type. Let $X \in |-K_Z|$ be a smooth member, which is a smooth Calabi--Yau variety. Then $X$ admits the unique minimal model itself up to isomorphism $($as abstract varieties$)$. Moreover, the movable cone conjecture holds for $X$, namely, there exists a rational polyhedral cone which is a fundamental domain for the action of $\Bir(X)$ on $\Move(X):= \Mov(X)\cap \Eff(X)$.
\end{theorem}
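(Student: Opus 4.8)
The plan is to prove Theorem \ref{main2} by combining the structural input coming from the ambient Fano manifold $Z$ with the deformation/Lefschetz-type techniques already used in Theorem \ref{main}, and then feeding the resulting cone equalities into the machinery of the Morrison--Kawamata cone conjecture. The first and most important step is to establish that the restriction map $i^{\ast}$ identifies the nef (equivalently, movable) cone of $Z$ with that of $X$, just as in Theorem \ref{main}. Since $X \in |-K_Z|$ is ample, the Lefschetz hyperplane theorem gives that $i^{\ast} \colon \NS(Z)_{\RR} \to \NS(X)_{\RR}$ is an isomorphism (using $\dim X \geq 3$), so the two Néron--Severi spaces are canonically identified and the relevant cones live in the same vector space. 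I would then argue, following the Kollár-style extension argument behind Theorem \ref{main}, that because every extremal contraction of $Z$ is of fiber type, the pseudoeffective, movable, and nef cones of $Z$ all coincide and agree with $i^{\ast}(\Nef(X))$ under restriction; the fiber-type hypothesis is exactly what prevents small modifications or divisorial contractions of $Z$ from producing extra faces.

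\medskip Granting that identification, the next step is to deduce that $X$ has a \emph{unique} minimal model up to isomorphism and that this minimal model is $X$ itself. Since $X$ is Calabi--Yau, $K_X$ is trivial, so $X$ is automatically a minimal model; the content is uniqueness. A minimal model of $X$ is obtained by running a minimal model program on a movable divisor, and the birational contractions correspond to the walls of the movable cone $\Move(X)$. Because $\Move(X) = \Nef(X)$ under the identification above (the movable and nef cones of $Z$ coincide and restrict to those of $X$), there are no interior walls, hence no nontrivial flops, and $X$ is its own unique marked minimal model. Concretely, I would invoke the standard dictionary (as in Kawamata's work on the cone conjecture) between the chamber decomposition of $\Move(X)$ into nef cones of marked minimal models and the set of minimal models, concluding that this decomposition is trivial.

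\medskip Finally, I turn to the movable cone conjecture proper: the existence of a rational polyhedral fundamental domain for the action of $\Bir(X)$ on $\Move(X)$. Here the key observation is that $\Move(X) = \Nef(X) = i^{\ast}(\Nef(Z))$ is itself \emph{rational polyhedral}, since $Z$ is a Mori dream space and hence has rational polyhedral nef cone. Thus the whole cone $\Move(X)$ can serve as the candidate fundamental domain, and the conjecture reduces to checking that the $\Bir(X)$-action is compatible --- that is, no nontrivial birational automorphism moves the interior of this cone, which follows from the absence of nontrivial flops established in the previous step, so that $\Bir(X)$ acts through automorphisms preserving $\Nef(X)$ and the cone is a (possibly trivial-quotient) fundamental domain. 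I expect the main obstacle to lie in the first step: making the Kollár-style extension argument go through for $-K_Z$ rather than a general ample divisor, since one must control the restriction of movable and pseudoeffective classes and verify that the fiber-type assumption on $Z$ genuinely forces $\Eff(Z) = \Mov(Z) = \Nef(Z)$; the delicate point is ruling out extra extremal rays on $X$ that do not come from $Z$, which is precisely where the relative-dimension and Lefschetz hypotheses must be used carefully.
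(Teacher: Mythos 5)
Your proposal breaks down at its central step. From $\Nef(X)=i^{\ast}\Nef(Z)$ and $\Eff(Z)=\Movo(Z)=\Nef(Z)$ you infer that $\Move(X)=\Nef(X)$, hence that $X$ has no interior walls, no nontrivial flops, and that $\Bir(X)$ acts through automorphisms preserving $\Nef(X)$. None of this follows, and all of it is false in general under the hypotheses of Theorem \ref{main2}: the Lefschetz-type identification concerns the nef cones only, and movability of a class on $X$ is not controlled by movability of the corresponding class on $Z$. The paper itself points this out, and its motivating examples refute your claims: Wehler-type Calabi--Yau complete intersections (Example \ref{compl}, the case of \cite{CO15}) are instances of Theorem \ref{main2} in which $\Move(X)\supsetneq\Nef(X)$, the movable cone is \emph{not} rational polyhedral, and $\Bir(X)$ is infinite while $\Aut(X)$ is finite (Proposition \ref{co3.1}). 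Consequently your last step --- taking all of $\Move(X)$ as the fundamental domain on the grounds that it is rational polyhedral --- also collapses. The source of the error is that Theorem \ref{main2} assumes only that the contractions of $Z$ are of fiber type, \emph{not} of relative dimension $\geq 2$ as in Theorem \ref{main}; contractions of relative dimension one genuinely produce small contractions on $X$, so the chamber structure of $\Move(X)$ is nontrivial.

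The paper's actual argument handles exactly this case. By \cite{Ka08} birational Calabi--Yau varieties are connected by flops up to isomorphism, and by \cite[Theorem 5.7]{Ka88} every small elementary contraction of $X$ arises from a codimension-one face of $\Nef(X)$ up to $\Aut(X)$; under the identification $\NE(X)\cong\NE(Z)$ (Theorem \ref{kover}) such a face corresponds to an extremal contraction $f:Z\to V$, and one studies the Stein factorization $g:X\to W$ of $f|_X$. If $\dim V\leq \dim Z-2$, the fibers of $f$ meet $X$ in positive dimension and $g$ is of fiber type, so no flop occurs. The remaining case is $\dim V=\dim Z-1$: by Ando's theorem $f$ is generically a conic bundle, a general fiber meets $X\in|-K_Z|$ in two points, so $f|_X$ is a ramified generically $2{:}1$ cover and $g$ may indeed be small. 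The key point you are missing is that the covering involution $\theta:X\dashrightarrow X$ is not biregular (the cover is ramified), is an isomorphism in codimension one (as $g$ is small), and, since $\rho(X/W)=1$, is precisely the flop of $g$. Thus every flop of $X$ returns $X$ itself as an abstract variety, which gives uniqueness of the minimal model \emph{despite} the existence of nontrivial flops. The movable cone conjecture is then deduced not from polyhedrality of $\Move(X)$ but from Propositions \ref{mz4.1} and \ref{techcoh}: termination of flops (all minimal models being smooth), the resulting decomposition of $\Move(X)$ into $\Bir(X)$-translates of the rational polyhedral cone $\Nef(X)$ (Proposition \ref{co3.1}), and Looijenga's criterion (Proposition \ref{looij}).
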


This covers a recent result from \cite{Ya21}, which is a generalization of previous works \cite{CO15, Og14, Og18, Sk17} (see Example \ref{compl}). One can also apply the above theorem to some new cases including Calabi--Yau hypersurfaces in rational homogeneous manifolds or products of Fano manifolds with Picard number one (Examples \ref{homog} and \ref{prod}). The proof is inspired by the argument of Oguiso in \cite[Section 6]{Og14} (see also \cite{CO15, Og18, Sk17, Ya21}). The key is to show the first claim that the only minimal model of $X$ up to isomorphism is $X$ itself. The second claim then follows from the first one together with some general results on the structure of movable cones presented in Section \ref{gener}, which might be of independent interest.

We note that non-trivial minimal models may appear for certain Calabi--Yau complete intersections in a self-product of projective spaces (see \cite{HT14, HT18}, also Remark \ref{htsec3}). The following is the third result of this paper.

\begin{theorem}\label{cipp} Let $P:= \PP^n \times \cdots \times \PP^n$ be the $N$-fold self-product of the $n$-dimensional projective space. Let $X$ be a general\footnote{See Assumptions \ref{assump} and \ref{assump2}} complete intersection of $n+1$ hypersurfaces of multidegree $(1, \dots, 1)$ in $P$ with $\dim X \geq 3$. Then $X$ has only finitely many minimal models up to isomorphism $($as abstract varieties$)$. Moreover, there exists a rational polyhedral cone which is a fundamental domain for the action of $\Bir(X)$ on $\Move(X)$.
\end{theorem}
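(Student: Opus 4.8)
The plan is to follow the same two-stage strategy used for Theorem \ref{main2}: first pin down the N\'eron--Severi lattice, the relevant cones and the explicit birational geometry of $X$, and then feed this into the general structural results of Section \ref{gener} to extract simultaneously the finiteness of minimal models and the movable cone conjecture. To begin, I would record that $X$ is a smooth Calabi--Yau variety. Writing $H_i$ for the pullback to $P$ of the hyperplane class on the $i$-th factor, each defining hypersurface lies in $|H_1 + \cdots + H_N|$, and since $K_P = -(n+1)(H_1 + \cdots + H_N)$, adjunction gives $K_X = \OO_X$. Because $X$ is a complete intersection of ample divisors in $P$ with $\dim X \geq 3$, the Grothendieck--Lefschetz theorem identifies $\Pic(X) = \NS(X)$ with $\Pic(P) = \ZZ^N$, so the restrictions $H_1|_X, \dots, H_N|_X$ form a basis; I keep calling them $H_i$ and set $H := H_1 + \cdots + H_N$ and $L_i := H - H_i$.

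The geometric heart is the analysis of the projections. For every $i$ the ``forget the $i$-th factor'' morphism $q_i \colon X \to (\PP^n)^{N-1}$ is birational onto a determinantal hypersurface $D_i$ of multidegree $(n+1, \dots, n+1)$ cut out by the vanishing of an $(n+1)\times(n+1)$ matrix of multilinear forms; by adjunction $D_i$ is again Calabi--Yau, with canonical singularities along its corank $\geq 2$ locus, and $L_i = q_i^{\ast}\OO(1,\dots,1)$ is big, nef and semiample. Thus the $q_i$ are small crepant contractions, their flopping walls lie on $\partial\Nef(X)$, and the various small resolutions of the $D_i$ together with their iterated flops furnish the non-trivial minimal models. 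The single-factor projections $p_i \colon X \to \PP^n$ give the remaining morphisms, of fibre type with boundary nef class $H_i$ when $N \geq 3$, and birational (coinciding with some $q_j$) in the border case $N = 2$. The several determinantal presentations of $X$, one per factor, supply birational self-maps of Cremona type generating a subgroup of $\Bir(X)$. I would use the general-position Assumptions \ref{assump} and \ref{assump2} throughout to guarantee that the $D_i$ are normal with only the expected determinantal singularities, that the small resolutions behave as predicted, and that no unexpected divisor class or automorphism appears.

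With this dictionary I would compute $\Nef(X)$, $\Move(X)$ and $\Eff(X)$. The claim is that the extremal contractions of $X$ are exhausted by the finitely many $p_i$ and $q_i$ together with the flopping contractions they produce, so that $\Nef(X)$ is rational polyhedral; the same holds for every minimal model, since each is a small resolution of one of the $D_i$-type Calabi--Yau varieties. Consequently $\Move(X)$ is tiled by the nef cones of the finitely many isomorphism types of minimal models, and $\Bir(X)$ acts on $\Move(X)$ permuting these chambers through the reflection/permutation-type group generated by the Cremona self-maps. Invoking the reduction of Section \ref{gener}, the existence of a rational polyhedral fundamental domain for the action of $\Bir(X)$ on $\Move(X)$ follows once one exhibits finitely many $\Bir(X)$-orbits of chambers, which is precisely equivalent to the finiteness of minimal models up to isomorphism asserted in the theorem.

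I expect the main obstacle to be exactly the determination of $\Bir(X)$ and the verification that its action on $\Move(X)$ admits a rational polyhedral fundamental domain --- i.e.\ the genuine content of the cone conjecture, most delicate when $\Bir(X)$ is infinite and $\Move(X)$ carries infinitely many chambers. Concretely, the difficulties are threefold: (i) proving that the chambers coming from the $D_i$ and their flops actually exhaust $\Move(X)$, so that it is $\Move(X)$ rather than some larger cone that gets tiled; (ii) showing that the subgroup generated by the Cremona-type self-maps has finite index in $\Bir(X)$, so that the orbit count is finite; and (iii) controlling the behaviour of $\Move(X)$ along both the rational and the irrational parts of $\partial\Eff(X)$. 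All three are where the general-position hypotheses and the structural results of Section \ref{gener} must do the real work, in the spirit of Oguiso's analysis in \cite[Section 6]{Og14} and the determinantal description of \cite{HT14, HT18}.
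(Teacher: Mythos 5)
Your overall architecture (Lefschetz identification of $\Pic(X)$, the determinantal contractions onto hypersurfaces of multidegree $(n+1,\dots,n+1)$, fiber-type projections, then the general machinery of Section \ref{gener}) matches the paper's, but there is a genuine gap at the center: you never identify what the flopped varieties actually are, and the three ``difficulties'' you list at the end are precisely the content that the paper's proof supplies and your argument leaves unresolved. A priori, flopping $X \to W_i$ and iterating could produce an infinite chain of new minimal models, possibly singular, with no control on their nef cones; saying that ``small resolutions of the $D_i$ together with their iterated flops furnish the minimal models'' does not bound anything. The paper closes this loop with Construction \ref{constr}: regard the coefficients $a_j^{m_1\cdots m_N}$ of the defining equations as a tensor $b^{m_0 m_1\cdots m_N}$ on an $(N+1)$-fold product $\PP^n_0 \times \PP^n_1 \times \cdots \times \PP^n_N$, and for each $\ell$ form the complete intersection $X_\ell$ of multidegree-$(1,\dots,1)$ hypersurfaces in the product omitting the $\ell$-th factor (so $X_0 = X$). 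The matrices defining $\alpha_i : X \to W_i$ and $\beta_i : X_i \to W_i$ are transposes of one another, so $X$ and $X_i$ contract onto the \emph{same} determinantal hypersurface $W_i$, and Lemmas \ref{alter} and \ref{flopx} show (under Assumption \ref{assump2}) that $\phi_i : X \dashrightarrow X_i$ is the flop of $\alpha_i$. Since each $X_\ell$ is again a smooth complete intersection of the same type, its nef cone is $\Cone(H_1,\dots,H_N)$ and all of its small elementary contractions arise from codimension-one faces of that cone (\cite[Theorem 5.7]{Ka88}); by the symmetry of the construction every flop stays inside the finite set $\{X_0,\dots,X_N\}$ (Corollary \ref{flops}), and Kawamata's theorem that flops connect birational Calabi--Yau varieties (\cite{Ka08}) then gives at most $N+1$ minimal models up to isomorphism (Corollary \ref{maincor1}). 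Smoothness of all these models is also what makes termination of flops available (Proposition \ref{mz4.1}), which is needed before one may assert that the nef chambers of marked minimal models tile $\Move(X)$ (Proposition \ref{ka2.32}); so your point (i) is a real obstruction, and it is resolved only by the explicit construction, not by general position alone.

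Your difficulty (ii) --- determining $\Bir(X)$ or proving that a Cremona-type subgroup has finite index --- is not needed, and signals a misreading of how Section \ref{gener} operates. Proposition \ref{techcoh} requires only two inputs: finitely many minimal models up to isomorphism, and for each model $X_i$ a rational polyhedral cone $\Pi_i$ with $\Aut(X_i)\ast\Pi_i = \Nefe(X_i)$. Here one simply takes $\Pi_i = \Nef(X_i)$, which equals $\Nefe(X_i) = \Nefp(X_i)$ because it is rational polyhedral and generated by semiample classes (Setup \ref{mysetup}). Looijenga's result (Proposition \ref{looij}) then produces the rational polyhedral fundamental domain for the full group $\Bir(X)$ with no computation of generators, orbit counts, or indices. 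Likewise your worry (iii) about the boundary of the effective cone is absorbed by the equalities $\Nefe(X_\ell) = \Nefp(X_\ell) = \Nef(X_\ell)$ together with Proposition \ref{lop2.15}; no separate analysis of rational versus irrational boundary points is required.
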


Our approach is inspired by \cite{Fr01, HT14, HT18, Li16, LW21}, which in fact traces back to Cayley's work on determinantal quartic surfaces \cite{Ca69} in the nineteenth century. We recommend to the interested reader \cite{FGGL13} for more details about the history. Given $X$ as above, we can explicitly construct its small elementary contractions and the associated flops (see Construction \ref{constr}). The finiteness of minimal models then follows from a result of Kawamata (\cite{Ka08}) and the symmetry of our construction. Once we can show the finiteness of minimal models, the existence of fundamental domain follows similarly.

After some preliminaries in Section \ref{prel}, we prove Theorem \ref{main} in Section \ref{prof}. Some general results towards the cone conjecture are presented in Section \ref{gener}. The proofs of Theorem \ref{main2} and Theorem \ref{cipp} are given in Section \ref{prof2} and Section \ref{prof3}, respectively. In Section \ref{spec}, we will present the construction of minimal models for a special case, which might help readers to understand the general construction in Section \ref{prof3}.

%\medskip

\section{Preliminaries}\label{prel}

We work over the field $\CC$ of complex numbers and refer to \cite{Ma02} for the knowledge of minimal model program.

Let $X$ be a normal $\QQ$-factorial projective variety. We denote by $N^1(X)$ the group of Cartier divisors modulo numerical equivalence, which is a finitely generated abelian group. Recall that an effective divisor is called \textit{movable} if its stable base locus has codimension at least $2$. The cones generated by nef divisors, effective divisors, and movable divisors in $N^1(X)_{\RR} $ are denoted by $\Nef(X)$, $\Eff(X)$, and $\Movo(X)$, respectively. Note that the last two cones are neither open nor closed in general. We denote by $\Mov(X)$ the closure and by $\Movox$ the interior of $\Movo(X)$. We define the \textit{effective nef cone} as
\[ \Nefe(X) := \Nef(X)\cap \Eff(X) \]
and the \textit{effective movable cone} as
\[ \Move(X) := \Mov(X)\cap \mathrm{Eff}(X). \]
Similarly, we denote by $N_1(X)$ the group of $1$-cycles modulo numerical equivalence, and by $\NE(X) \subset N_1(X)_{\RR}$ the closure of the cone of effective $1$-cycles, which is the dual of $\Nef(X)$ under the intersection pairing. A \textit{small $\QQ$-factorial modification} of $X$ is a birational map $X \dashrightarrow Y$ to another normal $\QQ$-factorial projective variety $Y$ which is isomorphic in codimension $1$.

\begin{definition}[{\cite[Definition 1.10]{HK00}}]\label{hk1.10} Let $X$ be a normal $\QQ$-factorial projective variety. We call $X$ a \textit{Mori dream space}, if the following three conditions are satisfied.

\medskip $(1)$ $\Pic(X)_{\QQ} = N^1(X)_{\QQ}$, or equivalently, $h^1(X, \OO_X) = 0$.

\medskip $(2)$ $\Nef(X)$ is generated by finitely many semi-ample divisors as a convex cone.

\medskip $(3)$ There are finitely many small $\QQ$-factorial modifications $f_i: X \dashrightarrow X_i$, $1 \leq i \leq n$, such that each $X_i$ satisfies $(1)$, $(2)$, and
\[ \Movo(X) = \bigcup^n_{i = 1}f^{\ast}_i(\Nef(X_i)). \]
\end{definition}

We refer to \cite{HK00, McK10, Ok16, Ca18} for more details about Mori dream spaces and related topics. A birational map $f: X \dashrightarrow Y$ between normal projective varieties is called \textit{contracting} if the inverse map $f^{-1}$ does not contract any divisors.

\begin{proposition}[{\cite[Proposition 1.11]{HK00}}]\label{hk1.11} Let $X$ be a Mori dream space. Then the following hold.

\medskip $(\mathrm{1})$ Minimal model program can be carried out for any divisor on $X$. That is, the necessary contractions and flips exist, any sequence terminates, and if at some point the divisor becomes nef then at that point it becomes semi-ample.

\medskip $(\mathrm{2})$ There are finitely many birational contractions $g_i : X \dashrightarrow Y_i$, with $Y_i$ a Mori dream space, such that
\[ \Eff(X) = \bigcup_i \C_i, \]
\[ \C_i = g_i^{\ast}(\Nef(Y_i)) + \RR_{\geq 0}\cdot E_{i_1} + \cdots + \RR_{\geq 0}\cdot E_{i_k}, \]
where $E_{i_1}, \dots, E_{i_k}$ are the prime divisors contracted by $g_i$. The cones $\C_i$ are called the Mori chambers of $X$.
\end{proposition}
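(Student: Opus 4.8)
The plan is to read both assertions off the three conditions of Definition \ref{hk1.10} by running the minimal model program and organizing the resulting birational models into chambers, arguing by induction on the Picard rank $\rho(X)$. Condition $(3)$ already supplies a chamber decomposition of the \emph{movable} cone by nef cones of small modifications, so the real work is to propagate this across divisorial contractions to the whole effective cone. For part $(1)$, given an effective divisor $D$ I would first produce a Zariski-type decomposition $D \equiv P + N$, with $N = \sum_j a_j E_j$ supported on the prime divisors in the codimension-one part of the stable base locus of $D$ and $[P] \in \Mov(X)$. If $N = 0$ then $[D] \in \Movo(X)$, so by condition $(3)$ some small $\QQ$-factorial modification $f_i \colon X \dashrightarrow X_i$, realized as a finite sequence of flips, carries $[D]$ into $f_i^{\ast}(\Nef(X_i))$; the pushforward of $D$ is then nef, hence semi-ample by condition $(2)$ applied to $X_i$. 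If $N \neq 0$, each $E_j$ spans a $D$-negative extremal ray, and contracting these by a divisorial contraction $g \colon X \dashrightarrow Y$ yields a $\QQ$-factorial $Y$ with $\rho(Y) < \rho(X)$ again satisfying Definition \ref{hk1.10}, on which the pushforward of $D$ is movable; the inductive hypothesis then concludes. Termination is automatic, since flips cross walls between the finitely many nef chambers inside $\Movo(X)$ and each divisorial step strictly lowers $\rho$.

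For part $(2)$, I would assemble the chambers from the two families of models produced above: the movable chambers $f_i^{\ast}(\Nef(X_i))$ coming from condition $(3)$, with no contracted divisor, and, for each birational contraction $g \colon X \dashrightarrow Y$ onto a Mori dream space contracting prime divisors $E_1, \dots, E_k$, the chamber $g^{\ast}(\Nef(Y)) + \RR_{\geq 0} E_1 + \cdots + \RR_{\geq 0} E_k$. That these cover $\Eff(X)$ is exactly part $(1)$, since the Zariski decomposition of a class pins down the model, hence the chamber, it comes from, while the targets $Y$ are Mori dream spaces by the same inductive argument. It remains to see there are only finitely many chambers; granting finitely many small modifications from condition $(3)$, this amounts to bounding the number of divisorial contractions, equivalently the number of \emph{rigid} prime divisors spanning extremal rays of $\Eff(X)$ outside $\Mov(X)$.

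The main obstacle is the geometric input underlying part $(1)$ together with the finiteness just mentioned: each $D$-negative divisorial contraction must exist, its target must be $\QQ$-factorial and again satisfy Definition \ref{hk1.10}, the fixed-part assignment $D \mapsto N$ must be piecewise linear over rational polyhedral pieces, and the rigid prime divisors must be finite in number. The cleanest way to secure all of this at once is to pass to the characterization of a Mori dream space as a geometric invariant theory quotient of $\Spec \Cox(X)$ by the N\'{e}ron--Severi torus, under which $\Eff(X)$ is identified with the cone of fractional linearizations and the sought decomposition becomes the variation-of-GIT chamber structure on the character space; finiteness of Mori chambers is then finiteness of GIT chambers, crossing a wall interior to $\Mov(X)$ is a flip, and crossing the boundary of $\Mov(X)$ is a divisorial contraction. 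I expect this GIT translation, rather than any individual step of the minimal model program, to be the technical heart of the argument.
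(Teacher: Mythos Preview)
The paper does not contain a proof of this proposition: it is quoted verbatim from \cite[Proposition~1.11]{HK00} and used as a black box, so there is no ``paper's own proof'' to compare your proposal against. Your sketch is a reasonable reconstruction of the standard Hu--Keel argument, and you correctly identify in your final paragraph that the clean route is the GIT characterization (Proposition~\ref{hk2.9} here), under which the Mori chamber decomposition becomes the VGIT fan on the space of linearizations; that is indeed how \cite{HK00} proceeds, rather than by the more hands-on inductive scheme you outline first.
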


For a Mori dream space $X$, an \textit{extremal ray} of $\NE(X)$ is a $1$-dimensional face of $\NE(X)$; a \textit{extremal contraction} $X$ is a contraction associated with some extremal face (i.e., a face of $\NE(X)$ spanned by extremal rays).

%We notice the following easy fact which is implied by Definition \ref{hk1.10} and Proposition \ref{hk1.11} immediately.

%\begin{lemma} Let $X$ be a Mori dream space. Then $\Nef(X) = \Mov(X) = \Eff(X)$ if and only if each extremal contraction of $X$ is of fiber type.
%\end{lemma}

\begin{definition} Let $X$ be a projective variety satisfying that $\Pic(X)_{\QQ} = N^1(X)_{\QQ}$. By a \textit{Cox ring} for $X$ we mean the ring
\[ \Cox(X, \LL) := \bigoplus_{\mm \in \ZZ^{\rho}}H^0(X, \LL^{\mm}), \]
where $\LL = (L_1, \dots, L_{\rho})$ are line bundles which form a basis of $\Pic(X)_{\QQ}$, $\mm = (m_1, \dots, m_{\rho})$, and $\LL^{\mm} = L_1^{\otimes m_1}\otimes \cdots\otimes L_{\rho}^{m_{\rho}}$.
\end{definition}

Although the definition of $\Cox(X, \LL)$ depends on a choice of basis $\LL$, whether or not it is finitely generated is independent of this choice. See \cite{LV09, ADHL14} for more details on Cox rings.

\begin{proposition}[{\cite[Proposition 2.9]{HK00}}]\label{hk2.9} Let $X$ be a normal $\QQ$-factorial projective variety satisfying $\Pic(X)_{\QQ} = N^1(X)_{\QQ}$. Then the following hold.

\medskip $(\mathrm{1})$ $X$ is a Mori dream space if and only if $\Cox(X, \LL)$ is finitely generated for some $\LL$.

\medskip $(\mathrm{2})$ If $X$ is a Mori dream space, then $X$ is a GIT quotient of $V = \Spec (\Cox (X))$ by the torus $G = \Hom (\NN^{\rho}, \GG_m)$ where $\rho$ is the Picard number of $X$.
\end{proposition}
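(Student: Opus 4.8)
The plan is to treat the Cox ring $R := \Cox(X,\LL)$ as the total coordinate ring of $X$ and to extract the birational geometry of $X$ from the variation of GIT quotients of $V := \Spec R$ under the torus $G$, whose character lattice is $\ZZ^{\rho}\cong \Pic(X)\cong N^1(X)$. The $\ZZ^{\rho}$-grading on $R$ indexed by $\mm$ equips $V$ with a natural $G$-action, and each $\QQ$-line bundle determines a $G$-linearization of the structure sheaf on $V$, hence a GIT quotient $V /\!\!/_{L} G$. I would first dispose of Part $(2)$ together with the reconstruction half of Part $(1)$: for $L$ ample,
\[ V /\!\!/_{L} G = \Proj \bigoplus_{m \geq 0} R_{mL} = \Proj \bigoplus_{m \geq 0} H^0(X, mL) = X, \]
the last equality holding because $L$ is ample. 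This simultaneously proves $(2)$ and shows that finite generation of $R$ recovers $X$ as the GIT quotient attached to an ample linearization.

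For the forward implication of $(1)$ — that a Mori dream space has finitely generated Cox ring — I would exploit the Mori chamber decomposition of Proposition \ref{hk1.11}. Write $\Eff(X) = \bigcup_i \C_i$ with $\C_i = g_i^{\ast}(\Nef(Y_i)) + \sum_j \RR_{\geq 0} E_{i_j}$, finitely many chambers. The multisection ring attached to the finitely many semi-ample generators of $\Nef(Y_i)$ is finitely generated, since a semi-ample divisor induces a morphism to projective space whose associated section ring is a pullback of a polynomial ring and hence Noetherian. Twisting by the fixed exceptional divisors $E_{i_j}$ and summing the contributions over the finitely many chambers shows that the graded pieces $H^0(X,\LL^{\mm})$ are generated in finitely many multidegrees, so $R$ is finitely generated.

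For the reverse implication I would invoke the structure theory of variation of GIT of Thaddeus and Dolgachev--Hu: once $R$ is finitely generated, the cone of effective linearizations decomposes into finitely many rational polyhedral GIT chambers, and crossing a wall induces a birational map between the corresponding quotients that, in the relevant range, is an isomorphism in codimension one. The aim is then to match this chamber structure with the data of Definition \ref{hk1.10}. The chamber containing an ample $L$ has closure $\Nef(X)$, which is therefore rational polyhedral and generated by semi-ample classes, with the semi-ampleness supplied by the quotient morphisms (condition $(2)$). The chambers filling the interior of the movable cone yield the finitely many small $\QQ$-factorial modifications $f_i : X \dashrightarrow X_i := V /\!\!/_{\chi_i} G$ whose nef cones tile $\Movo(X)$ (condition $(3)$), while the walls toward the boundary of $\Eff(X)$ reproduce the birational contractions.

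The main obstacle, as always in this circle of ideas, is this last dictionary: verifying that each GIT wall-crossing inside the movable cone is a genuine small $\QQ$-factorial modification and that nef classes on $X$ are actually semi-ample rather than merely nef. This requires a careful analysis of the semistable loci $V^{ss}(\chi)$ and of the codimension of the unstable locus, together with the hypotheses of $\QQ$-factoriality and $\Pic(X)_{\QQ} = N^1(X)_{\QQ}$ to guarantee that the quotients $V /\!\!/_{\chi} G$ are of the expected type; it is here that the full strength of Hu--Keel's GIT analysis is indispensable.
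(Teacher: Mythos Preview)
The paper does not prove this proposition at all: it is quoted verbatim from Hu--Keel \cite[Proposition 2.9]{HK00} as background, with no argument supplied. So there is no ``paper's own proof'' to compare your attempt against.

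That said, your sketch is a faithful outline of the original Hu--Keel argument itself --- reconstructing $X$ as $V /\!\!/_{L} G$ via $\Proj$ of the ample section ring, deducing finite generation of $\Cox(X)$ from the Mori chamber decomposition, and conversely reading off the small $\QQ$-factorial modifications from the VGIT chamber structure of $V$. The places you flag as needing care (that wall-crossings inside the movable cone are small, that the quotients are $\QQ$-factorial, that nef classes coming from GIT walls are semi-ample) are exactly the points Hu--Keel have to work for; your forward-implication paragraph is also somewhat telegraphic, since ``summing the contributions over the finitely many chambers'' hides a genuine Zariski-type finite generation argument. But as a roadmap of the proof in \cite{HK00} your proposal is accurate.
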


The argument in \cite{Jo11} using GIT is based on Proposition \ref{hk2.9}.

\begin{example} A normal $\QQ$-factorial projective variety $X$ with Picard number $1$ is a Mori dream space if and only if $h^1(X, \OO_X) = 0$. Non-trivial examples of Mori dream spaces include projective toric manifolds (\cite[Corollary 2.4]{HK00}) and Fano manifolds (\cite[Corollary 1.3.2]{BCHM10}).

A K3 surface $X$ is a Mori dream space if and only if $\Eff(X)$ is rational polyhedral (\cite[Theorems 2.7]{AHL10}, \cite[Theorem 2.3]{Ot13}, \cite[Corollary 4.5]{McK10}). The latter is also equivalent to that the automorphism group $\Aut(X)$ of $X$ is finite. See \cite[Section 3]{Ca18} for more examples and non-examples.
\end{example}

Although Calabi--Yau varieties (see Definition \ref{caly} below) are not always Mori dream spaces, the cone conjecture due to Morrison and Kawamata together with the generalized abundance conjecture predict that a Calabi--Yau variety can be regarded as a ``Mori dream space'' modulo the action of (birational) automorphism groups. Before stating these two conjectures, we need a little more notions.

A \textit{minimal model} of $X$ is a normal projective $\QQ$-factorial variety $X^{\prime}$ with only terminal singularities and nef canonical divisor such that there is a birational map $\alpha: X^{\prime} \dashrightarrow X$. We call the pair $(X^{\prime}, \alpha)$ a \textit{marked minimal model} of $X$ with a \textit{marking} $\alpha$. Two minimal models of $X$ are said to be \textit{isomorphic}, if they are isomorphic as abstract varieties; while two marked minimal models $(X_i, \alpha_i)$ of $X$ are said to be \textit{isomorphic} if there exists an isomorphism $\beta : X_1 \to X_2$ such that $\alpha_1 = \alpha_2 \circ \beta$.

\begin{caution} For two marked minimal models $(X_i, \alpha_i)$ of $X$, even if $X_1$ and $X_2$ are isomorphic (as abstract varieties), $(X_1, \alpha_1)$ and $(X_2, \alpha_2)$ are not necessarily isomorphic (as marked minimal models).
\end{caution}

If $X$ itself is minimal, then $\alpha$ is an isomorphism in codimension $1$, and we obtain an isomorphism $\alpha_{\ast}: N^1(X^{\prime})_{\RR} \to N^1(X)_{\RR}$ preserving the movable cone and the effective movable cone. Let $\Aut(X)$ be the automorphism group and $\Bir(X)$ the birational automorphism group of $X$. Assume that $X$ is minimal, then there is a (contravariant) linear representation $\sigma: \Bir(X) \to \GL(N^1(X)_{\RR}, \RR)$ given by $g \mapsto g^{\ast}$.

\begin{definition}\label{caly} A normal projective variety $X$ with only $\QQ$-factorial terminal singularities is called \textit{Calabi--Yau}, if $h^1(X, \OO_X) = 0$ and $K_X$ is numerically trivial.
\end{definition}

Now we can state the cone conjecture due to Morrison \cite{Mo93, Mo96} and Kawamata \cite[Conjecture 1.12]{Ka97}.

\begin{conjecture}[Cone conjecture]\label{coneconj} Let $X$ be a Calabi--Yau variety. Then the following hold.

\medskip $\mathrm{(1)}$ The number of $\Aut(X)$-equivalence classes of faces of the cone $\Nefe(X)$ corresponding to birational contractions or fiber space structures is finite. Moreover, there exists a rational polyhedral cone $\Pi$ which is a fundamental domain for the action of $\Aut(X)$ on $\Nefe(X)$, in the sense that $\Nefe(X) =\bigcup_{g\in\Aut(X)}g^{\ast}\Pi$, and $\Pi^{\circ} \cap (g^{\ast}\Pi)^{\circ} = \varnothing$ unless $g^{\ast} = \mathrm{id}$.

\medskip $\mathrm{(2)}$ The number of the $\Bir(X)$-equivalence classes of chambers $\Nefe(X^{\prime})$ in the cone $\Move(X)$ for the marked minimal models $X^{\prime}$ is finite. In other words, the number of isomorphism classes of minimal models of $X$ is finite. Moreover, there exists a rational polyhedral cone which is a fundamental domain for the action of $\Bir(X)$ on $\Move(X)$.
\end{conjecture}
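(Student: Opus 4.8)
The plan is to exploit the multilinear, determinantal nature of $X$ in order to write down all of its elementary contractions and flops by hand, and then to feed this explicit picture into Kawamata's finiteness theorem together with the symmetry of the construction. First I would record the basic invariants. Since $X$ is a smooth complete intersection of ample divisors with $\dim X \geq 3$, the Grothendieck--Lefschetz theorem gives $\Pic(X) \cong \Pic(P) = \ZZ^N$, so $N^1(X)_{\RR} = \bigoplus_{j=1}^N \RR H_j$, where $H_j$ is the pullback of the hyperplane class from the $j$-th factor. As $-K_P = (n+1)\sum_j H_j$ and $X$ is cut out by $n+1$ divisors of class $\sum_j H_j$, adjunction yields $K_X = 0$, so $X$ is a smooth Calabi--Yau variety. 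Intersecting with the fiber curves of the factor projections shows that $\Nef(X) = \sum_j \RR_{\geq 0} H_j$ is the simplicial, hence rational polyhedral, cone spanned by the $H_j$.

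The heart of the argument is Construction \ref{constr}. For each $j$, I would consider the projection forgetting the $j$-th factor: writing the $n+1$ defining forms as linear forms in the coordinates of the $j$-th $\PP^n$ assembles an $(n+1)\times(n+1)$ matrix $M_j$ of multilinear forms on $Q_j := \prod_{k\neq j}\PP^n$, and exhibits $X$ over the degeneracy locus $D_j = \{\det M_j = 0\}$ as the associated kernel bundle. The locus $\{\rank M_j \leq n-1\}$ has expected codimension $4$ in $Q_j$ and, for general $X$, is nonempty precisely because $\dim Q_j = \dim X + 1 \geq 4$; a dimension count then shows that $p_j : X \to D_j$ is a small birational contraction corresponding to the facet $\sum_{k\neq j}\RR_{\geq 0}H_k$ of $\Nef(X)$. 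Since $K_X = 0$ this is a flopping contraction, and passing from the kernel bundle to the cokernel bundle produces the flop $X \dashrightarrow X_j$. The key point is that $X_j$ is again a general complete intersection of $n+1$ multilinear forms of the same multidegree, with the role of the $j$-th factor now played by the $(n+1)$-dimensional space indexing the forms. Thus the configuration carries an underlying symmetry exchanging the $N$ factors with the space of forms, and every elementary flop of every model has exactly this shape.

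With this in hand, finiteness of minimal models up to isomorphism follows. By Kawamata's theorem (\cite{Ka08}) any two minimal models of a Calabi--Yau variety are connected by a sequence of flops, so every minimal model of $X$ is reached by iterating the flops of Construction \ref{constr}; but each such flop merely moves the distinguished form space among the $N+1$ available copies of $\PP^n$, so up to isomorphism every model that can appear is one of the finitely many complete intersections attached to this symmetric datum. Hence $X$ has only finitely many minimal models up to isomorphism. The existence of a rational polyhedral fundamental domain for the action of $\Bir(X)$ on $\Move(X)$ then follows formally, by combining this finiteness --- equivalently, the finiteness of the $\Bir(X)$-orbits of the chambers $\alpha_{\ast}\Nef(X')$ that tile $\Move(X)$ --- with the rational polyhedrality of each individual nef cone, through the general results on movable cones established in Section \ref{gener}.

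I expect the main obstacle to be the core of Construction \ref{constr}: proving that for general $X$ the contraction $p_j$ is genuinely small with its flop again of the same multilinear type, which requires controlling genericity (smoothness of $X$ and of all flopped models, and the expected dimension of the determinantal degeneracy loci) so that the symmetry can be propagated through arbitrary sequences of flops. A secondary subtlety is the bookkeeping of the identification of Néron--Severi spaces across flops: the same abstract model can reappear with infinitely many distinct markings, so that although there are only finitely many models up to isomorphism, both $\Bir(X)$ and the chamber decomposition of $\Move(X)$ may well be infinite.
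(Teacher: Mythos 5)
The statement you were asked to prove is Conjecture \ref{coneconj}, the Morrison--Kawamata cone conjecture for an \emph{arbitrary} Calabi--Yau variety $X$. The paper does not prove this statement anywhere: it is recorded as a conjecture, and the introduction stresses that it is widely open even in dimension three. Your proposal never engages with that general statement. From your first sentence you assume that $X$ is a general complete intersection of $n+1$ hypersurfaces of multidegree $(1,\dots,1)$ in a self-product $P$ of projective spaces --- which is the hypothesis of Theorem \ref{cipp}, not of Conjecture \ref{coneconj}. Everything you then use (the identification $\Nef(X)\cong\Nef(P)$ as a simplicial cone, the matrices of multilinear forms, the determinantal degeneracy loci, the flops $X\dashrightarrow X_j$ obtained by exchanging a factor with the space of forms, and the symmetry fed into Kawamata's theorem) is special to that determinantal family. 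None of this structure exists for a general Calabi--Yau variety, whose nef cone need not be rational polyhedral, which may admit no contraction at all, and which may have infinitely many marked minimal models. So the proposal is not a proof of the statement; it silently replaces an open problem by a very special instance of it.

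What you have sketched is, almost step for step, the paper's proof of Theorem \ref{cipp} in Section \ref{prof3}: Construction \ref{constr}, the smallness/flop analysis (Lemmas \ref{alter} and \ref{flopx}, Corollary \ref{flops}), then Kawamata's theorem \cite{Ka08} plus symmetry for finiteness of minimal models (Corollary \ref{maincor1}), and the general results of Section \ref{gener} (Propositions \ref{mz4.1} and \ref{techcoh}) for the fundamental domain (Corollary \ref{maincor2}). Within that restricted scope your outline is essentially sound, including the caveats about genericity (Assumptions \ref{assump} and \ref{assump2}) and about markings versus abstract isomorphism. But even granting the special hypotheses, you only address conclusions of type (2) of the conjecture (finiteness of minimal models and a fundamental domain for $\Bir(X)$ acting on $\Move(X)$); part (1), on the action of $\Aut(X)$ on $\Nefe(X)$, is never discussed --- in your setting it follows from the rational polyhedrality of $\Nef(X)$ via Lemma \ref{ratnef}, but that needs to be said. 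The essential gap, however, is the mismatch of generality: you have outlined a proof of a theorem about one family of Calabi--Yau varieties, not of the conjecture about all of them.
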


We refer to \cite{LOP18, To10} for a survey of this widely open conjecture. %The first statement of item (1) follows from the second statement on fundamental domains (see e.g., \cite{To10}). Indeed, each contraction of $X$ to a projective variety is given by some semi-ample divisor on $X$. The class of such a divisor in $N^1(X)_{\RR}$ lies in the effective nef cone. Moreover, two semi-ample divisors in the interior of the same face of some cone determine the same contraction of $X$, because they have degree zero on the same curves. On the other hand, the first statement of item (2) also follows from the second statement on fundamental domains. The proof is based on Shokurov's theory of geography of log models (\cite{SC11, KKL16}).
%\begin{theorem}[{\cite[Theorem 2.14]{CL14}}] Let $X$ be a Calabi--Yau variety. Assume that there exists a rational polyhedral cone which is a fundamental domain for the action of $\Bir(X)$ on $\Move(X)$, then the number of isomorphism classes of minimal models of $X$ is finite. \end{theorem}

The usual \textit{abundance conjecture} predicts that every effective nef divisor on a Calabi--Yau variety is semi-ample. The following is the so-called \textit{generalized abundance conjecture} for Calabi--Yau varieties (\cite[Question 2.6]{Og93}), which is open in dimension three as well; see  \cite{Og93, LOP18, LP20, LS20} for more discussions and recent progress. We note that every nef divisor on a Fano variety is semi-ample because of the basepoint-free theorem (see e.g., \cite[Theorem 6-2-1]{Ma02}).

\begin{conjecture}\label{abundconj} Every nef divisor on a Calabi--Yau variety is semi-ample.
\end{conjecture}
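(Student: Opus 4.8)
The plan is to fix a nef divisor $D$ on the Calabi--Yau variety $X$ of dimension $n$ and to prove semi-ampleness by induction on $n$, organised according to the numerical dimension $\nu(D) := \max\{\, k : D^{k}\cdot H^{n-k} > 0 \,\}$ for a fixed ample divisor $H$. The two extreme values of $\nu(D)$ are genuinely accessible and the whole difficulty is concentrated in the intermediate range, so I would first isolate the easy cases.

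If $\nu(D) = 0$, then $D\cdot H^{n-1} = 0$, so the nef class $D$ is numerically trivial; since $h^1(X, \OO_X) = 0$, the group $\Pic^0(X)$ is trivial, hence $D$ is a torsion class, some multiple of $D$ is linearly trivial, and $D$ is semi-ample. If $\nu(D) = n$, then $D$ is nef and big, and because $K_X \equiv 0$ the class $D - K_X \equiv D$ is nef and big; as $X$ has canonical (indeed terminal) singularities, the base-point-free theorem \cite[Theorem 6-2-1]{Ma02} applies to $D$ and yields semi-ampleness.

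For the intermediate range $0 < \nu(D) < n$ the strategy is to reduce the dimension. Granting the numerical abundance equality $\kappa(D) = \nu(D)$, the Iitaka fibration $f : X \dashrightarrow Y$ of $D$, with $\dim Y = \nu(D)$, presents $D$ up to the contribution of the base as pulled back from an ample class on $Y$; on a general fibre $F$ the restriction $D|_F$ is numerically trivial and $K_F \equiv 0$, so one wants an inductive semi-ampleness statement fibrewise and then descends to the total space. Since $K_X \equiv 0$, all the birational modifications involved are crepant and preserve both $K$-triviality and the semi-ampleness of $D$, so one is free to pass to a convenient birational model. To make the induction run one has to phrase it in the relative, $K$-trivial setting rather than for strict Calabi--Yau varieties alone, because the fibres $F$ need not satisfy $h^1(F, \OO_F) = 0$.

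The principal obstacle is the nonvanishing input behind $\kappa(D) = \nu(D)$: exhibiting even one nontrivial section of some multiple $mD$ when $D$ is not big. This is the heart of the generalized abundance conjecture and is open already in dimension three, so the reduction above is conditional. The relevant machinery --- Nakayama's divisorial Zariski decomposition together with the abundance and nonvanishing criteria of \cite{LP20, LOP18} --- constrains the problem without settling it unconditionally. For the explicit Calabi--Yau varieties constructed in this paper, however, one can bypass the general conjecture: the contractions attached to the faces of the nef cone are produced geometrically, as in Construction \ref{constr} for the determinantal complete intersections, so every nef divisor defines an honest morphism and is therefore semi-ample, which suffices for the families treated here.
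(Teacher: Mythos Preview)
The statement you are attempting to prove is Conjecture \ref{abundconj} in the paper, not a theorem: the paper explicitly records it as the \emph{generalized abundance conjecture} and notes that it ``is open in dimension three as well''. There is therefore no proof in the paper to compare your proposal against. Your write-up itself concedes this --- after handling the two extreme values of the numerical dimension you say that the intermediate range is ``conditional'' on nonvanishing, which is precisely the open heart of the conjecture. So what you have written is not a proof but a standard reduction scheme (numerical dimension $0$ via $h^1(\OO_X)=0$, maximal numerical dimension via the base-point-free theorem, and an Iitaka-fibration induction in between) together with an honest acknowledgement that the key input is missing.

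Two further remarks on the details. First, your claim that ``for the explicit Calabi--Yau varieties constructed in this paper \dots\ every nef divisor defines an honest morphism'' is true for the $X$ in Theorems \ref{main2} and \ref{cipp}, but it follows not from Construction \ref{constr} alone: the paper obtains semi-ampleness on those $X$ from the identification $\Nef(X)\cong\Nef(Z)$ (Theorem \ref{kover}, Proposition \ref{co3.1}) together with the fact that $Z$ is a Mori dream space, so that the generators of $\Nef(X)$ are pulled back from semi-ample classes on $Z$. Second, your inductive step would need to be run for $K$-trivial klt pairs or fibrations rather than for Calabi--Yau varieties in the sense of Definition \ref{caly}, since general fibres of the Iitaka map need not have vanishing $h^1(\OO_F)$; you note this, but it means the induction hypothesis is strictly stronger than the statement being proved, and that stronger statement is likewise open.
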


We remark the following Lefschetz type result for the nef cone  (\cite{Ko91}, \cite[Theorem 4.3]{HLW02}, \cite[Proposition 3.5]{BI09}).

\begin{theorem}\label{kover} Let $i: D \hookrightarrow Z$ be a smooth ample divisor in a smooth projective variety $Z$ with $\dim Z \geq 4$. Then $i_{\ast}(\NE(D))_{K_D\leq 0} = \NE(Z)_{K_D\leq 0}$. If in addition, $-(K_Z + D)$ is nef, then $Z$ is a Fano manifold and $i_{\ast}(\NE(D)) = \NE(Z)$, or dually, $i^{\ast}(\Nef(Z)) = \Nef(D)$.
\end{theorem}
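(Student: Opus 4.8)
The plan is to use the Lefschetz hyperplane theorem to identify the relevant numerical spaces of $D$ and $Z$, and then to analyse the $(K_Z+D)$-negative extremal rays of $\NE(Z)$ through their extremal contractions, showing each is represented by a curve lying on $D$. Throughout I identify the functional $K_D$ on $N_1(Z)$ with $K_Z+D$ via adjunction $K_D = (K_Z+D)|_D$ and the projection formula $K_D\cdot\alpha = (K_Z+D)\cdot i_*\alpha$, so that the subscript $(-)_{K_D\leq 0}$ on $\NE(Z)$ means intersection with the closed half-space $\{(K_Z+D)\leq 0\}$.

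First I would record the Lefschetz input. Since $\dim D = \dim Z - 1 \geq 3$, the Lefschetz hyperplane theorem gives that $i_*\colon N_1(D)_{\RR}\to N_1(Z)_{\RR}$ is an isomorphism, with transpose $i^*\colon N^1(Z)_{\RR}\to N^1(D)_{\RR}$ an isomorphism as well. Under this identification the inclusion $\NE(D)\subseteq \NE(Z)$ is immediate, since effective curves on $D$ are effective on $Z$, and the projection formula shows that $i_*$ carries $\NE(D)_{K_D\leq 0}$ into $\NE(Z)_{K_D\leq 0}$; this is the easy inclusion. The content lies in the reverse inclusion, which by the isomorphism $i_*$ reduces to the single assertion that every class in $\NE(Z)_{K_D\leq 0}$ lies in $i_*\NE(D)$, and for this it suffices to treat the extremal rays generating the negative part of the cone.

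The key step, and the main obstacle, is the claim that every $(K_Z+D)$-negative extremal ray $R$ of $\NE(Z)$ is spanned by the class of a rational curve contained in $D$. To prove it I would first note that $R$ is $K_Z$-negative, because $-K_Z = -(K_Z+D) + D$ and $D$ is ample, so $R$ admits an extremal contraction $\phi_R\colon Z\to W$; moreover if $C$ is a minimal rational curve spanning $R$ then $-K_Z\cdot C = -(K_Z+D)\cdot C + D\cdot C \geq 2$, so the length $\ell(R)\geq 2$. I would then run a bend-and-break / Ionescu--Wi\'{s}niewski dimension estimate on the fibres of $\phi_R$ to force a nontrivial fibre $F$ to have $\dim F \geq 2$; as $D$ is ample, $D\cap F$ is a nonempty effective ample divisor on $F$ of dimension $\geq 1$, hence contains a curve $C'\subseteq D$ which is contracted by $\phi_R$ and therefore spans $R$, placing $R$ inside $i_*\NE(D)$. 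The delicate point is exactly this fibre-dimension bound: divisorial and small contractions follow at once from $\ell(R)\geq 2$, but the fibre-type case is where the geometry of $Z$ and the relative dimension of its contractions must be invoked to rule out one-dimensional fibres, on which the ample $D$ would meet $F$ only in points.

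Granting the claim, I would assemble the cone equality: by the cone theorem the $(K_Z+D)$-negative part of $\NE(Z)$ is locally finite and polyhedral and is generated by such extremal rays, each contained in the closed cone $i_*\NE(D)$, whence $\NE(Z)_{K_D\leq 0}\subseteq i_*\NE(D)$ after passing to closures. For the final statement, if $-(K_Z+D)$ is nef then $(K_Z+D)\cdot\gamma\leq 0$ for every $\gamma\in\NE(Z)$, so $\NE(Z) = \NE(Z)_{K_D\leq 0} = i_*\NE(D)$; and $-K_Z = -(K_Z+D)+D$ is a sum of a nef and an ample class, hence ample, so $Z$ is a Fano manifold. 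Finally, dualizing the equality $i_*\NE(D)=\NE(Z)$ through the intersection pairing, for which $i^*$ is the transpose of $i_*$, yields $i^*(\Nef(Z)) = \Nef(D)$, as required.
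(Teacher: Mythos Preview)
The paper does not prove this theorem; it is quoted as a known Lefschetz-type result with references to \cite{Ko91}, \cite[Theorem~4.3]{HLW02} and \cite[Proposition~3.5]{BI09}, so there is no in-paper argument to compare against directly. Your overall strategy---Lefschetz identification of $N_1$, cone theorem, the length bound $\ell(R)\geq 2$, and the Ionescu--Wi\'{s}niewski inequality to force a fibre of dimension $\geq 2$---is the standard one and matches both the cited literature and the paper's own argument for Proposition~\ref{prop}.

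There is, however, a genuine gap exactly where you flag the ``delicate point.'' You write that in the fibre-type case one must ``invoke the geometry of $Z$'' to ``rule out one-dimensional fibres,'' but no such general obstruction exists: contractions with one-dimensional general fibre do occur (the paper itself points to $\PP^1\times\PP^n$ in the remark after Proposition~\ref{prop}), and without the $(K_Z+D)$-negativity the conclusion is simply false. The missing argument is this. If $\phi_R\colon Z\to W$ is of fibre type with one-dimensional general fibre $F$, then $-K_Z\cdot F=2$, so $(K_Z+D)\cdot F<0$ forces $D\cdot F=1$. If some fibre jumps to dimension $\geq 2$ you finish as in the birational case; otherwise all fibres are one-dimensional, and since a reducible or non-reduced fibre would have $D$-degree $\geq 2$, every fibre is an integral curve meeting $D$ in exactly one point. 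Then $\phi_R|_D\colon D\to W$ is finite and birational onto the normal variety $W$, hence an isomorphism, giving $\rho(W)=\rho(D)=\rho(Z)$ by Lefschetz---contradicting $\rho(Z/W)=1$. So a higher-dimensional fibre must exist after all, and $R\subset i_{\ast}\NE(D)$. With this step supplied, the remainder of your argument, in particular the deduction when $-(K_Z+D)$ is nef (where $\NE(Z)$ is polyhedral and generated by its extremal rays), goes through.
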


\begin{proposition}[see e.g., {\cite[Theorem 3.1]{CO15}}]\label{co3.1} Let $Z$ be a Fano manifold of dimension $\geq 4$ and $X$ a smooth member of the linear system $|-K_{Z}|$. Let $i: X \hookrightarrow Z$ be the natural inclusion. Then $X$ is a smooth Calabi--Yau manifold in the strict sense, namely, $\omega_X \cong \OO_X$, $\pi_1(X) = \{1\}$, and $h^q(\OO_X) = 0$ for $0 < q < \dim X$. The nef cone $\Nef(X) = i^{\ast}(\Nef(Z))$ is rational polyhedral. Moreover, the automorphism group $\Aut(X)$ is finite.
\end{proposition}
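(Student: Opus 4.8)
The plan is to establish the four assertions in turn, drawing on adjunction, Kodaira vanishing together with Serre duality, the Lefschetz hyperplane theorem, and the Lefschetz-type Theorem \ref{kover}. Since $X \in |-K_Z|$ we have $X \sim -K_Z$, so adjunction gives $\omega_X \cong (\omega_Z \otimes \OO_Z(X))|_X \cong \OO_X$. For the intermediate cohomology I would use the ideal sheaf sequence of $X$, which reads $0 \to \omega_Z \to \OO_Z \to \OO_X \to 0$ because $\OO_Z(-X) \cong \omega_Z$, and pass to the long exact sequence. Writing $n = \dim Z$, Kodaira vanishing on the Fano manifold $Z$ gives $H^q(\OO_Z) = 0$ for $q > 0$, while Serre duality gives $H^{q+1}(\omega_Z) \cong H^{n-q-1}(\OO_Z)^{\vee} = 0$ for $0 < q < n-1$; reading off the sequence yields $h^q(\OO_X) = 0$ for $0 < q < n - 1 = \dim X$. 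Finally, since $X$ is a smooth ample divisor with $\dim X = n - 1 \geq 3$, the Lefschetz hyperplane theorem gives $\pi_1(X) \cong \pi_1(Z) = \{1\}$, the vanishing of $\pi_1(Z)$ being a consequence of the rational connectedness of Fano manifolds. This establishes the strict Calabi--Yau property.

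For the nef cone, the decisive point is that $K_Z + X \sim 0$, so $-(K_Z + X)$ is (trivially) nef; Theorem \ref{kover} then applies verbatim and yields $i^{\ast}(\Nef(Z)) = \Nef(X)$. As $Z$ is Fano, the cone theorem makes $\NE(Z)$, and dually $\Nef(Z)$, rational polyhedral. Since $i^{\ast}$ is $\QQ$-linear and the image of a rational polyhedral cone under such a map is again rational polyhedral, $\Nef(X) = i^{\ast}(\Nef(Z))$ is rational polyhedral. (In fact $i^{\ast} \colon N^1(Z)_{\RR} \to N^1(X)_{\RR}$ is an isomorphism: the Lefschetz theorem makes $i^{\ast}$ an isomorphism on $H^2$ for $n \geq 4$, and $h^2(\OO_Z) = h^2(\OO_X) = 0$ forces both second cohomologies to be of type $(1,1)$, so the N\'eron--Severi groups have full rank.)

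The substantive point is the finiteness of $\Aut(X)$, which I would obtain by controlling separately the image and the kernel of the natural action $\sigma$ of $\Aut(X)$ on $N^1(X)$. The image preserves the integral structure of $N^1(X)$ together with the strongly convex, full-dimensional, rational polyhedral cone $\Nef(X)$; any such group permutes the finitely many primitive integral generators of the extremal rays, and since these span $N^1(X)_{\RR}$ the group injects into a finite symmetric group, so $\sigma(\Aut(X))$ is finite. For the kernel I would first check $\Aut^0(X) = \{1\}$: from $\omega_X \cong \OO_X$ one gets $T_X \cong \Omega_X^{\dim X - 1}$, hence $H^0(X, T_X) \cong H^{\dim X - 1, 0}(X)$, whose dimension equals $h^{\dim X - 1}(\OO_X) = 0$ by the vanishing established above (as $0 < \dim X - 1 < \dim X$). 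Thus $\Aut(X)$ has trivial identity component. Now every element of $\ker \sigma$ fixes a chosen ample class $H$, so $\ker \sigma$ lies in the polarized automorphism group $\Aut(X, H)$, which is a group scheme of finite type whose identity component is contained in $\Aut^0(X) = \{1\}$, hence finite. Therefore $\Aut(X) = \ker \sigma \cdot \sigma(\Aut(X))$ is finite.

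The only step requiring genuine input beyond bookkeeping is this last one: the finiteness of $\Aut(X)$ rests on combining the rational polyhedrality of $\Nef(X)$ from the second paragraph (which bounds the image of $\sigma$) with the triviality of $\Aut^0(X)$ (which bounds its kernel), and the latter is exactly where the strict Calabi--Yau vanishing $h^{\dim X - 1}(\OO_X) = 0$ from the first paragraph is needed. Everything else reduces to adjunction and standard vanishing, so I expect no serious obstacle once these three ingredients are assembled.
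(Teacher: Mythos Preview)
Your argument is correct. Note, however, that the paper does not give its own proof of this proposition: it is stated with the attribution ``see e.g., \cite[Theorem 3.1]{CO15}'' and used thereafter as a black box, so there is no proof in the paper to compare against. What you have written is the standard argument one would expect---adjunction and Kodaira vanishing for the strict Calabi--Yau conditions, the Lefschetz hyperplane theorem for $\pi_1$ and for the identification $N^1(Z)_{\RR} \cong N^1(X)_{\RR}$, Theorem~\ref{kover} for $\Nef(X) = i^{\ast}\Nef(Z)$, and the image/kernel analysis of the representation on $N^1(X)$ for finiteness of $\Aut(X)$---and each step is handled cleanly.
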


%\medskip

\section{Proof of Theorem \ref{main}}\label{prof}

Let us start with a Lefschetz type theorem for nef cones whose proof is inspired by the argument in \cite{Ko91}.

\begin{proposition}\label{prop} Let $Z$ be a smooth Mori dream space of dimension $\geq 4$. Assume that each contraction of $Z$ admits a fiber of dimension $\geq 2$. Let $i: X \hookrightarrow Z$ be a smooth ample divisor in $Z$. Then the restriction map $i^{\ast}: \Nef(Z) \xrightarrow{\sim} \Nef(X)$ is an isomorphism.
\end{proposition}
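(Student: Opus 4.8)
The plan is to deduce the statement from its dual formulation on curves, following the strategy of Koll\'ar. Since $X$ is a smooth ample divisor with $\dim X \geq 3$, the Grothendieck--Lefschetz theorem gives that the restriction $i^{\ast}: N^1(Z)_{\RR} \to N^1(X)_{\RR}$ is an isomorphism; dually, the pushforward of $1$-cycles $i_{\ast}: N_1(X)_{\RR} \to N_1(Z)_{\RR}$ is the adjoint of $i^{\ast}$ under the intersection pairing, so that $i^{\ast}D \cdot C = D \cdot i_{\ast}C$ for all $D \in N^1(Z)_{\RR}$ and $C \in N_1(X)_{\RR}$, and is itself an isomorphism. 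Because the restriction of a nef divisor to $X$ stays nef, we automatically have $i^{\ast}(\Nef(Z)) \subseteq \Nef(X)$ and $i^{\ast}$ is injective, so the entire content is the reverse inclusion. By the adjunction just displayed, this reverse inclusion is equivalent to the inclusion $\NE(Z) \subseteq i_{\ast}(\NE(X))$ of cones of curves: if $\eta = i^{\ast}D \in \Nef(X)$ and every $c \in \NE(Z)$ can be written $c = i_{\ast}c'$ with $c' \in \NE(X)$, then $D \cdot c = \eta \cdot c' \geq 0$, whence $D \in \Nef(Z)$.

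Thus I would reduce everything to proving $\NE(Z) = i_{\ast}(\NE(X))$, the inclusion $i_{\ast}(\NE(X)) \subseteq \NE(Z)$ being clear since the image of an effective curve is effective. As $Z$ is a Mori dream space, $\Nef(Z)$ is rational polyhedral and hence so is its dual $\NE(Z)$, which is generated by finitely many extremal rays $R_1, \dots, R_k$. It therefore suffices to check $R_j \subseteq i_{\ast}(\NE(X))$ for each $j$.

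Here the hypothesis enters decisively. Fix an extremal ray $R = R_j$ with associated extremal contraction $\phi_R : Z \to Y$, which by construction contracts precisely the curves whose numerical class lies in $R$. By assumption $\phi_R$ admits a fiber $F$ with $\dim F \geq 2$; replacing $F$ by an irreducible component, I may take $F$ to be a projective variety of dimension $\geq 2$. Since $X$ is ample, $X|_F$ is ample on $F$, so $X \cap F$ is a nonempty effective divisor on $F$ of pure dimension $\dim F - 1 \geq 1$; in particular it contains an irreducible curve $C \subseteq X \cap F \subseteq X$. Then $[C] \in \NE(X)$, and since $C \subseteq F$ is contracted by $\phi_R$ its class satisfies $i_{\ast}[C] \in R$. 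Moreover $i_{\ast}[C] \neq 0$ because $H \cdot C > 0$ for an ample class $H$ on $Z$, so $i_{\ast}[C]$ spans the ray $R$ and hence $R \subseteq i_{\ast}(\NE(X))$. Running this over all extremal rays and using convexity of $i_{\ast}(\NE(X))$ gives $\NE(Z) = \sum_j R_j \subseteq i_{\ast}(\NE(X))$, as required.

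The main obstacle is precisely the construction in this last step: producing, for each extremal ray, an honest curve lying on $X$ whose pushforward to $Z$ spans that ray. This is exactly where the fiber-dimension hypothesis is indispensable, since an ample $X$ meets a fiber of dimension $\geq 2$ in a subvariety of dimension at least one, whereas it could meet a fiber of dimension $\leq 1$ only in points or miss it entirely. One could alternatively capture the part of $\NE(Z)$ on which $K_Z + X$ is nonpositive by invoking Theorem \ref{kover}, but the fiber argument has the advantage of handling every extremal ray uniformly without any sign condition. The only remaining point that must be verified carefully is that each curve contained in a fiber of $\phi_R$ has numerical class on the ray $R$, which is built into the definition of the contraction of an extremal ray on a Mori dream space.
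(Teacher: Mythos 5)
Your proposal is correct and follows essentially the same route as the paper's proof: identify $N^1(Z)_{\RR}\cong N^1(X)_{\RR}$ via Lefschetz, dualize to cones of curves, use rational polyhedrality of $\NE(Z)$ to reduce to extremal rays, and for each ray use the fiber of dimension $\geq 2$ to intersect with the ample divisor $X$ and produce a contracted curve on $X$ spanning that ray. Your write-up is in fact slightly more detailed than the paper's (e.g., spelling out the adjunction $i^{\ast}D\cdot C = D\cdot i_{\ast}C$ and checking $i_{\ast}[C]\neq 0$), but the ideas coincide.
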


\begin{proof} %Since $X$ is an ample divisor in $Z$ and $\dim X \geq 3$, by the Lefschetz hyperplane theorem, we have $h^1(X, \OO_X) = 0$, and
Since $X$ is an ample divisor in $Z$, $h^1(X, \OO_X) = 0$, and since $\dim X \geq 3$, by the Lefschetz hyperplane theorem, we have $i^{\ast}: \Pic(Z) \xrightarrow{\sim} \Pic(X)$, which induces $i^{\ast}: N^1(Z)_{\RR} \xrightarrow{\sim} N^1(X)_{\RR}$, or dually, $i_{\ast}: N_1(X)_{\RR} \xrightarrow{\sim} N_1(Z)_{\RR}$.

For our purpose, it is more convenient to use the closed cone of effective $1$-cycles, which is the dual of the nef cone. The linear map $i_{\ast}$ induces an inclusion $i_{\ast}(\NE(X))\subset \NE(Z)$. Let us note that $\NE(Z)$ is rational polyhedral since $Z$ is a Mori dream space. Thus, to show that $i_{\ast}$ is surjective, it suffices to show that all extremal rays of $\NE(Z)$ are contained in the image of $i_{\ast}$.

Take an extremal ray $R$ of $\NE(Z)$ and let $f: Z \to V$ be the contraction associated with $R$. This contraction exists because $\Nef(Z)$ is generated by semi-ample divisors. Our aim is to  show that $R$ is in the image of $i_{\ast}$, which is enough to show that $f$ contracts some curve on $X$. By our assumption, there exists a fiber $F$ of the contraction $f$ satisfying that $\dim F \geq 2$. Since $X|_F$ is of dimension $\geq 1$, we can find a curve $C$ in $X|_F$. This curve $C$ is contracted by $f$, so the class $[C]$ is in the extremal ray $R$. Since $C \subset X$,  we conclude that $R$ is in the image of $i_{\ast}: \NE(X) \subset \NE(Z)$.

This show that $i_{\ast}: \NE(X)\xrightarrow{\sim} \NE(Z)$ is an isomorphism. By taking dual, we obtain the desired isomorphism $i^{\ast}: \Nef(Z) \xrightarrow{\sim} \Nef(X)$.
\end{proof}

\begin{remark}
The condition that each contraction of $Z$ admits a fiber of dimension $\geq 2$ is necessary. In fact, there are ample hypersurfaces in the blow-up of $\PP^4$ at two distinct points or in $\PP^1 \times \PP^3$ with strictly larger nef cone than the ambient varieties. We refer to \cite{HLW02, Sz03} for the former and \cite{HLW02, Ot15} for the latter (see also Example \ref{ottem} below).
\end{remark}

Let us turn to prove our first main result Theorem \ref{main}. Recall the assumption that $Z$ is a smooth Mori dream space of dimension $\geq 4$ whose extremal contractions are of fiber type of relative dimension at least $2$ and $i: X \hookrightarrow Z$ is a smooth ample divisor in $Z$.

\begin{proof}[Proof of Theorem \ref{main}] By our assumption and Proposition \ref{prop}, we see that $i^{\ast}: \Nef(Z) \xrightarrow{\sim} \Nef(X)$ is an isomorphism. This implies that $\Nef(X)$ is a rational polyhedral cone generated by semi-ample divisors, since $Z$ is a Mori dream space. Let $f: Z \to V$ be the  contraction of $Z$ associated with an arbitrary extremal ray. Then the Stein factorization of the restriction morphism $f|_X$ is exactly the  contraction of $X$ associated with this extremal ray under the identification $i_{\ast}(\NE(Z)) = \NE(X)$. Since each extremal contraction of $Z$ is of fiber type of relative dimension at least $2$, we conclude that $\Nef(Z) = \Movo(Z) = \Eff(Z)$, and that each extremal contraction of $X$ is of fiber type. The latter implies that $\Nef(X) = \Movo(X) = \Eff(X)$. Therefore, $X$ is a Mori dream space, and moreover, $\Eff(Z) = \Movo(Z) = \Nef(Z) = i^{\ast}(\Nef(X)) = i^{\ast}(\Movo(X)) = i^{\ast}(\Eff(X))$.
\end{proof}

The following lemma is well-known; see for instance, \cite[Proposition 8]{Jo11}. We give an alternative proof here.

\begin{lemma}\label{lem2} Let $Z$ be a smooth projective variety which is a product of some Mori dream spaces with Picard number $1$. Then $Z$ is a Mori dream space satisfying that $\Nef(Z) \cong \Movo(Z) \cong \Eff(Z)$.
\end{lemma}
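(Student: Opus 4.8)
The plan is to reduce everything to a single computation of the three cones for the product and then read off the Mori dream space axioms. Write $Z = Z_1 \times \cdots \times Z_r$ with each $Z_i$ a Mori dream space of Picard number $1$, let $A_i$ be an ample generator of $N^1(Z_i)_{\RR} \cong \RR$, and let $\pr_i \colon Z \to Z_i$ denote the projections. By the K\"unneth formula and condition $(1)$ of Definition \ref{hk1.10} applied to each factor, $h^1(Z, \OO_Z) = 0$; combined with the see-saw principle this identifies $N^1(Z)_{\RR}$ with $\bigoplus_i \pr_i^{\ast} N^1(Z_i)_{\RR} \cong \RR^r$, having basis the classes $\pr_i^{\ast} A_i$. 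I would then prove that, under this identification, all three cones coincide with the closed positive orthant $\RR_{\geq 0}^r = \sum_i \RR_{\geq 0}\, \pr_i^{\ast} A_i$, after which the asserted equalities become immediate. This keeps the argument cone-theoretic rather than routing through finite generation of the Cox ring.

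The crux is the computation $\Eff(Z) = \RR_{\geq 0}^r$. The inclusion $\RR_{\geq 0}^r \subseteq \Eff(Z)$ is immediate, since each $A_i$ is effective on $Z_i$ and pulls back to an effective divisor. For the reverse inclusion I would use a covering family of curves: for each $i$ let $C_i$ be a curve contained in a fibre of the partial projection $Z \to \prod_{j \neq i} Z_j$, that is, a curve lying in a copy of $Z_i$. Such curves sweep out a family dominating $Z$, so for any effective divisor $D'$ a general member $C_i$ is not contained in $\mathrm{Supp}(D')$, whence $D' \cdot C_i \geq 0$. Since $\pr_j^{\ast} A_j \cdot C_i = 0$ for $j \neq i$ while $\pr_i^{\ast} A_i \cdot C_i = A_i \cdot \pr_i(C_i) > 0$ by ampleness, testing a class $\sum_i a_i \pr_i^{\ast} A_i \in \Eff(Z)$ against each $C_i$ forces $a_i \geq 0$. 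This yields $\Eff(Z) \subseteq \RR_{\geq 0}^r$. I expect this moving-curve step to be the main obstacle, as it is the one place where the product structure enters in an essential, non-formal way.

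The other two cones are then sandwiched. Each $\pr_i^{\ast} A_i$ is nef, so $\RR_{\geq 0}^r \subseteq \Nef(Z)$; conversely $\Nef(Z) \subseteq \overline{\Eff(Z)} = \RR_{\geq 0}^r$, whence $\Nef(Z) = \RR_{\geq 0}^r$. Because $Z_i$ has Picard number $1$ and is a Mori dream space, $A_i$ is semi-ample, so $\pr_i^{\ast} A_i$ is semi-ample and in particular movable; convexity of $\Movo(Z)$ then gives $\RR_{\geq 0}^r \subseteq \Movo(Z)$, while $\Movo(Z) \subseteq \Eff(Z) = \RR_{\geq 0}^r$ by definition, so $\Movo(Z) = \RR_{\geq 0}^r$ as well. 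Finally I would verify the three conditions of Definition \ref{hk1.10}: condition $(1)$ was checked above; condition $(2)$ holds because $\Nef(Z)$ is generated by the finitely many semi-ample classes $\pr_i^{\ast} A_i$; and condition $(3)$ is trivial since $\Movo(Z) = \Nef(Z)$, so one takes the single small $\QQ$-factorial modification to be the identity. This exhibits $Z$ as a Mori dream space with $\Nef(Z) = \Movo(Z) = \Eff(Z)$, as claimed.
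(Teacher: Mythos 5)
Your proposal is correct, and its skeleton matches the paper's: K\"unneth gives $h^1(Z,\OO_Z)=0$, the Picard group of the product decomposes so that $N^1(Z)_{\RR}$ has basis the pullbacks $\pr_i^{\ast}A_i$, and then one shows the three cones coincide with the simplicial cone they span, after which the Mori dream space axioms are read off exactly as you do (conditions $(1)$, $(2)$, and $(3)$ with the identity as the only small $\QQ$-factorial modification). The difference lies in how the coincidence of the cones is justified. The paper is terser: it asserts that $\Nef(Z)$ is the rational polyhedral cone generated by the semi-ample divisors $H_i = \pr_i^{\ast}A_i$ and then concludes $\Nef(Z)=\Movo(Z)=\Eff(Z)$ from the fact that every extremal contraction of $Z$ is of fiber type (namely the partial projections), i.e., it invokes the same MMP-flavored principle that drives the proof of Theorem \ref{main}. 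You instead compute $\Eff(Z)$ directly, testing an effective class against a covering family of curves contained in fibers of the projections $Z \to \prod_{j\neq i}Z_j$, and then sandwich $\Nef(Z)$ and $\Movo(Z)$ between the positive orthant and $\Eff(Z)$. The two mechanisms are at bottom the same --- an effective divisor restricted to a general fiber of a projection must remain effective, forcing nonnegative coefficients --- but your version is more elementary and self-contained, avoiding any appeal to extremal contractions or Mori chambers; its one informal point is the covering-family step, which should be made precise by taking, e.g., complete-intersection curves of a very ample multiple of $A_i$ inside the fibers, so that the curves form a genuine dominating family with constant intersection numbers, a general member of which avoids being contained in the support of the given effective divisor. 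The paper's route buys brevity and uniformity with the rest of the paper; yours buys independence from the MMP machinery.
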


\begin{proof} Recall that for a smooth projective variety $W$ with Picard number $1$, $W$ is a Mori dream space if and only if $h^1(W, \OO_W) = 0$. We write $Z = W_1 \times \cdots \times W_k$, where $W_i$ are Mori dream spaces with Picard number $1$. Then $h^1(Z, \OO_Z) = 0$ by the K\"{u}nneth formula, and moreover,
\[ \Pic(Z) \cong \Pic(W_1) \times \cdots\times \Pic(W_k). \]
This implies that
\[ N^1(Z)_{\RR} \cong N^1(W_1)_{\RR} \times \cdots\times N^1(W_k)_{\RR}. \]
Let $H_i$ be the divisor in $Z$ which is the pullback of the ample generator of $\Pic(W_i)$. Then $\Nef(Z)$ is a rational polyhedral cone generated by these semi-ample divisors $H_i$. Since every extremal contraction of $Z$ is of fiber type, we see that $\Nef(Z) \cong \Movo(Z) \cong \Eff(Z)$, and $Z$ is a Mori dream space.
\end{proof}

\begin{corollary}\label{cor} Let $Z$ be a smooth projective variety which is a product of some Mori dream spaces, each having dimension $\geq n+1$ and Picard number $1$, where $n \geq 1$ is a positive integer. Let $X$ be a general complete intersection of $m \leq n$ smooth ample divisors in $Z$. Assume that $\dim X \geq 3$. Then $X$ is a Mori dream space and moreover,
\[ \Eff(Z) = \Movo(Z) = \Nef(Z) = i^{\ast}(\Nef(X)) = i^{\ast}(\Movo(X)) = i^{\ast}(\Eff(X)). \]
\end{corollary}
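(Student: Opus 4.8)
The plan is to deduce Corollary \ref{cor} from Theorem \ref{main} by induction on the number $m$ of hyperplane sections, using the fact that each intermediate complete intersection remains the kind of ambient space to which the theorem applies. First I would verify the base of the setup: by Lemma \ref{lem2}, the product $Z = W_1 \times \cdots \times W_k$ of Mori dream spaces of Picard number $1$ is itself a smooth Mori dream space with $\Nef(Z) = \Movo(Z) = \Eff(Z)$, and every extremal contraction of $Z$ is of fiber type. The crucial numerical point is that since each factor $W_i$ has dimension $\geq n+1$, the contraction projecting off one factor has relative dimension $\geq n+1 \geq 2$; more precisely, every extremal contraction of $Z$ is (up to the identification of $N^1$) a product projection contracting curves inside a single factor $W_i$, hence has relative dimension equal to $\dim W_i \geq n+1$. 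Thus $Z$ satisfies the hypotheses of Theorem \ref{main}.

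Next I would set up the induction. Write $X = D_1 \cap \cdots \cap D_m$ as a complete intersection of general ample divisors, and let $Z_j := D_1 \cap \cdots \cap D_j$ denote the partial intersections, so $Z_0 = Z$ and $Z_m = X$. For the inductive step from $Z_{j-1}$ to $Z_j$, I would check that $Z_{j-1}$ is a smooth Mori dream space of dimension $\dim Z - (j-1) \geq (n+1) k - (n-1) \geq 4$ (using $\dim X \geq 3$ and $m \leq n$), with all extremal contractions of fiber type of relative dimension $\geq 2$, so that Theorem \ref{main} applies with ambient $Z_{j-1}$ and ample divisor $Z_j = Z_{j-1} \cap D_j$. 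The theorem then yields that $Z_j$ is again a smooth Mori dream space with $\Eff(Z_{j-1}) = \Movo(Z_{j-1}) = \Nef(Z_{j-1}) = i_j^{\ast}(\Nef(Z_j)) = i_j^{\ast}(\Movo(Z_j)) = i_j^{\ast}(\Eff(Z_j))$, where $i_j : Z_j \hookrightarrow Z_{j-1}$ is the inclusion. Composing these identifications over $j = 1, \dots, m$ gives the displayed chain of equalities for $X$ inside $Z$, since the restriction $i^{\ast}$ factors as $i_m^{\ast} \circ \cdots \circ i_1^{\ast}$ and each map is an isomorphism of nef cones by Proposition \ref{prop}.

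The main obstacle is the bookkeeping needed to maintain the hypothesis that \emph{every} extremal contraction of each intermediate $Z_j$ is of fiber type of relative dimension $\geq 2$; this does not come for free from Theorem \ref{main} as literally stated, since the theorem concludes only that $X$ is a Mori dream space with equal cones, not that its contractions again have relative dimension $\geq 2$. To run the induction I would instead argue directly at the level of curves: by Proposition \ref{prop} (applied repeatedly) the pushforward $i_\ast$ identifies $\NE(Z_j) \cong \NE(Z)$, and under this identification the extremal contraction of $Z_j$ associated with an extremal ray $R$ is the Stein factorization of the restriction to $Z_j$ of the corresponding product projection of $Z$. A general fiber of that projection is (a general hyperplane section, repeated $j$ times, of) a factor $W_i$, which has dimension $\dim W_i - j \geq (n+1) - m \geq (n+1) - n = 1$; to guarantee relative dimension $\geq 2$ I would use the slightly sharper bound $\dim W_i - j \geq (n+1) - (m-1) \geq 2$ available at each intermediate stage $j \leq m-1$ (the final passage to $X = Z_m$ only requires the ambient $Z_{m-1}$ to satisfy the hypotheses). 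Checking this dimension count carefully at every stage, and confirming that generality of the $D_i$ ensures each $Z_j$ is smooth with the expected intersection-theoretic behaviour of fibers, is where the real work lies; once it is in place, the conclusion is immediate from the chain of cone identifications.
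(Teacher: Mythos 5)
Your proposal is correct and follows essentially the same route as the paper: induct on the partial complete intersections $Y_j = D_1 \cap \cdots \cap D_j$, apply Theorem \ref{main} at each stage, and track that the relative dimension of the fiber-type extremal contractions (which, as in the proof of Theorem \ref{main}, are Stein factorizations of restricted product projections) drops by at most one per cut, giving the same bound $n-m+2 \geq 2$ at the stage $Y_{m-1}$ so that the final application of Theorem \ref{main} goes through. The bookkeeping issue you flag is real but is resolved exactly as you suggest, and it is the same (implicit) resolution used in the paper.
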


\begin{proof}[Proof of Corollary \ref{cor}] Let $X$ be a general complete intersection of $m \leq n$ smooth ample divisors $D_1, \dots, D_m$ in $Z$. Let $Y_i$ be the smooth complete intersection of $D_1, \dots, D_i$, where $1 \leq i \leq m$. Then by Theorem \ref{main}, $Y_1 = D_1$ is a Mori dream space whose extremal contractions are of fiber type of relative dimension at least $n$. By induction on $m$, we see that $Y_{m-1}$ is a Mori dream space of dimension $\geq 4$ whose extremal contractions are of fiber type of relative dimension at least $n-m+2 \geq 2$. By Theorem \ref{main} again, we conclude that $X$ is a Mori dream space, and moreover, $\Eff(Z) = \Movo(Z) = \Nef(Z) = i^{\ast}(\Nef(X)) = i^{\ast}(\Movo(X)) = i^{\ast}(\Eff(X))$.
\end{proof}

The pairs of $Z$ and $X$ appearing in Theorem \ref{main} and Corollary \ref{cor} give examples of the so-called \textit{birational twins} in the following sense of Lozano Huerta and Massarenti. We refer to \cite{LHM21} for an introduction and \cite{LHM20, LMR20} for more details.

\begin{definition}[{\cite[Definitions 1.1 and 1.2]{LHM20}}]\label{biratwin} Let $W$ and $Y$ be Mori dream spaces  and let $i: W \hookrightarrow Y$ be an embedding. We say that $W$ and $Y$ are \textit{birational twins} if the pull-back $i^{\ast}: \Pic(Y) \to \Pic(W)$ induces an isomorphism such that
\[ i^{\ast}(\Nef(Y)) = \Nef(W), \ i^{\ast}(\Movo(Y)) = \Movo(W), \ i^{\ast}(\Eff(Y)) = \Eff(W), \]
and in addition, $i^{\ast}$ preserves the Mori chamber decompositions of $\Eff(Y)$ and $\Eff(W)$.
\end{definition}

\begin{example}[\cite{Ot15}]\label{ottem} Let $n \geq 3$ be a positive integer. A very general hypersurface $D \subset \PP^1 \times \PP^n$ of degree $(d, e)$ with $d \geq n + 1$ and $e \geq 2$ is not a Mori dream space. Indeed, $\Nef(\PP^1 \times \PP^n)$ is generated by the rays $H_i = p_i^{\ast}\OO(1)$, where $p_1$ and $p_2$ are the two projections. By \cite[Theorem 1.1 (iv)]{Ot15}, we know that $\Nef(D)$ is generated by the rays $H_1$ and $neH_2 - dH_1$, which is strictly larger that $\Nef(\PP^1 \times \PP^n)$, and moreover, the divisor $neH_2 - dH_1$ has no effective multiple, in particular, it is not semi-ample.
\end{example}

\begin{question} Let $X$ be a smooth ample divisor in a Mori dream space $Z$ with $\dim Z \geq4$. Is the nef cone $\Nef(X)$ of $X$ always rational polyhedral?
\end{question}

\begin{remark} Let $D \subset Y$ be a smooth ample divisor in a smooth projective variety $Y$ with $\dim Y \geq 4$. Granting that $\Nef(Y) \cong \Nef(D)$, $\Movo(D)$ is strictly larger than $\Movo(Y)$ in general. It can be even worse that $\Movo(D)$ is irrational while $Y$ is a Mori dream space, for instance, when $D$ is a certain Calabi--Yau complete intersection in a product of projective spaces \cite{CO15, HT18, Og14, Og18, Ot15, Ya21} (see also Section \ref{prof2}).
\end{remark}

%\medskip

\section{General results}\label{gener}

In this section, we prove some general results on the cone conjecture and expect that these results help to further studies. Let us start with a few notations. Let $V$ be a finite-dimensional $\RR$-vector space admitting a distinguished $\QQ$-structure, i.e., there is a $\QQ$-vector space $V(\QQ)$ such that $V = V(\QQ)\otimes_{\QQ}\RR$. Let $C \subset V$ be a convex cone, and denote by $C^{+}$ the smallest convex cone in $V$ containing all the $\QQ$-rational points of $\overline{C}$, in other words, the convex hull of $\overline{C} \cap V(\QQ)$. %Clearly $C^+ = \overline{C}^+$ so \[ \Ampp(X) = \Nefp(X)\subset \Movp (X) = \Movcp (X). \]
The following result due to Looijenga (\cite[Proposition 4.1 and Application 4.14]{Lo14}) is very useful to the study of the cone conjecture.

\begin{proposition}\label{looij} Let $N$ be a finitely generated free $\ZZ$-module, and $C$ be an open strictly convex cone in the $\RR$-vector space $N_{\RR}:= N\otimes \RR$. Let $\Gamma$ be a subgroup of $\GL(N)$ which preserves the cone $C$. Suppose that there is a rational polyhedral cone $\Pi \subset C^+$ such that $C \subset \Gamma\cdot\Pi$. Then $C^+ = \Gamma\cdot\Pi$, and there exists a rational polyhedral fundamental domain for the action of $\Gamma$ on $C^{+}$. %Moreover, the group $\Gamma$ is finitely presented.
\end{proposition}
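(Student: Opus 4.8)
The plan is to establish the identity $C^{+} = \Gamma\cdot\Pi$ first and then extract the fundamental domain, with everything resting on a single geometric input: the local finiteness of the family of translates $\{\gamma\Pi : \gamma\in\Gamma\}$.

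I would begin by recording the elementary containments. Write $N_{\QQ} := N\otimes_{\ZZ}\QQ$. Since $\Gamma\subset\GL(N)$ preserves $C$, it preserves $\overline{C}$ and the lattice $N$, hence the set of rational points $\overline{C}\cap N_{\QQ}$ and its convex hull $C^{+}$; as $\Pi\subset C^{+}$, this already gives $\Gamma\cdot\Pi\subset C^{+}$. In the other direction, $C^{+}\subset\overline{C}$ because $\overline{C}$ is a convex cone containing $\overline{C}\cap N_{\QQ}$, while $C\subset C^{+}$ because each point of the open cone $C$ lies in the interior of a rational simplex contained in $C$. Together with the hypothesis $C\subset\Gamma\cdot\Pi$ this yields the sandwich $C\subset\Gamma\cdot\Pi\subset C^{+}\subset\overline{C}$; in particular $\mathrm{int}(C^{+})=C$, so $\Gamma\cdot\Pi$ and $C^{+}$ can differ only along $\partial C$.

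For the reverse inclusion $C^{+}\subset\Gamma\cdot\Pi$ I would reduce to two statements: (a) every rational point $v\in\overline{C}\cap N_{\QQ}$ lies in $\Gamma\cdot\Pi$, and (b) $\Gamma\cdot\Pi$ is convex. Granting these, $C^{+}$—being the smallest convex cone on the generators $\overline{C}\cap N_{\QQ}$—is contained in the convex set $\Gamma\cdot\Pi$, and equality follows. For (a), fix an interior point $c\in C$; the half-open segment $[c,v)$ lies in $C$ because $C$ is open and convex, hence in $\Gamma\cdot\Pi=\bigcup_{\gamma}\gamma\Pi$. If only finitely many translates meet a neighborhood of $v$, then the tail of $[c,v)$ is swept out by finitely many closed cones, so at least one closed cone $\gamma\Pi$ contains points arbitrarily close to $v$ and therefore contains the limit $v$ itself; this is exactly where local finiteness at a rational boundary point is used.

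Everything thus reduces to the local finiteness of the tiling $\{\gamma\Pi\}$ and to the convexity (b); this is the technical heart of the matter and the step I expect to be the main obstacle. Both hypotheses are essential here: the discreteness of $\Gamma$ inside $\GL(N_{\RR})$ (it lies in $\GL(N)$) together with the strict convexity of $C$—so that $\overline{C}$ contains no line—forces the translates $\gamma\Pi$ to accumulate only toward $\partial C$ and not within the interior or at rational boundary points, giving both the local finiteness and, by an inductive argument along the polyhedral structure, the convexity of the union. Finally, granting local finiteness, the fundamental domain is obtained by a standard trimming: the set $S=\{\gamma\in\Gamma : \dim(\Pi\cap\gamma\Pi)=\dim C\}$ is finite, and subdividing $\Pi$ along the faces cut out by the translates $\{\gamma\Pi : \gamma\in S\}$ and retaining one representative chamber from each $\Gamma$-orbit produces a rational polyhedral fundamental domain for the action of $\Gamma$ on $C^{+}$.
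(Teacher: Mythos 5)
The paper itself offers no proof of Proposition~\ref{looij}: it is quoted verbatim from Looijenga \cite[Proposition 4.1 and Application 4.14]{Lo14}. So your sketch has to stand on its own, and it does not. The elementary part is fine (the sandwich $C \subset \Gamma\cdot\Pi \subset C^{+} \subset \overline{C}$, and the reduction of $C^{+}=\Gamma\cdot\Pi$ to (a) covering the rational boundary points and (b) convexity of the union). But all the content of the proposition is concentrated in the step you explicitly defer, and the mechanism you propose for it is false: discreteness of $\Gamma$ together with strict convexity of $C$ does \emph{not} make the family $\{\gamma\Pi\}$ locally finite at rational boundary points. Concretely, let $N$ be the rank-$3$ lattice of integral binary quadratic forms, $C$ the cone of positive definite real forms, $\Gamma = \mathrm{SL}_2(\ZZ)$ acting by change of variables, and $\Pi$ the rational polyhedral cone of Minkowski-reduced forms $\alpha x^2 + 2\beta xy + \gamma y^2$ with $0 \le 2\beta \le \alpha \le \gamma$. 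All hypotheses of the proposition hold (classical reduction theory gives $C \subset \Gamma\cdot\Pi$), yet the rational boundary ray spanned by the degenerate form $y^2$ lies in $\gamma_b\Pi$ for every shear $\gamma_b\colon (x,y)\mapsto (x+by,\,y)$, $b \in \ZZ$, and these translates are pairwise distinct. Hence \emph{infinitely} many translates meet every neighborhood of this rational boundary point. Such rational rays with infinite parabolic stabilizer (cusps) are exactly where the proposition is interesting, and your argument for (a) --- finitely many closed translates sweep the tail of $[c,v)$, so one of them contains $v$ --- collapses there.

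The same unproved (and at the boundary, false) local finiteness is what you invoke for the convexity of $\Gamma\cdot\Pi$, for the finiteness of $S=\{\gamma\in\Gamma : \dim(\Pi\cap\gamma\Pi)=\dim C\}$, and for the final trimming that is supposed to yield the fundamental domain. Note that finiteness of $S$ cannot follow from local finiteness on the open cone $C$ either, since $\Pi$ is not contained in $C\cup\{0\}$ precisely in the cusp situation; it is a Siegel-type finiteness statement. These assertions --- covering of rational rays, convexity of the union, the Siegel property, and the extraction of a fundamental domain --- are the actual theorem proved in \cite{Lo14}, so a sketch that assumes them establishes only the trivial inclusions. To repair the argument you would need to engage with the boundary behavior directly (as Looijenga does), rather than postulate that translates do not accumulate at rational boundary points.
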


As applications of Proposition \ref{looij}, one can immediately conclude the following two results.

\begin{lemma}\label{ratnef} Let $X$ be a Calabi--Yau variety with rational polyhedral nef cone $\Nef(X)$. Then there is a rational polyhedral fundamental domain for the action of $\Aut(X)$ on  $\Nefp(X) = \Nef(X)$.
\end{lemma}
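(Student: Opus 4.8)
The plan is to reduce the statement to Looijenga's Proposition \ref{looij}. First I would fix the lattice $N := N^1(X)/(\text{torsion})$, a finitely generated free $\ZZ$-module, and work in $N_{\RR} = N^1(X)_{\RR}$. The relevant open strictly convex cone is the ample cone $C := \Amp(X)$, which is the interior of $\Nef(X)$: it is open by definition, and it is strictly convex (salient) because if $D$ and $-D$ are both nef then $D\cdot \gamma \geq 0$ and $-D\cdot \gamma \geq 0$ for every curve class $\gamma$, forcing $D \equiv 0$. The group $\Gamma := \sigma(\Aut(X)) \subset \GL(N)$ acts on $N$ by integral matrices preserving $\Nef(X)$, hence preserving its interior $C$. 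These identifications place us exactly in the setting of Proposition \ref{looij}.

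Next I would observe that the rational polyhedrality of $\Nef(X)$ does all the work. Since $\Nef(X) = \overline{C}$ is generated as a convex cone by finitely many rational rays, it equals the convex hull of $\overline{C} \cap N_{\QQ}$; that is, $C^+ = \Nef(X)$, which also records the asserted equality $\Nefp(X) = \Nef(X)$. In particular I may simply take $\Pi := \Nef(X)$ as the candidate rational polyhedral cone in Proposition \ref{looij}: it is rational polyhedral, it lies in $C^+ = \Nef(X)$, and trivially $C \subset \Pi \subset \Gamma\cdot\Pi$ (take the identity element of $\Gamma$). Thus the hypotheses of Proposition \ref{looij} are satisfied.

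Applying Proposition \ref{looij} then yields $C^+ = \Gamma\cdot\Pi$ together with a rational polyhedral fundamental domain for the action of $\Gamma$ on $C^+ = \Nef(X)$. Since the $\Aut(X)$-action on $N^1(X)_{\RR}$ factors through the representation $\sigma$, a fundamental domain for $\Gamma = \sigma(\Aut(X))$ is precisely a fundamental domain for $\Aut(X)$ in the required sense (interiors of distinct translates disjoint unless $g^{\ast} = \mathrm{id}$), which completes the argument.

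I do not expect a serious obstacle here: the lemma is essentially a direct corollary of Looijenga's result once one recognizes that rational polyhedrality forces $C^+ = \Nef(X)$ and permits the trivial choice $\Pi = \Nef(X)$. The only points demanding genuine care are verifying that $\Amp(X)$ really meets Looijenga's openness and strict convexity hypotheses, and that $\sigma(\Aut(X))$ is an honest subgroup of $\GL(N)$ preserving the cone --- both of which are standard facts for a Calabi--Yau variety.
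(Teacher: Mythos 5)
Your proof is correct and coincides with the paper's own argument: the paper derives this lemma as an immediate application of Proposition \ref{looij}, exactly as you do. Your write-up simply makes explicit the choices $C = \Amp(X)$, $\Gamma = \sigma(\Aut(X))$, and $\Pi = \Nef(X)$ (using rational polyhedrality to get $C^+ = \Nef(X) = \Nefp(X)$), all of which are the intended verifications.
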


\begin{proposition}\label{mvsk} Let $X$ be a Calabi--Yau variety. Assume there is a rational polyhedral fundamental domain for the action of $\Aut(X)$ on $\Nefe(X)$, then $\Nefp(X) = \Nefe(X)$.
\end{proposition}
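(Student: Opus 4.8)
The plan is to obtain the statement as a direct application of Looijenga's Proposition \ref{looij}. Set $N := N^1(X)$ modulo torsion, which is a finitely generated free $\ZZ$-module since $X$ is Calabi--Yau (so $h^1(X,\OO_X)=0$ and $N^1(X)=\NS(X)$ is finitely generated), and let $C := \Amp(X) = \Nef(X)^{\circ}$ be the ample cone inside $N_{\RR} = N^1(X)_{\RR}$. By Kleiman's criterion $C$ is a nonempty open cone with $\overline{C} = \Nef(X)$, and it is strictly convex because a class $D$ with both $D$ and $-D$ nef must be numerically trivial. Consequently $C^{+}$, the convex hull of the rational points of $\overline{C}=\Nef(X)$, is precisely $\Nefp(X)$.

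Next I would fix the group and the candidate cone. Take $\Gamma := \sigma(\Aut(X)) \subseteq \GL(N)$, the image of the natural representation; since automorphisms preserve ampleness and the integral structure, $\Gamma$ is a subgroup of $\GL(N)$ preserving $C$. Let $\Pi$ be the rational polyhedral fundamental domain for the $\Aut(X)$-action on $\Nefe(X)$ furnished by hypothesis, so that $\Nefe(X) = \bigcup_{g \in \Aut(X)} g^{\ast}\Pi = \Gamma \cdot \Pi$.

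I would then verify the two containments required by Proposition \ref{looij}. First, $\Pi \subseteq C^{+}$: as $\Pi \subseteq \Nefe(X) \subseteq \Nef(X)$ is rational polyhedral, each of its finitely many extremal generators is a rational nef class and hence lies in $\Nefp(X) = C^{+}$, so their convex hull $\Pi$ does too. Second, $C \subseteq \Gamma \cdot \Pi$: every ample class is nef and effective (an ample divisor has a very ample, in particular effective, multiple), so $\Amp(X) \subseteq \Nef(X) \cap \Eff(X) = \Nefe(X) = \Gamma \cdot \Pi$.

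With these in place, Proposition \ref{looij} yields $C^{+} = \Gamma \cdot \Pi$, and translating back through $C^{+} = \Nefp(X)$ and $\Gamma \cdot \Pi = \Nefe(X)$ gives exactly $\Nefp(X) = \Nefe(X)$. I do not expect a serious obstacle: essentially all of the content is absorbed into Looijenga's result, and the only points requiring care are the identification $C^{+} = \Nefp(X)$, which rests on $\overline{\Amp(X)} = \Nef(X)$, and the inclusion $\Amp(X) \subseteq \Eff(X)$ needed to place the ample cone inside the $\Gamma$-orbit of $\Pi$.
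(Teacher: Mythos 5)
Your proof is correct and follows the same overall route as the paper: both arguments apply Looijenga's Proposition \ref{looij} to the data $C=\Amp(X)$, $\Gamma=\sigma(\Aut(X))\subset\GL(N^1(X))$, and $\Pi$ the assumed fundamental domain, using $\overline{C}=\Nef(X)$ (so $C^{+}=\Nefp(X)$) and $\Gamma\cdot\Pi=\Nefe(X)$. The one genuine difference is how the hypothesis $\Pi\subset C^{+}$ is verified. The paper's proof invokes Proposition \ref{lop2.15}, the Lazi\'{c}--Oguiso--Peternell inclusion $\Nefe(X)\subset\Nefp(X)$, which is a nontrivial external result for Calabi--Yau varieties, and deduces $\Pi\subset\Nefe(X)\subset\Nefp(X)=C^{+}$. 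You instead observe that $\Pi$, being rational polyhedral and contained in $\Nef(X)$, is generated by finitely many rational nef classes, each of which is a rational point of $\overline{C}$ and hence lies in $C^{+}$ by its very definition; so $\Pi\subset C^{+}$ needs nothing beyond the definition of the $+$-operation. This makes your argument more self-contained, requiring only Looijenga's proposition plus the elementary facts $\overline{\Amp(X)}=\Nef(X)$ and $\Amp(X)\subset\Eff(X)$. One small point to tighten: for $\Amp(X)\subset\Eff(X)$ your parenthetical justification (a very ample multiple) applies to integral or rational classes, so you should add that an ample $\RR$-class is a positive combination of integral ample classes (ampleness being an open condition), each of which is effective; this is standard and does not affect the correctness of the proof.
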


\begin{proof} This follows from Proposition \ref{looij} and Proposition \ref{lop2.15} below.
\end{proof}

\begin{proposition}[{\cite[Theorem 2.15]{LOP20}}]\label{lop2.15} Let $X$ be a Calabi--Yau variety. Then the inclusion $\Nefe(X) \subset \Nefp(X)$ holds.
\end{proposition}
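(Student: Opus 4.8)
The plan is to unwind the definition of $\Nefp(X)$ and reduce the statement to a convex-geometric assertion about rational points. By definition $\Nefp(X) = \Nef(X)^{+}$ is the convex hull of $\overline{\Nef(X)} \cap N^1(X)_{\QQ}$, and since the nef cone is already closed this is simply the convex hull of the rational nef classes. So the goal is to show that every $D \in \Nef(X) \cap \Eff(X)$ is a finite positive combination of rational nef classes. I would first dispose of the interior case: if $D$ lies in the interior of $\Nef(X)$, a whole neighbourhood of $D$ is nef, so choosing rational classes in that neighbourhood whose convex hull is a full-dimensional simplex containing $D$ exhibits $D$ as a convex combination of rational nef classes, giving $D \in \Nefp(X)$. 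Note this step does not use effectivity at all; the effectivity hypothesis is only needed when $D$ lies on $\partial\Nef(X)$.

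The reduction that brings effectivity into play is as follows. Since $D \in \Eff(X)$, I would write $D = \sum_{i=1}^{k} a_i [E_i]$ in $N^1(X)_{\RR}$ with $a_i > 0$ and $E_i$ prime divisors, choosing $k$ minimal. Minimality forces the rational classes $[E_1], \dots, [E_k]$ to be $\RR$-linearly independent, so $D$ lies in the relative interior of the rational simplicial cone $\sigma := \Cone([E_1], \dots, [E_k])$ inside the rational subspace $W := \sum_i \RR[E_i]$. Because $D \in \mathrm{relint}(\sigma)$ and $\sigma$ is full-dimensional in $W$, in a neighbourhood of $D$ within $W$ the cone $\sigma \cap \Nef(X)$ coincides with $\Nef(X) \cap W$; hence it suffices to realize $D$ as a positive combination of rational classes of $\Nef(X) \cap W$, and the only binding constraints are the curve classes in the supporting face $F := D^{\perp} \cap \NE(X)$.

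The hard part is the boundary case, and the crux is to show that effectivity forces $D$ onto a \emph{rational} part of $\partial\Nef(X)$. The obstruction is exactly the phenomenon that makes the cone conjecture difficult: $\partial\Nef(X)$ may be non-polyhedral with irrational extremal rays, as already happens for K3 surfaces with a round nef cone. The feature to exploit is that such irrational directions are not reachable as positive combinations of rational effective classes, so confining $D$ to the rational subspace $W$ and to $\mathrm{relint}(\sigma)$ should pin it to the minimal face $G$ of $\Nef(X)$ through it in such a way that the rational nef classes in $G$ accumulate at $D$. Concretely I would aim to prove that the rational points of $G$ are dense near $D$ — using either the density of rational points on the quadric/intersection-defined round pieces of $\partial\Nef(X)$ once such a piece carries a rational point, or a Nakayama--Zariski $\sigma$-decomposition of the effective class $D$ to certify that $D$ is rationally supported — and then write $D$ as a convex combination of such rational nef classes. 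Combining this with the interior case yields $D \in \Nefp(X)$ and hence $\Nefe(X) \subseteq \Nefp(X)$.

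I expect this density/rationality statement for the supporting face $G$ to be the main obstacle, precisely because it is the step where the geometry of Calabi--Yau varieties, rather than pure convexity, must enter: convex geometry alone cannot exclude irrational boundary behaviour of $\Nef(X)$, and it is only the effectivity of $D$, through its expression in terms of the integral prime classes $[E_i]$, that rationalizes the local structure of the nef cone at $D$.
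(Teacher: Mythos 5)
The paper gives no proof of this proposition at all --- it is quoted verbatim from \cite[Theorem 2.15]{LOP20} --- so the comparison must be made against that cited proof. Your interior case is fine, and so is the reduction to a representation $D = \sum_i a_i [E_i]$ with $a_i > 0$ and $[E_1],\dots,[E_k]$ linearly independent. But your argument stops exactly where you say it does: the boundary case, which is the entire content of the statement, is left as something you ``would aim to prove,'' and neither of the two tools you float can deliver it. The boundary of $\Nef(X)$ of a Calabi--Yau variety is not known to be, even locally, a rational quadric or any arithmetically meaningful hypersurface (nothing like the Beauville--Bogomolov form exists here), so ``density of rational points on the round pieces once one rational point exists'' has no foundation; and the Nakayama--Zariski decomposition of a \emph{nef} class is the class itself (its negative part vanishes), so it cannot certify anything. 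There is also a quieter error upstream: knowing the binding face $F = D^{\perp} \cap \NE(X)$ does not control $\Nef(X)$ near $D$, because constraints that are not binding at $D$ can accumulate against $D^{\perp}$; this accumulation is precisely the irrational-boundary phenomenon you are trying to exclude, so the assertion that ``the only binding constraints are the curve classes in $F$'' begs the question. As you yourself concede, convex geometry plus rationality of the $[E_i]$ cannot settle this; some geometric input is mandatory, and your proposal never identifies it.

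The missing ingredient --- and the engine behind the cited result --- is the cone theorem for klt pairs. Since $X$ is $\QQ$-factorial terminal with $K_X \equiv 0$, for $0 < \lambda \ll 1$ the pair $(X, \lambda D)$ is klt, and nefness of $\sum_i t_i E_i$ is the same as nefness of $K_X + \sum_i t_i E_i$. One then works in the finite-dimensional rational space $W$ of $\RR$-divisors supported on $\bigcup_i E_i$ and invokes the boundedness of lengths of extremal rays (Shokurov's polytope argument): every $(K_X+\Delta)$-negative extremal ray is spanned by a rational curve class $C$ with $0 > (K_X+\Delta)\cdot C \geq -2\dim X$, and this forces the locus $\{\Delta \in W : K_X + \Delta \ \mathrm{nef}\}$ to be cut out, in a neighbourhood of $\lambda D$, by finitely many \emph{rational} hyperplanes. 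Consequently $\lambda D$ lies in a rational polytope consisting of nef classes, hence is a convex combination of rational nef classes, i.e., $D \in \Nefp(X)$. Note that this is where the Calabi--Yau hypothesis (terminal singularities and $K_X \equiv 0$) genuinely enters; in your outline it never does, beyond the word ``effective.'' This MMP-theoretic input, or something equivalent to it, is what your proposal is missing, and it cannot be replaced by the convex-geometric and Diophantine considerations you describe.
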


\begin{remark} The generalized abundance conjecture (Conjecture \ref{abundconj}) also predicts the equality $\Nefp(X) = \Nefe(X)$.
\end{remark}

In order to study the structure of movable cones, we note some basic facts (see e.g., \cite{Ka88, Ka97}).

\begin{proposition}\label{ka2.3} Let $X$ be a Calabi--Yau variety. Then
\[ \Movox \subset \bigcup_{(X^{\prime}, \alpha)} \alpha_{\ast} \Nefe(X^{\prime}) \subset \Move(X), \]
and
\[ \Movox \subset \bigcup_{(X^{\prime}, \alpha)} \alpha_{\ast} \Nefp(X^{\prime}) \subset \Movp(X), \]
where the union on the right hand side is taken for all the marked minimal models $(X^{\prime}, \alpha)$ of $X$ up to isomorphism.
\end{proposition}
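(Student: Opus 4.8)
The plan is to reduce the four displayed inclusions to the single assertion $\Movox \subset \bigcup_{(X',\alpha)} \alpha_*\Nefe(X')$ and then to establish the latter by a minimal model program. Two of the four inclusions are formal. Since $K_X$ is numerically trivial, $X$ is already minimal, so every marking $\alpha\colon X'\dashrightarrow X$ is an isomorphism in codimension one; hence $\alpha_*$ identifies $N^1(X')_\RR$ with $N^1(X)_\RR$, carrying $\Eff(X')$ onto $\Eff(X)$ and $\Mov(X')$ onto $\Mov(X)$. As $\Nef(X')\subset\Mov(X')$ always holds (an ample class is basepoint free, hence movable, and a nef class is a limit of ample ones), we obtain $\alpha_*\Nefe(X')=\alpha_*(\Nef(X')\cap\Eff(X'))\subset\Mov(X)\cap\Eff(X)=\Move(X)$, and, taking convex hulls of rational points, also $\alpha_*\Nefp(X')\subset\Movp(X)$. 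This yields the two right-hand inclusions. Furthermore, Proposition \ref{lop2.15} applied on each model gives $\Nefe(X')\subset\Nefp(X')$, so the second left-hand inclusion $\Movox\subset\bigcup\alpha_*\Nefp(X')$ follows from the first. Thus everything reduces to proving $\Movox\subset\bigcup_{(X',\alpha)}\alpha_*\Nefe(X')$.

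For this core inclusion I would first treat a rational class. Let $D\in\Movox$ be a movable $\QQ$-divisor; since the interior of the movable cone is contained in the big cone $\Bigc(X)$, i.e. the interior of the effective cone, $D$ is big. Because $K_X\equiv 0$, for small $\varepsilon>0$ the pair $(X,\varepsilon D)$ is klt with big boundary and $K_X+\varepsilon D\equiv\varepsilon D$, so running a $(K_X+\varepsilon D)$-MMP amounts to running a $D$-MMP, contracting exactly the $D$-negative extremal rays. Movability of $D$ means its stable base locus carries no divisorial component, which forces every step of the program to be small; since $K_X\equiv 0$, each such step is a flop, and the program terminates (by \cite{BCHM10}, or by Kawamata \cite{Ka88, Ka97}) at a minimal model $X'$ on which the strict transform $D'$ of $D$ is nef. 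Being big and nef, $D'$ is effective, so $D'\in\Nefe(X')$; writing $\alpha\colon X'\dashrightarrow X$ for the inverse of this chain of flops we conclude $D=\alpha_*D'\in\alpha_*\Nefe(X')$.

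The main obstacle will be to pass from rational to arbitrary real classes of $\Movox$, since one cannot run the MMP on an irrational divisor directly. Here I would invoke the local finiteness, due to Kawamata, of the chamber decomposition of the interior of the movable cone: the cones $\alpha_*\Nef(X')$ are closed, their interiors $\alpha_*\Amp(X')$ are pairwise disjoint, and only finitely many of them meet any fixed compact subset of $\Movox$. Given a real class $D\in\Movox$, the rational case shows that the chambers cover a dense subset of a neighborhood of $D$; local finiteness then places $D$ in the closure $\alpha_*\Nef(X')$ of one of them, and bigness of $D$ once more upgrades nefness to membership in $\Nefe(X')$. Proving this local finiteness unconditionally is the delicate input and is precisely what Kawamata's analysis of Calabi--Yau fiber spaces provides; granting it, the remaining steps are routine.
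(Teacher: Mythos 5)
Your treatment of the two right-hand inclusions, your reduction of the second display to the first via Proposition \ref{lop2.15}, and your MMP argument for rational classes all coincide with the paper's proof (which simply declares the right-hand inclusions ``clear''). The gap is in your final paragraph, and it originates in a false premise: the claim that ``one cannot run the MMP on an irrational divisor directly.'' The theorem you yourself cite, \cite[Theorem 1.2]{BCHM10}, is stated for klt pairs $(X,\Delta)$ in which $\Delta$ is a big \emph{$\RR$-divisor}; the MMP with scaling exists and terminates for such pairs. This is exactly what the paper does: it takes an arbitrary class in $\Movox$, represents it by a big effective $\RR$-divisor $D$, chooses $\varepsilon>0$ with $(X,\varepsilon D)$ klt, and runs the $(K_X+\varepsilon D)$-MMP; movability forces every step to be a flop by \cite[Lemma 2.3]{Ka88}, so the output is a marked minimal model $(X',\alpha)$ on which the transform $D'$ of $D$ is nef and effective. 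No rational/irrational dichotomy is needed at all.

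Because of that premise, your passage from $\QQ$-classes to $\RR$-classes rests on an input you never establish: local finiteness of the chamber structure $\{\alpha_{\ast}\Nef(X')\}$ on $\Movox$, which you attribute to ``Kawamata's analysis of Calabi--Yau fiber spaces.'' Kawamata's results in \cite{Ka97} are proved for threefolds and threefold fiber spaces (this is precisely why the paper's Proposition \ref{ka2.32}, which quotes \cite[Theorem 2.3]{Ka97}, has to be stated conditionally on termination of flops); no such local finiteness statement is available off the shelf in arbitrary dimension, and in higher dimensions the natural way to prove it is through the BCHM finiteness-of-models theorems for $\RR$-divisors --- the very machinery your detour was designed to avoid. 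So, as written, the final step is a genuine gap, though you flag it honestly and it is easily repaired: run your second-paragraph argument verbatim with $D$ an arbitrary big effective $\RR$-divisor, citing \cite{BCHM10} for existence and termination of the MMP for klt pairs with big $\RR$-boundary, and delete the third paragraph entirely.
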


\begin{proof} The second and the fourth inclusions are clear. In view of Proposition \ref{lop2.15}, it is enough to show the first inclusion $\Movox \subset \bigcup_{(X^{\prime}, \alpha)} \alpha_{\ast} \Nefe(X^{\prime})$. Take an arbitrary element in $\Movox$ which is represented by a big $\RR$-divisor on $X$, say $D$. We can choose a small positive real number $\varepsilon > 0$ such that the pair $(X, \varepsilon D)$ is Kawamata log terminal. %We note that by our definition of Calabi--Yau varieties, the canonical divisor $K_X$ is trivial (not only numerically trivial).
Since $D$ is big and $K_X$ is numerically trivial, by \cite[Theorem 1.2]{BCHM10}, we can run a minimal model program for the pair $(X, \varepsilon D)$ to obtain a new pair $(X^{\prime}, D^{\prime})$ such that $K_{X^{\prime}} + D^{\prime}$ is nef. Since $D$ is movable, each step of the minimal model program for $(X, \varepsilon D)$ is a flop (\cite[Lemma 2.3]{Ka88}). This implies that $X^{\prime}$ is a minimal model of $X$, in particular, $K_{X^{\prime}}$ is numerically trivial. Hence $D^{\prime}$ is nef.
\end{proof}

The next result is conditional and we have to assume \textit{Flop Conjecture II} holds, that is, any sequence of $D$-flops terminates (see \cite{Ma91}). Note that nowadays we know the existence of $D$-flops (or more generally, $D$-flips; see \cite{BCHM10} and references therein).

\begin{proposition}\label{ka2.32} Let $X$ be a Calabi--Yau variety. Assume that any sequence of $D$-flops on $X$ terminates. Then
\[ \Move(X) = \bigcup_{(X^{\prime}, \alpha)}\alpha_{\ast}\Nefe(X^{\prime}), \]
where the union on the right hand side is taken for all the marked minimal models of $X$ up to isomorphism.
\end{proposition}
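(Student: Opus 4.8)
The inclusion $\bigcup_{(X',\alpha)}\alpha_{\ast}\Nefe(X') \subseteq \Move(X)$ is already contained in Proposition \ref{ka2.3}, so the plan is to establish the reverse inclusion $\Move(X) \subseteq \bigcup_{(X',\alpha)}\alpha_{\ast}\Nefe(X')$; that is, I would show that every class in $\Move(X)$ becomes nef on some marked minimal model. The strategy is to run a $D$-minimal model program. Fix a class in $\Move(X)$ and represent it by an effective movable $\RR$-divisor $D$. Since $X$ is Calabi--Yau we have $K_X$ numerically trivial, so running the program with respect to $D$ coincides at every step with a $K_X$-trivial program, and each elementary step is a flop.

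If $D$ is not already nef, there is a $D$-negative extremal ray $R \subset \NE(X)$. Because $D$ is movable, its stable base locus has codimension $\geq 2$, which forces the extremal contraction $\phi_R$ associated to $R$ to be small: a divisorial contraction would contract a divisor swept out by curves of class in $R$, and $D\cdot R < 0$ would place that divisor in the stable base locus of $D$, contradicting movability (cf. \cite[Lemma 2.3]{Ka88}). Existence of the corresponding flip is known (see \cite{BCHM10} and the references therein), and since $K_X$ is numerically trivial this flip is in fact a flop $X \dashrightarrow X^{+}$, i.e.\ a $D$-flop. As a flop is an isomorphism in codimension $1$, the strict transform $D^{+}$ of $D$ remains effective and movable, so the process iterates. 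Invoking the hypothesis that every sequence of $D$-flops terminates, after finitely many steps I would reach a birational map $g: X \dashrightarrow X'$ to a minimal model on which the strict transform $D' := g_{\ast}D$ is nef; being the transform of an effective class under an isomorphism in codimension $1$, $D'$ is again effective, so $D' \in \Nefe(X')$. Setting $\alpha := g^{-1}: X' \dashrightarrow X$, the pair $(X', \alpha)$ is a marked minimal model of $X$ and $D = \alpha_{\ast}D' \in \alpha_{\ast}\Nefe(X')$, as desired.

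The main obstacle, and the precise reason the conditional termination hypothesis enters, is that $D$ need not be big. When $D$ lies in the interior $\Movox$ it is big, and one can appeal to \cite{BCHM10} both to run and to terminate the program, exactly as in the proof of Proposition \ref{ka2.3}; but a general class of $\Move(X)$ lies on the boundary and is typically not big, so BCHM no longer guarantees termination, and the hypothesis on $D$-flops is exactly what fills this gap. A secondary point to verify carefully is that every class of $\Move(X) = \Mov(X)\cap\Eff(X)$ is genuinely movable --- that is, has trivial negative part in its divisorial Zariski decomposition --- so that the extremal contractions encountered are small rather than divisorial; this uses that an effective class on the boundary of the closed movable cone carries no fixed divisorial component. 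Finally, one should track the markings so that the birational maps compose correctly and the union is genuinely indexed by isomorphism classes of marked minimal models.
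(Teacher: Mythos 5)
Your proposal is correct and is essentially the paper's own argument: the paper proves this proposition simply by citing \cite[Theorem 2.3]{Ka97} (the Calabi--Yau threefold case) and observing that Kawamata's proof goes through verbatim once termination of $D$-flops is assumed, and that proof is precisely the $D$-MMP you spell out --- $D$-negative extremal contractions are small because the class lies in $\Mov(X)$ and is pseudo-effective, flips/flops exist by \cite{BCHM10}, and the termination hypothesis finishes the run on a marked minimal model where the transform lies in $\Nefe(X^{\prime})$. Your closing caveats (non-big classes on the boundary being exactly where the termination hypothesis is needed, and effective classes in $\Mov(X)$ having trivial negative part so the contractions are small) are the right points to check and are handled the same way in Kawamata's argument.
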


\begin{proof} \cite[Theorem 2.3]{Ka97} proves the case of Calabi--Yau $3$-folds. The same proof works assuming that any sequence of $D$-flops on $X$ terminates.
\end{proof}

In dimension three and four, any sequence of $D$-flops terminates (see \cite{Ma91} and references therein), while in higher dimension, it is largely open in general. We mention the following rather special termination result, which is precisely sufficient for our purpose.

\begin{definition} Let $X$ be a normal variety and $D$ a Weil divisor on $X$, such that $K_X + D$ is $\RR$-Cartier. For a birational morphism $\mu: X^{\prime} \to X$ from a normal variety $X^{\prime}$ and a prime Weil divisor $E^{\prime}$ on $X^{\prime}$, we define the \textit{log discrepancy} $a(E^{\prime}; X, D)$ by
\[ a(E^{\prime}; X, D) := (\mathrm{The \ coefficient \ of} \ E^{\prime} \ \mathrm{in} \ K_{X^{\prime}} - \mu^{\ast}(K_X + D)) + 1. \]
For a proper closed subset $W$ of $X$, we define the \textit{minimal log discrepancy} of $(X,D)$ by
\[ \mld(W; X, D) := \inf_{\mu(E^{\prime})\subset W} a(E^{\prime}; X, D). \]
\end{definition}

\begin{proposition}\label{mz4.1} Let $X$ be a smooth Calabi--Yau variety satisfying that each of its minimal models is smooth. Let $D$ be an effective $\RR$-divisor on $X$ such that the pair $(X, D)$ has at worst log canonical singularities. Then any sequence of $D$-flops on $X$ terminates.
\end{proposition}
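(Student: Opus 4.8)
The plan is to manufacture a strictly monotone, integer-valued invariant along any sequence of $D$-flops and bound it, so that no infinite sequence can exist; concretely, I would run Shokurov's difficulty function built from the log discrepancies of the log canonical pair $(X,D)$. First I would record the structure of such a sequence. Write
\[ X = X_0 \dashrightarrow X_1 \dashrightarrow \cdots, \]
where the $i$-th step factors as a small $D_i$-negative contraction $f_i\colon X_i \to Y_i$ followed by its flip $f_i^{+}\colon X_{i+1}\to Y_i$, and $D_i$ denotes the strict transform of $D$ on $X_i$. Each $X_i$ is a minimal model of $X$, hence smooth by hypothesis; since the flops are isomorphisms in codimension one, the coefficients of $D_i$ coincide with those of $D$ and $(X_i,D_i)$ remains log canonical. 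The crucial point is that $K_{X_i}\equiv 0$, so every step is crepant for $K$ and, being an isomorphism in codimension one, preserves all divisorial discrepancies: for each divisorial valuation $E$ over the common function field the number $a(E;X_i)$ is independent of $i$, and only the boundary contribution $\mathrm{mult}_E(\mu^{\ast}D_i)$ varies from step to step, through the identity $a(E;X_i,D_i)=a(E;X_i)-\mathrm{mult}_E(\mu^{\ast}D_i)$.

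Next I would establish the monotonicity of log discrepancies under a single $D$-flop. Passing to a common log resolution of $X_i\to Y_i\leftarrow X_{i+1}$ and using that the flopping curves are $D_i$-negative while the flopped curves are $D_{i+1}$-positive (again with $K\equiv 0$), the standard flip comparison gives
\[ a(E;X_{i+1},D_{i+1})\ \geq\ a(E;X_i,D_i) \]
for every valuation $E$ whose center on $Y_i$ lies in the indeterminacy locus, with strict inequality for at least one such $E$; away from this locus the pair is unchanged. In words, the pair $(X_i,D_i)$ improves exactly over the flopping locus and nowhere else.

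I would then set
\[ d(X_i,D_i):=\#\{\,E \ \text{exceptional over}\ X_i : 0 < a(E;X_i,D_i) < 1\,\}, \]
deliberately excluding the log canonical places $a=0$ (which may be infinite in number but, being crepant lc places, are untouched by the flop). Smoothness of $X_i$ makes $(X_i,0)$ terminal, and combined with the existence of terminalizations from \cite{BCHM10} this forces the counted set to be finite, so $d(X_i,D_i)\in\ZZ_{\geq 0}$; the discreteness of log discrepancies on a smooth ambient space keeps the values from accumulating at the threshold. The monotonicity step lifts at least one counted valuation across $a=1$ while producing no new valuation below it, whence $d(X_{i+1},D_{i+1})<d(X_i,D_i)$. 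A strictly decreasing sequence of non-negative integers is finite, which yields termination.

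The main obstacle I anticipate is precisely the well-definedness and strict decrease of this difficulty in the merely log canonical, $\RR$-coefficient setting. I must verify that the count $\{0<a<1\}$ is genuinely finite — this is exactly where smoothness of \emph{every} minimal model, not just of $X$, is indispensable, via a terminalization argument on the smooth $X_i$ — and, more delicately, that each flop strictly lowers $d$ rather than merely shuffling valuations within the band $(0,1)$; that is, I need the improving valuation to cross the integer threshold $1$, not just increase. I would secure both points by reducing to the combinatorial computation of log discrepancies on the smooth spaces $X_i$, where the possible values lie in an explicit discrete set determined by the fixed coefficients of $D$ and $\dim X$, so that strict increase over the flopping locus cannot persist without crossing $1$.
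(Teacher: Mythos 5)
Your structural observations (each $X_i$ smooth, coefficients of $D_i$ stable under proper transform, $(X_i,D_i)$ still lc, and weak monotonicity of log discrepancies under a flop, strict for some valuation centered on the flipping locus) are all correct and also underlie the paper's proof. But the engine of your argument---the difficulty $d(X_i,D_i)=\#\{E : 0<a(E;X_i,D_i)<1\}$---breaks down twice in the log canonical setting. First, this set need not be finite: smoothness controls $(X_i,0)$, not $(X_i,D_i)$. If $D$ has a coefficient-one component $D_1$ meeting a component $D_2$ of coefficient $c\in(0,1)$ transversally (a perfectly lc configuration allowed by the hypotheses), then the monomial valuation over $D_1\cap D_2$ with weights $(m,1)$ has log discrepancy $m\cdot 0+1\cdot(1-c)=1-c\in(0,1)$ for every integer $m\geq 1$, giving infinitely many counted valuations. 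The finiteness of $\{a<1\}$ via terminalization is a klt statement; it fails exactly when lc places exist, which is the case you cannot exclude. Second, even where finite, $d$ is not shown to decrease, and in fact it can increase: a flop only pushes log discrepancies up, so an lc place ($a=0$, deliberately excluded from your count) centered on the flopping locus can move into the band $(0,1)$ and \emph{enter} the count, while counted valuations may increase from, say, $0.3$ to $0.5$ without crossing the threshold $1$. Discreteness of log discrepancies bounds the possible values but does not force any particular valuation to cross $1$, so ``strict increase somewhere'' does not yield ``strict decrease of $d$''. Note that if such a soft difficulty argument worked for lc pairs, it would settle termination questions that are well-known open problems.

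The paper takes a different and sturdier route: it invokes Shokurov's reduction of termination to two properties of \emph{minimal} log discrepancies \cite{Sh04}, namely lower semicontinuity of the function $x\mapsto\mld(x;X_i,D_i)$, which holds because each $X_i$ is smooth \cite{EMY03}, and the ascending chain condition for the mlds of the flipped loci, which follows because these mlds lie in $[0,\dim X]$ and in a discrete set by Kawakita's discreteness theorem \cite{Ka14} (using again smoothness and the stable coefficient set), hence form a finite set. Your instinct to exploit discreteness on the smooth models is the right one, but it must be fed into Shokurov's mld criterion---a statement about the mld function on points---rather than into a valuation-counting difficulty.
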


\begin{proof} This is implied in the proof of \cite[Theorem 4.1]{MZ13}, which we will recall here for the sake of completeness. Consider the following sequence of $D$-flops on $X$:
\[ \xymatrix{
  X:= X_1 \ar@{-->}[rr]^{\phi_1} \ar[dr]_{f_1}
  &  &    X_2 \ar[dl]^{f_1^{+}} \ar@{-->}[rr]^{\phi_2} \ar[dr]_{f_2} &  &    X_3 \ar[dl]^{f_2^{+}} \ar@{-->}[rr]^{}
  &  &   \cdots   \\
  & Z_1 & & Z_2              }.
\]
Here $D_1 := D$ and $D_{i+1}$ is the proper transform of $D_{i}$ by $\phi_{i}$ for every $i \geq 1$. By \cite{Sh04}, it is enough to show the following two statements.

\medskip $\mathrm{(1)}$ For each $i \geq 1$, the function $p_i$ on $X_i$ defined by
\[ p_i(x) = \mld(x; X_i, D_i), \]
where $x \in X_i$, is lower semi-continuous.

\medskip $\mathrm{(2)}$ Let $\mathfrak{S}$ be the set of the minimal log discrepancies defined by
\[ \mathfrak{S} := \bigcup_{i \geq 1} \mld(W_i; X_i, D_i), \]
where $W_i$ is the exceptional locus of $f_i$. The set $\mathfrak{S}$ satisfies the ascending chain condition.

\medskip For item $\mathrm{(1)}$, since every $X_i$ is smooth by our assumption, each $p_i$ is lower semi-continuous by \cite[Theorem 4.4]{EMY03}. For item $\mathrm{(2)}$, we first note that $\mld(W_i; X_i, D_i) \leq \dim X_i$ since all $X_i$ are smooth. On the other hand, all pairs $(X_i, D_i)$ still have only log canonical singularities since each $f_i$ is the contraction of a $(K_{X_i} + D_i)$-negative extremal ray. Therefore, $\mld(W_i; X_i, D_i) \geq 0$. Moreover, since all $X_i$ are smooth and the sets of all coefficients of $D_i$ are stable, the set of log discrepancies of $(X_i, D_i)$ is discrete by \cite[Theorem 5.2]{Ka14}. So $\mathfrak{S}$ is a finite set and the assertion follows.
\end{proof}

Now we are ready to show that, if a Calabi--Yau variety admits only finitely many minimal models up to isomorphism, and for each of these minimal models, the cone conjecture for the nef cone holds, then the cone conjecture for the movable cone also holds for the given Calabi--Yau variety.

\begin{proposition}\label{techcoh} Let $X$ be a Calabi--Yau variety. Assume that any sequence of $D$-flops on $X$ terminates. Suppose there are only finitely many minimal models of $X$ up to isomorphism, say, $X_0, \dots, X_k$. Suppose further that for each $X_i$, there is a rational polyhedral cone $\Pi_i\subset \Nefe(X_i)$ such that $\Aut(X_i)\ast\Pi_i = \Nefe(X_i)$. Here $\Pi_i$ is not necessarily a fundamental domain. Then $\Move(X) = \Movp(X)$ and the cone conjecture holds for $\Move(X)$.
\end{proposition}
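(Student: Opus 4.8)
The plan is to exhibit $\Move(X)$ as a single orbit $\Bir(X)\cdot\Pi$ of one rational polyhedral cone $\Pi$ and then feed this into Looijenga's Proposition~\ref{looij}. Throughout I use that $X$, being Calabi--Yau, is its own minimal model, so that $\Bir(X)$ acts on $N^1(X)_{\RR}$ preserving $\Mov(X)$, $\Eff(X)$, and hence $\Move(X)$.

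First I would collapse the chamber decomposition. Since any sequence of $D$-flops on $X$ terminates, Proposition~\ref{ka2.32} gives
\[ \Move(X)=\bigcup_{(X',\alpha)}\alpha_{\ast}\Nefe(X'), \]
the union ranging over marked minimal models up to isomorphism. I regroup this union by the abstract isomorphism class $X_i$ of the underlying variety. Fixing one marking $\alpha_i\colon X_i\dashrightarrow X$ for each $i$, any marking $\beta\colon X_i\dashrightarrow X$ factors as $\beta=g\circ\alpha_i$ with $g=\beta\circ\alpha_i^{-1}\in\Bir(X)$, whence $\beta_{\ast}\Nefe(X_i)=g_{\ast}(\alpha_i)_{\ast}\Nefe(X_i)$. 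As $(X',\alpha)$ and $(X_i,\beta)$ yield the same chamber whenever $X'\cong X_i$, this produces
\[ \Move(X)=\bigcup_{i=0}^{k}\Bir(X)\cdot(\alpha_i)_{\ast}\Nefe(X_i). \]

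Next I build the cone and compute its orbit. Set $\Pi$ to be the convex hull of $\bigcup_{i=0}^{k}(\alpha_i)_{\ast}\Pi_i$; this is rational polyhedral, and since each $(\alpha_i)_{\ast}\Pi_i\subset(\alpha_i)_{\ast}\Nefe(X_i)\subset\Move(X)$ and $\Move(X)=\Mov(X)\cap\Eff(X)$ is convex, we get $\Pi\subset\Move(X)$. Using $\Nefe(X_i)=\bigcup_{h\in\Aut(X_i)}h_{\ast}\Pi_i$ together with the identity $(\alpha_i)_{\ast}h_{\ast}=(g_h)_{\ast}(\alpha_i)_{\ast}$, where $g_h:=\alpha_i\circ h\circ\alpha_i^{-1}\in\Bir(X)$, each cone $(\alpha_i)_{\ast}\Nefe(X_i)$ becomes a union of $\Bir(X)$-translates of $(\alpha_i)_{\ast}\Pi_i\subset\Pi$. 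Combined with the display above, this gives $\Move(X)\subset\Bir(X)\cdot\Pi$; the reverse inclusion holds because $\Pi\subset\Move(X)$ and $\Bir(X)$ preserves $\Move(X)$. Hence $\Move(X)=\Bir(X)\cdot\Pi$. Finally I apply Proposition~\ref{looij} with $N=N^1(X)$ modulo torsion, the open strictly convex cone $C=\Movox$ (strict convexity because $\Mov(X)\subset\Psef(X)$ contains no line), the group $\Gamma$ equal to the image of $\Bir(X)$ in $\GL(N)$, and the cone $\Pi$ above, which lies in $C^{+}=\Movp(X)$ since its rational generators lie in $\Mov(X)=\overline{C}$. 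The covering hypothesis $C\subset\Gamma\cdot\Pi$ holds because $\Movox\subset\Move(X)=\Gamma\cdot\Pi$, so Proposition~\ref{looij} yields both $C^{+}=\Gamma\cdot\Pi$ and a rational polyhedral fundamental domain for $\Bir(X)$ on $C^{+}$. Reading $C^{+}=\Movp(X)$ and $\Gamma\cdot\Pi=\Move(X)$, this is exactly $\Movp(X)=\Move(X)$ together with the sought fundamental domain.

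The main obstacle I anticipate is the bookkeeping in the first two steps: one must verify carefully that the passage among the three relevant actions---the set of all markings of a fixed $X_i$, the right action of $\Aut(X_i)$ on markings, and the left action of $\Bir(X)$ by post-composition---is compatible with pushforward on $N^1$, so that each chamber $\alpha_{\ast}\Nefe(X')$ genuinely is a $\Bir(X)$-translate of one of the finitely many cones $(\alpha_i)_{\ast}\Pi_i$. The remaining points---that $\Movox$ is a nonempty open strictly convex cone, that $\Bir(X)$ preserves $\Movox$ and acts on the lattice $N^1(X)$, and the identification $C^{+}=\Movp(X)$---are routine but should be recorded.
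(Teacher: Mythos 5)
Your proof is correct and takes essentially the same route as the paper's: both invoke Proposition \ref{ka2.32} to write $\Move(X)$ as the union of chambers $\alpha_{\ast}\Nefe(X^{\prime})$, take $\Pi$ to be the convex hull of the finitely many cones $\alpha_{i\ast}\Pi_i$, deduce $\Move(X)=\Bir(X)\ast\Pi$ with $\Pi\subset\Movp(X)$, and conclude via Looijenga's Proposition \ref{looij}. The only difference is one of detail: you make explicit the marking/conjugation bookkeeping and the verification of Looijenga's hypotheses (lattice action, strict convexity, $C^{+}=\Movp(X)$), which the paper's proof leaves implicit.
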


\begin{proof} We fix a birational map $\alpha_i: X_i \dashrightarrow X$ for each $X_i$. Every $\alpha_{i{\ast}}\Pi_i$ is rational polyhedral, so the convex hull $\Pi$ of the finite union $\bigcup_i\alpha_{i{\ast}}\Pi_i$ is again rational polyhedral. By Proposition \ref{ka2.32},
\[ \Move(X) = \bigcup_{(X^{\prime}, \alpha)}\alpha_{\ast}\Nefe(X^{\prime}), \]
where the union on the right hand side is taken for all the marked minimal models $(X^{\prime}, \alpha)$ of $X$ up to isomorphism.

On the other hand, $\Aut(X_i)\ast\Pi_i = \Nefe(X_i)$ and
\[ \Pi_i \subset \Nefe(X_i) \subset \Nefp(X_i) \]
for each $i$. The last inclusion follows from Proposition \ref{lop2.15}. Therefore, $\Move(X) = \Bir(X)\ast \Pi$, and $\Pi \subset \Movp(X)$.

The two cones $\Move(X)$ and $\Movp(X)$ share the same interior $\Movox$. By Proposition \ref{looij}, we obtain that
\[ \Movp(X) = \Bir(X)\ast \Pi = \Move(X) \]
and that there is a rational polyhedral fundamental domain for the action of $\Bir(X)$ on $\Movp(X) = \Move(X)$.
\end{proof}

Similarly, we can prove the following unconditional result.

\begin{proposition}\label{techcog} Let $X$ be a Calabi--Yau variety. Suppose there are only finitely many minimal models of $X$ up to isomorphism, say, $X_0, \dots, X_k$. Suppose further that for each $X_i$, there is a rational polyhedral cone $\Pi_i\subset \Nefp(X_i)$ such that $\Aut(X_i)\ast\Pi_i = \Nefp(X_i)$. Then there is a rational polyhedral fundamental domain for the action of $\Bir(X)$ on $\Movp(X)$.
\end{proposition}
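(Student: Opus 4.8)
The plan is to follow the proof of Proposition \ref{techcoh} almost verbatim, but to feed Looijenga's criterion with the \emph{unconditional} inclusions of Proposition \ref{ka2.3} in place of the conditional equality of Proposition \ref{ka2.32}. This swap is precisely what forces the conclusion to concern $\Movp(X)$ rather than $\Move(X)$, since without termination of $D$-flops we cannot cover all of $\Move(X)$ by pulled-back nef chambers. First I would fix, for each of the finitely many minimal models $X_i$, a birational map $\alpha_i\colon X_i \dashrightarrow X$. As $X$ and each $X_i$ are minimal Calabi--Yau varieties, $\alpha_i$ is an isomorphism in codimension one, so $\alpha_{i\ast}$ is a $\QQ$-linear isomorphism of the spaces $N^1(\cdot)_{\RR}$ and each $\alpha_{i\ast}\Pi_i$ is rational polyhedral. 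Let $\Pi$ be the convex hull of the finite union $\bigcup_i \alpha_{i\ast}\Pi_i$; it is again rational polyhedral.

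The core step is the two-sided containment $\Movox \subset \Bir(X)\ast\Pi \subset \Movp(X)$. The right inclusion is soft: by the second chain of Proposition \ref{ka2.3} each $\alpha_{i\ast}\Pi_i \subset \alpha_{i\ast}\Nefp(X_i) \subset \Movp(X)$, and since $\Movp(X)$ is convex it contains $\Pi$, while being $\Bir(X)$-invariant it contains $\Bir(X)\ast\Pi$. For the left inclusion I would use that every marked minimal model $(X',\alpha)$ of $X$ has $X'$ isomorphic to some $X_i$ via an isomorphism $\beta$, so that $\alpha = g\circ\alpha_i\circ\beta$ with $g := \alpha\circ\beta^{-1}\circ\alpha_i^{-1} \in \Bir(X)$, whence $\alpha_\ast\Nefp(X') = g_\ast\alpha_{i\ast}\Nefp(X_i)$. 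Conjugating the hypothesis $\Aut(X_i)\ast\Pi_i = \Nefp(X_i)$ through $\alpha_i$ (each $h \in \Aut(X_i)$ yields $\alpha_i h\alpha_i^{-1} \in \Bir(X)$ with $\alpha_{i\ast}h_\ast = (\alpha_i h\alpha_i^{-1})_\ast\alpha_{i\ast}$) gives $\alpha_{i\ast}\Nefp(X_i) \subset \Bir(X)\ast\Pi$. Combining these with the left half of Proposition \ref{ka2.3}, $\Movox \subset \bigcup_{(X',\alpha)}\alpha_\ast\Nefp(X') \subset \Bir(X)\ast\Pi$.

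With the sandwich in hand I would apply Proposition \ref{looij} with $N = N^1(X)$, the open strictly convex cone $C = \Movox$, the group $\Gamma = \Bir(X)$ acting through $\sigma$ (which preserves $\Movox$, as birational automorphisms of the minimal $X$ are isomorphisms in codimension one), and the rational polyhedral cone $\Pi$. Here $C^+ = \Movp(X)$, because $\overline{\Movox} = \Mov(X)$ and $\Movp(X)$ is by definition the convex hull of the $\QQ$-points of $\Mov(X)$; thus $\Pi \subset C^+$ and $C \subset \Gamma\ast\Pi$ are exactly the two hypotheses of Looijenga's result. I therefore obtain $\Movp(X) = \Bir(X)\ast\Pi$ together with a rational polyhedral fundamental domain for the action of $\Bir(X)$ on $\Movp(X)$, as required.

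The point where care is needed, rather than difficulty, is the bookkeeping in the second paragraph: identifying $\bigcup_{(X',\alpha)}\alpha_\ast\Nefp(X')$ with $\Bir(X)\ast\Pi$, i.e. checking that the finitely many $\Aut(X_i)$-orbits of the $\Pi_i$, once transported to $X$ and translated by $\Bir(X)$, account for every chamber. Conceptually, the only genuine limitation compared with Proposition \ref{techcoh} is that, lacking termination of $D$-flops, Proposition \ref{ka2.3} controls only the open interior $\Movox$ and sandwiches it between the union of pulled-back nef cones and $\Movp(X)$; but this is exactly the shape of input Looijenga's Proposition \ref{looij} consumes (an open cone covered by $\Gamma\ast\Pi$, with output about $C^+$), so no extra termination or abundance hypothesis is needed and the argument goes through unconditionally.
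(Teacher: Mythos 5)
Your proposal is correct and is exactly the paper's intended argument: the paper proves Proposition \ref{techcog} only by the word ``Similarly,'' meaning the proof of Proposition \ref{techcoh} repeated with the unconditional inclusions of Proposition \ref{ka2.3} substituted for the conditional equality of Proposition \ref{ka2.32}, which is precisely what you carry out. Your additional bookkeeping (transporting the $\Aut(X_i)$-orbits to $X$ via the markings and identifying $C^{+}=\Movp(X)$ before invoking Proposition \ref{looij}) is a faithful, slightly more detailed write-up of that same route.
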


%\medskip

\section{Proof of Theorem \ref{main2}}\label{prof2}

\begin{proof}[Proof of Theorem \ref{main2}] Let $Z$ be a Fano manifold of dimension $\geq 4$ whose extremal contractions are of fiber type. Let $X \in |-K_Z|$ be a smooth member, which is a smooth Calabi--Yau variety (Proposition \ref{co3.1}). By Propositions \ref{mz4.1} and \ref{techcoh}, it suffices to show that $X$ has the unique minimal model itself up to isomorphism. Recall that by a result of Kawamata (\cite[Theorem 1]{Ka08}), any birational map between Calabi--Yau varieties is decomposed into finitely many flops up to automorphisms of the target variety. Thus, it is enough to investigate the small contraction and the associated flops of $X$. By \cite[Theorem 5.7]{Ka88}, any small contraction of a Calabi--Yau variety is given by a codimension one face of $\Nef(X)$  up to automorphisms of $X$.

By Theorem \ref{kover}, $\NE(Z) \cong \NE(X)$, or dually, $\Nef(X) \cong \Nef(Z)$. Let $f: Z \to V$ be the contraction of $Z$ associated with an arbitrary extremal ray. Then the Stein factorization $g: X \to W$ of the restriction morphism $f|_X$ is exactly the contraction of $X$ associated with this extremal ray under the identification $\NE(Z) \cong \NE(X)$. As discussed above, we may assume that the elementary extremal contraction $g$ is small. By assumption, $Z$ has finitely many elementary extremal contractions which are all of fiber type.

If $\dim V \leq \dim Z - 2$, then $g$ must be of fiber type. Therefore, we may assume that $\dim V = \dim Z - 1$. Then there is an open dense subset $U \subset V$ such that $f^{-1}(U) \to U$ is equidimensional (each fiber has dimension one). By \cite[Theorem 3.1]{An85} (and its proof), $f^{-1}(U) \to U$ is a conic bundle, so a general fiber intersects $X$ at two points. It follows that $f|_X$ is a ramified double cover. Let $\theta: X \dashrightarrow X$ be the associated covering involution, which is not an isomorphism because $f|_X$ is ramified. Since $g$ is small by our assumption, $\theta$ is isomorphic in codimension one. From $\rho(X/W) = 1$, it follows that $\theta: X \dashrightarrow X$ is the flop of the small extremal contraction $g: X \to W$. This completes the proof.
\end{proof}

\begin{example}\label{homog} Let $Z$ be a rational homogeneous manifold of dimension $\geq 4$. One can write $Z = G/P$ where $G$ is a semi-simple Lie group and $P$ is a parabolic subgroup of $G$. It is known that $Z$ is a Fano manifold and all extremal contractions of $Z$ are of fiber type (see e.g., \cite[Proposition 2.3 and Proposition 2.6]{M15}), so one can apply Theorem \ref{main2}. %Let us note that hypersurfaces in certain rational homogeneous manifolds give pairs of Calabi--Yau manifolds which are derived equivalent but not birationally equivalent (see \cite{Ma20} and references therein). By our result, such pairs of Calabi--Yau manifolds are birationally equivalent if and only if they are isomorphic as abstract varieties.
\end{example}

\begin{example}\label{prod} Let $Z:= Z_1 \times \cdots \times Z_n$ be the product of Fano manifolds $Z_i$ with Picard number $1$. Then clearly $Z$ is a Fano manifold and all extremal contractions of $Z$ are of fiber type, so one can apply Theorem \ref{main2} whenever $\dim Z \geq 4$. More generally, for Fano manifolds $Z_i$ whose extremal contractions are of fiber type, the product $Z_1 \times \cdots \times Z_n$ is again a Fano manifold with all extremal contractions of fiber type (see e.g., \cite[Lemmas 4.1 and 4.4]{Dr16}). We also refer to \cite{Dr16, Ou18} for more such examples.
\end{example}

\begin{example}\label{compl} Let $\PP := \PP^{n_1} \times \cdots \times \PP^{n_k}$ be a product of projective spaces such that $\dim \PP = \sum n_i \geq 4$. Let $X$ be a general Calabi--Yau complete intersection of $m$ ample divisors in $\PP$ with $m \leq \min\{n_i\}$. One can view $X$ as a smooth member of $|-K_Z|$, where $Z$ is a general Fano complete intersection of $m-1$ ample divisors in $\PP$. Clearly,
\[ \dim Z = \dim \PP - (m-1) \geq \dim \PP + 1 - \min\{n_i\}. \]
In particular, all extremal contractions of $Z$ are of fiber type. Thus, one can apply Theorem \ref{main2} to this $X$, which covers the results from \cite{Ya21} and \cite{CO15, Og14, Og18, Sk17}. We note that the arguments from \cite{CO15, Ya21} are of independent interest which is a combination of the minimal model program and the classical theory of Coxeter groups.
\end{example}

%\begin{example} Let $n$ and $N$ be two positive integers. Let $Z$ be a general complete intersection of $n-1$ hypersurfaces of multidegree $(1, \dots, 1)$ in $(\PP^n)^N = \PP^n \times \cdots \times \PP^n$. Then $Z$ is a Fano manifold and $\Nef((\PP^n)^N) \cong \Nef(Z)$. Since $\dim Z = n(N-1)+1 = \dim (\PP^n)^{N-1} + 1$, every extremal contraction of $Z$ are of fiber type. A general member $X$ of $|-K_Z|$ is a smooth hypersurface in $Z$ of multidegree $(2, \dots, 2)$, thus, one can apply Theorem \ref{main2} whenever $\dim X \geq 3$. Note that $X$ can be viewed as a general complete intersection of $n-1$ hypersurfaces of multidegree $(1, \dots, 1)$ and one hypersurface of multidegree $(2, \dots, 2)$ in $(\PP^n)^N$, so we cover the results from \cite{CO15, Og14, Og18, Sk17}.                                      \end{example}

\begin{remark}\label{htsec3} Let $X$ be a general complete intersection of $5$ hypersurfaces of bidegree $(1, 1)$ in $\PP^4 \times \PP^4$. Then $X$ can be viewed as a general anti-canonical hypersurface in $Z$, where $Z$ a general complete intersection of $4$ hypersurfaces of bidegree $(1, 1)$ in $\PP^4 \times \PP^4$. Note that in this case, $Z$ admits divisorial contractions so that Theorem \ref{main2} cannot be applied. In fact, $X$ may admit non-trivial minimal models other than itself. Nevertheless, it is shown that $X$ admits only finitely many minimal models up to isomorphism which are all smooth (\cite[Section 3]{HT18}; see also \cite{LW21}), so the general results Propositions \ref{mz4.1} and \ref{techcoh} can still be applied to deduce the movable cone conjecture for $X$.
\end{remark}

\begin{remark} Consider $Z := Z_1 \times \cdots \times Z_N$ a product of Fano manifolds $Z_i$ with Picard number one. Denote by $n_i$ the dimension of $Z_i$ and by $r_i$ the index of $Z_i$, that is, the largest integer such that $-K_{Z_i} = r_i H_i$ for some ample divisor $H_i\in \Pic(Z_i)$. It is well-known that $r_i \leq n_i + 1$. Moreover, $r_i = n_i + 1$ if and only if $Z_i \cong \PP^{n_i}$ (\cite[Page 32]{KO73}); and $r_i = n_i$ if and only if $Z_i \cong \mathcal{Q}^{n_i}$ (\cite[Page 37]{KO73}), where $\mathcal{Q}^{n_i} \subset \PP^{n_i+1}$ is the smooth hyperquadric of dimension $n_i$.

In what follows, we assume that the linear system $|H_i|$ is base-point free for every $i$ for simplicity. Without loss of generality, we may assume that
\[ n_1 \leq n_2 \leq \cdots \leq n_N. \]
Consider a general complete intersection $X$ of $m$ ample divisors in $Z$ with trivial canonical divisor. Then $X$ is a smooth Calabi--Yau manifold in the strict sense. We note that for every $i = 1, \dots, N$,
\[ m \leq r_i \leq n_i + 1, \]
and hence
\[ \dim X = \dim Z - m \geq \dim Z - n_i - 1. \]
On the other hand,
\[ \dim Z_1 \times \cdots \widehat{Z_i} \times \cdots \times Z_N = \dim Z - n_i \leq \dim Z - n_1. \]
If $\dim X > \dim Z - n_i$, then the contraction of $X$ induced by
\[ Z \to Z_1 \times \cdots \widehat{Z_i} \times \cdots \times Z_N \]
is of fiber type. In particular, if $\dim X > \dim Z - n_1$, then all contractions of $X$ are of fiber type and $X$ admits the unique minimal model itself up to isomorphism.

Consider the case where $\dim X \leq \dim Z - n_i$. This implies that
\[ n_i \leq m \leq n_i + 1, \]
and thus, either $m = n_i$ or $m = n_i + 1$. It is not difficult to see that $X$ is a complete intersection of $m$ ample hypersurfaces in the product space                                                      \[ Z = (\PP^{m-1})^k \times (\PP^{m})^j \times (\mathcal{Q}^m)^{i-j-k} \times Z_{i+1}\times \cdots \times Z_N \]                                             with $\dim Z_{i+1} > m$. Notice that $X$ can be regarded as a smooth member of $|-K_Y|$, where $Y$ is a smooth Fano manifold which is a complete intersection of $m-1$ ample hypersurfaces in $Z$. If $k = 0$, i.e., $Z = (\PP^{m})^j \times (\mathcal{Q}^m)^{i-j} \times Z_{i+1}\times \cdots \times Z_N$, then clearly every contraction of $Y$ is of fiber type, and therefore, $X$ has the unique minimal model itself up to isomorphism by Theorem \ref{main2}. When $k \geq 1$, $Y$ may admit divisorial contractions which makes the problem more complicated. In Theorem \ref{cipp}, we handle a special case where $Z$ is a self-product of a projective space. The problem remains open in general. 
\end{remark}

%\medskip

\section{The construction for a special case}\label{spec}

In this section, we construct the minimal models for a special case, that is, the case where $N = 3$. This might help readers to understand the general construction presented in the next section.

\begin{example} Consider the general complete intersection $X$ given by $n + 1$ hypersurfaces $D_0, D_1, \dots, D_n$ of tridegree $(1, 1, 1)$ in $\PP^n_1\times \PP^n_2 \times \PP^n_3$, which is a smooth Calabi--Yau manifold in the strict sense of dimension $2n - 1$ (Proposition \ref{co3.1}). We assume that $\dim X \geq 3$.

Let $\mathbf{x}^i:= [x^i_0 : x^i_1 : \cdots : x^i_n]$ be the homogeneous coordinates of $\PP^n_i$, $i = 1, 2, 3$. We can write these hypersurfaces $D_j$ of tridegree $(1, 1, 1)$ by the following equations
\[ \sum_{0 \leq m_1, m_2, m_3 \leq n}a^{m_1 m_2 m_3}_jx^1_{m_1} x^2_{m_2} x^3_{m_3} = 0. \]

We add one more projective space $\PP^n_0 = \PP^n$ with homogeneous coordinates $\mathbf{x}^0:= [x^0_0 : x^0_1 : \cdots : x^0_n]$ and  define
\[ b^{m_0 m_1 m_2 m_3}:= a_{m_0}^{m_1 m_2 m_3} \]
for each $0\leq m_0, m_1, m_2, m_3\leq n$. Consider the following homogeneous form
\[ \sum_{0\leq m_0, m_1, m_2, m_3\leq n}b^{m_0 m_1 m_2 m_3}x^0_{m_0} x^1_{m_1} x^2_{m_2} x^3_{m_3} \]
in the product space $\PP^n_0 \times \PP^n_1 \times \PP^n_2 \times \PP^n_3$. We define the complete intersection $X_3$ of the following $n + 1$ hypersurfaces $D^3_{m_3}\,(m_3 = 0, 1, \dots, n)$ given by
\[ \sum_{0 \leq m_0, m_1, m_2 \leq n}b^{m_0 m_1 m_2 m_3}x^0_{m_0} x^1_{m_1} x^2_{m_2} = 0 \]
in the space $\PP^n_0 \times \PP^n_1 \times \PP^n_2$. Similarly, we define the complete intersection $X_2$ of the following $n + 1$ hypersurfaces $D^2_{m_2}\,(m_2 = 0, 1, \dots, n)$ given by
\[ \sum_{0 \leq m_0, m_1, m_3 \leq n}b^{m_0 m_1 m_2 m_3}x^0_{m_0} x^1_{m_1} x^3_{m_3} = 0 \]
in $\PP^n_0 \times \PP^n_1 \times \PP^n_3$, the complete intersection $X_1$ of the following $n + 1$ hypersurfaces $D^1_{m_1}\,(m_1 = 0, 1, \dots, n)$ given by
\[ \sum_{0 \leq m_0, m_2, m_3 \leq n}b^{m_0 m_1 m_2 m_3}x^0_{m_0} x^2_{m_2} x^3_{m_3} = 0 \]
in $\PP^n_0 \times \PP^n_2 \times \PP^n_3$, and the complete intersection $X_0$ of the following $n + 1$ hypersurfaces $D^0_{m_0}\,(m_0 = 0, 1, \dots, n)$ given by
\[ \sum_{0 \leq m_1, m_2, m_3 \leq n}b^{m_0 m_1 m_2 m_3}x^1_{m_1} x^2_{m_2} x^3_{m_3} = 0 \]
in $\PP^n_1 \times \PP^n_2 \times \PP^n_3$. We note that $D_j$'s are exactly $D^0_i$'s and $X = X_0$. Moreover, we can choose $X$ general \textit{a priori} such that $X_{\ell}$ is smooth for each $\ell = 0$, $1$, $2$, $3$.

We can also write the defining equations of $D_j$ for $j = 0, \dots, n$ as
\[ \sum_{0\leq k \leq n}\left(\sum_{0\leq m_2, m_3 \leq n} a^{k m_2 m_3}_j x^2_{m_2} x^3_{m_3} \right) x^1_k = 0. \]
Set the matrix
\[ A_1:= \left(\sum_{0\leq m_2, m_3 \leq n} a^{k m_2 m_3}_j x^2_{m_2} x^3_{m_3} \right)_{j, k}, \]
then
\[ A_1\cdot {}^t(x^1_0, x^1_1, \cdots, x^1_n) = 0 \]
gives equations of $n + 1$ hypersurfaces $D_j$, where ${}^t$ denotes the transpose as usual. We define the determinantal hypersurface
\[ W_1:= \left(\det(A_1) = 0 \right) \subset \PP^n_2 \times \PP^n_3. \]
For the complete intersection $X_1$, we can write $n + 1$ hypersurfaces $D^1_{m_1}$ as
\[ \sum_{0\leq m_0 \leq n} \left(\sum_{0 \leq m_2, m_3 \leq n} b^{m_0 m_1 m_2 m_3} x^2_{m_2} x^3_{m_3} \right) x^0_{m_0} = 0. \]
If we set the matrix
\[ A^{\prime}_1:= \left(\sum_{0 \leq m_2, m_3 \leq n} b^{m_0 m_1 m_2 m_3} x^2_{m_2} x^3_{m_3} \right)_{m_1, m_0}, \]
then
\[ A^{\prime}_1 \cdot {}^t(x^0_0, x^0_1, \dots, x^0_n) = 0 \]
gives equations of $n + 1$ hypersurfaces $D^1_{m_1}$. We define the determinantal hypersurface
\[ W^{\prime}_1:= (\det(A^{\prime}_1) = 0) \subset \PP^n_2 \times \PP^n_3. \]
Since by definition
\[ b^{m_0 m_1 m_2 m_3} = a_{m_0}^{m_1 m_2 m_3}, \]
we know that indeed
\[ A^{\prime}_1 = {}^tA_1, \]
and hence
\[ W^{\prime}_1 = W_1. \]
Moreover, we have natural projections $\alpha_1: X \to W_1$ and $\beta_1: X_1 \to W_1$, which further induce the birational map $X \dashrightarrow X_1$ and the following commutative diagram
\[ \xymatrix{
  X \ar@{-->}[rr]^{} \ar[dr]_{\alpha_1}
                &  &    X_1 \ar[dl]^{\beta_1}    \\
                & W_1                 }.
\]
Similarly, we can define the determinantal hypersurfaces $W_2 \subset \PP^n_1 \times \PP^n_3$ and $W_3 \subset \PP^n_1 \times \PP^n_2$, and moreover, we have the following commutative diagrams
\[ \xymatrix{
  X \ar@{-->}[rr]^{} \ar[dr]_{\alpha_2}
                &  &    X_2 \ar[dl]^{\beta_2}    \\
                & W_2                 },
\]
\[ \xymatrix{
  X \ar@{-->}[rr]^{} \ar[dr]_{\alpha_3}
                &  &    X_3 \ar[dl]^{\beta_3}    \\
                & W_3                 }.
\]
In fact, one can obtain analogous birational maps $X_j \dashrightarrow X_i$ for every pair of $i \neq j \in \{0, 1, 2, 3 \}$, where $X_0 = X$.

We will show in Section \ref{prof3} that such diagrams exhaust all the flops of $X$ (under certain assumptions). Therefore, we obtain all the minimal models of $X$ up to isomorphism by a result of Kawamata (\cite{Ka08}) and the symmetry of our construction.
\end{example}

%\medskip

\section{Proof of Theorem \ref{cipp}}\label{prof3}

In this section, we prove the last main result of this paper (Theorem \ref{cipp}).
%\begin{theorem}[$=$ Theorem \ref{cipp}]\label{corol} Let $X$ be a general Calabi--Yau complete intersection of $n+1$ hypersurfaces of multidegree $(1,\dots,1)$ in $P^n(N):= \PP^n_1 \times \cdots \times \PP^n_N$ of dimension $\geq 3$. Then $X$ has finitely many minimal model up to isomorphism and there exists a rational polyhedral cone which is a fundamental domain for the action of $\Bir(X)$ on $\Move(X)$.                                             \end{theorem}
The argument here is inspired by \cite[Section 3]{HT14} (see also \cite[Section 2]{Fr01}, \cite[Section 3.2]{Li16}, \cite[Section 3]{HT18}, \cite{LW21}). The method traces back to Cayley's work on determinantal quartic surfaces \cite{Ca69}. See \cite{FGGL13} for more details on the history and \cite{Og15, Og17} for related works.

\begin{setup}\label{mysetup} We write $P := P^n(N) := \PP^n_1 \times \cdots \times \PP^n_N$ for convenience, and always assume that $N \geq 2$. Let
\[ \pi_i: P \to \PP^n_1 \times \cdots \times \widehat{\PP^n_i} \times \cdots \times \PP^n_N =: Q_i \cong P^n(N-1) \]
be the natural projection. Let us note that $\dim P = nN$, while $\dim Q_i = n(N-1)$ for $i = 1, \dots, N$. We denote by $H_i$ the pullback of a hyperplane in $\PP^n_i$. Then clearly
\[ \Pic(P) = \langle H_1, \dots, H_N\rangle, \]
and
\[ \Eff(P) = \Movo(P) = \Nef(P) = \Cone(H_1, \dots, H_N), \]
where ``Cone'' denotes the convex cone generated by these elements.

Assume that the general complete intersection $X$ is given by $n+1$ hypersurfaces $D_0, D_1, \dots, D_n$ of multidegree $(1,\dots,1)$ in $P$. It follows that $X$ is a smooth Calabi--Yau manifold in the strict sense of dimension $n(N-1)-1$, and the nef cone
\[ \Nef(X)\cong \Nef(P) = \Cone(H_1, \dots, H_N) \]
is rational polyhedral from Proposition \ref{co3.1}. Moreover,
\[ \Nefe(X) = \Nefp(X) = \Nef(X). \]
\end{setup}

\begin{construction}\label{constr} Let $\mathbf{x}^i:= [x^i_0 : x^i_1 : \cdots : x^i_n]$ be the homogeneous coordinates of $\PP^n_i$, $i = 1, \dots, N$. We use the following notation for convenience:
\[ \II:= \{(m_1, m_2, \dots, m_N)\,|\,0\leq m_k \leq n \ \mathrm{for \ each} \ k\}. \]
%For an index $I \in \II$, set $\bfx_I:= x^1_{m_1} x^2_{m_2} \cdots x^N_{m_N}$, and $a^I_j:= a^{m_1\cdots m_N}_j \in \CC$, where $j\in \{0, 1, \dots, n\}$ is anthor independent index marking $n+1$ hypersurfaces. \sum_{I\in \II}a^I_j\bfx_I
We can write these hypersurfaces $D_j$ of multidegree $(1, \dots,1)$ by the following equations
\[ \sum_{(m_1, m_2, \dots, m_N)\in \II}a^{m_1\cdots m_N}_jx^1_{m_1} x^2_{m_2} \cdots x^N_{m_N} = 0. \]

%\begin{example} When $N = 2$, the $n+1$ hypersurfaces $D_j$ of multidegree $(1, \dots,1)$ are given by the equations     \[ \sum_{0\leq m_1, m_2 \leq n} a_j^{m_1 m_2} x^1_{m_1}x^2_{m_2} = 0 \]

%When $n = 1$, the two hypersurfaces $D_j$ of multidegree $(1, \dots,1)$ are given by the equations                         \[ \sum_{0\leq m_k\leq 1}a_j^{m_1\cdots m_N} x^1_{m_1} x^2_{m_2} \cdots x^N_{m_N} = 0 \] \end{example}

We add one more projective space $\PP^n_0 = \PP^n$ with homogeneous coordinates $\mathbf{x}^0:= [x^0_0 : x^0_1 : \cdots : x^0_n]$ and  define
\[ b^{m_0 m_1 m_2 \dots m_N}:= a_{m_0}^{m_1 m_2 \dots m_N} \]
for each $(m_1, m_2, \dots, m_N)\in \II$ and $0\leq m_0\leq n$. Consider the following homogeneous form
\[ \sum_{0\leq m_k \leq n,\,0\leq k \leq N}b^{m_0 m_1\cdots m_N}x^0_{m_0} x^1_{m_1} x^2_{m_2} \cdots x^N_{m_N}
\]
in the product space $\PP^n_0 \times \PP^n_1 \cdots \times \PP^n_N = \PP^n_0 \times P \cong P^n(N+1)$.

For each $\ell \in \{0, 1, \dots, N\}$, we define the complete intersection $X_{\ell}$ of the following $n+1$ hypersurfaces $D^{\ell}_{m_{\ell}}\,(m_{\ell}=0, \dots, n)$ given by
\[ \sum_{(m_0, m_1, \dots, m_{\ell-1}, m_{\ell+1}, \dots, m_N)\in \II}b^{m_0 m_1\cdots m_N}x^0_{m_0} x^1_{m_1} \cdots x^{\ell-1}_{m_{\ell-1}} x^{\ell+1}_{m_{\ell+1}} \cdots x^N_{m_N} = 0
\]
in the space $\PP^n_0 \times \cdots \times \widehat{\PP^n_{\ell}} \times \cdots \times \PP^n_N \cong P$.
\end{construction}

Let us note that $D_j$'s are exactly $D^0_j$'s and $X = X_0$. Moreover, we can choose $X$ general\footnote{``General'' means lying in some Zariski-dense open subset of the parameter space.} \textit{a priori} such that

\begin{assumption}\label{assump} $X_{\ell}$ is smooth for each $\ell = 0, 1, \dots, N$.
\end{assumption}

%In what follows, we will work under this assumption.
By Proposition \ref{co3.1}, these $X_{\ell}$ are also smooth Calabi--Yau manifolds in the strict sense satisfying $\Nef(X_{\ell}) \cong \Nef(P)$, and moreover,
\[ \Nefe(X_{\ell}) = \Nefp(X_{\ell}) = \Nef(X_{\ell}). \]

For each $i \in \{1, 2, \dots, N\}$, we can also write the defining equations of $D_j$ for $j = 0, \dots, n$ as the zero locus of
\[ \sum_{0\leq k \leq n}\left(\sum_{\substack{(m_1, \dots, m_{i-1}, \\ m_{i+1}, \dots, m_N)}} a^{m_1\cdots m_{i-1} k m_{i+1} \cdots m_N}_jx^1_{m_1} \cdots x^{i-1}_{m_{i-1}} x^{i+1}_{m_{i+1}} \cdots x^N_{m_N}\right) x^i_k. \]
Set the matrix
\[ A_i:= A_i(\mathbf{x}^1, \dots, \mathbf{x}^{i-1}, \mathbf{x}^{i+1}, \dots, \mathbf{x}^N):= \]
\[ \left(\sum a^{m_1\cdots m_{i-1} k m_{i+1} \cdots m_N}_jx^1_{m_1} \cdots x^{i-1}_{m_{i-1}} x^{i+1}_{m_{i+1}} \cdots x^N_{m_N}\right)_{j, k}, \]
then
\[ A_i\cdot {}^t(x^i_0, x^i_1, \cdots, x^i_n) = 0 \]
gives equations of $n+1$ hypersurfaces $D_j$, where ${}^t$ denotes the transpose as usual. We define the determinantal hypersurface
\[ W_i:= (\det(A_i) = 0) \subset Q_i = \PP^n_1 \times \cdots \times \widehat{\PP^n_i} \times \cdots \times \PP^n_N. \]
Note that the singularities of $W_i$ correspond to the points satisfying $\rank A_i \leq n-1$. Similarly, for the complete intersections $X_i\,(1\leq i \leq N)$, we can write $n+1$ hypersurfaces $D^i_{m_i}$ as the zero locus of
\[ \sum_{0\leq m_0 \leq n} \left(\sum_{\substack{(m_1, \dots, m_{i-1}, \\ m_{i+1}, \dots, m_N)}} b^{m_0 m_1\cdots m_{i-1} m_i m_{i+1} m_N} x^1_{m_1} \cdots x^{i-1}_{m_{i-1}} x^{i+1}_{m_{i+1}} \cdots x^N_{m_N} \right) x^0_{m_0}. \]
If we set the matrix
\[ A^{\prime}_i:= A^{\prime}_i(\mathbf{x}^1, \dots, \mathbf{x}^{i-1}, \mathbf{x}^{i+1}, \dots, \mathbf{x}^N):= \]
\[ \left(\sum b^{m_0 m_1\cdots m_{i-1} m_i m_{i+1} m_N} x^1_{m_1} \cdots x^{i-1}_{m_{i-1}} x^{i+1}_{m_{i+1}} \cdots x^N_{m_N} \right)_{m_i, m_0}, \]
then
\[ A^{\prime}_i \cdot {}^t(x^0_0, x^0_1, \dots, x^0_n)= 0 \]
gives equations of $n+1$ hypersurfaces $D^i_{m_i}$. We define the determinantal hypersurface
\[ W^{\prime}_i:= (\det(A^{\prime}_i) = 0) \subset Q_i = \PP^n_1 \times \cdots \times \widehat{\PP^n_i} \times \cdots \times \PP^n_N. \]
Since by definition
\[ b^{m_0 m_1 m_2 \dots m_N} = a_{m_0}^{m_1 m_2 \dots m_N}, \]
we know that indeed
\[ A^{\prime}_i = {}^tA_i, \]
and hence
\[ W^{\prime}_i = W_i \]
for each $i \in \{1, 2, \dots, N\}$. Moreover, we have natural projections $\alpha_i: X \to W_i$ and $\beta_i: X_i \to W_i$, and following diagram
\[ \xymatrix{
  X \ar[dr]_{\alpha_i}
  &  &    X_i \ar[dl]^{\beta_i}    \\
  & W_i  }.
\]

Let us remark that $W_i$ is a hypersurface of multidegree $(n+1, \cdots, n+1)$ in the space $Q_i = \PP^n_1 \times \cdots \times \widehat{\PP^n_i} \times \cdots \times \PP^n_N$ for each $i\in \{1, \dots, N\}$. So
\[ \dim W_i = n(N-1) - 1 = \dim X = \dim X_i, \]
and moreover,
\[ \rho(W_i) = N - 1 = \rho(X) - 1 = \rho(X_i) - 1 \] 
by the Noether--Lefschetz theorem for hypersurfaces, or equivalently, the relative Picard number $\rho(X/W_i) = \rho(X_i/W_i) = 1$.

The projection $\alpha_i: X \to W_i$ is the restriction of $\pi_i: P \to Q_i$ on $X$. Each fiber of $\alpha_i$ is a certain projective space (including a single point), so $\alpha_i$ admits connected fibers. Moreover, $\alpha_i: X \to W_i$ is isomorphic outside the proper subset of $W_i$ which consists of points satisfying $\rank A_i \leq n-1$, so $\alpha_i$ is birational. This implies that $\alpha_i: X \to W_i$ is the contraction corresponding to the codimension one face $\sum_{r\neq i}H_r$ under the natural isomorphism $\Nef(X) \cong \Nef(P) = \Cone(H_1, \dots, H_N)$. Similar assertion holds for $\beta_i: X_i \to W_i$.

For each pair of $i \neq j \in \{0, 1, \dots, N \}$, we can define the determinantal hypersurface $W_{ji}$ with similar properties discussed as above and the following diagram of birational contractions
\[ \xymatrix{
  X_j \ar[dr]_{}
  &  &    X_i \ar[dl]^{}    \\
  & W_{ji}   }.
\]
Here $X_0 = X$ and $W_{0i} = W_i$ for each $i \in \{1, \dots, N \}$.
%Again we can choose $X$ to be general \textit{a priori}, such that, besides Assumption \ref{assump},

%\begin{assumption}\label{assump2} These $W_{ij}$ are not linearly isomorphic mutually.                                 \end{assumption}

%Here a \textit{linear isomorphism} means a isomorphism induced by an automorphism of $P(N-1)$. Indeed, the dimension of              \[ \Aut(P(N-1)) = \mathrm{PGL}(n+1, \CC)^{N-1}\rtimes \mathfrak{S}_{N-1} \]                                              is equal to $(n^2 + 2n)\cdot(N-1)$, while we can choose $X$ in a  parameter space of dimension $(n+1)^2\cdot N$.

%In what follows, we will work under Assumptions \ref{assump} and \ref{assump2}.

\begin{lemma}\label{alter} Two birational contractions $\alpha_i: X \to W_i$ and $\beta_i: X_i \to W_i$ are either simultaneously divisorial or simultaneously small.
\end{lemma}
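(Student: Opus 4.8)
The plan is to exploit the transpose symmetry $A'_i = {}^t\!A_i$ to show that the fibers of $\alpha_i$ and $\beta_i$ over an arbitrary point of $W_i$ have the same dimension, whence the dichotomy follows immediately. First I would describe the fibers explicitly. A point of $X$ lying over $w \in W_i$ is a point $\mathbf{x}^i \in \PP^n_i$ satisfying $A_i(w)\cdot{}^t\mathbf{x}^i = 0$, so the fiber $\alpha_i^{-1}(w)$ is the projectivized kernel $\PP(\ker A_i(w))$. Likewise, a point of $X_i$ over $w$ is an $\mathbf{x}^0 \in \PP^n_0$ with $A'_i(w)\cdot{}^t\mathbf{x}^0 = 0$, so $\beta_i^{-1}(w) = \PP(\ker A'_i(w)) = \PP(\ker {}^t\!A_i(w))$, using $A'_i = {}^t\!A_i$. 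Since the rank is invariant under transpose, $\dim\ker A_i(w) = \dim\ker{}^t\!A_i(w) = (n+1) - \rank A_i(w)$ for every $w$, and therefore
\[ \dim \alpha_i^{-1}(w) = \dim \beta_i^{-1}(w) = n - \rank A_i(w) \]
for all $w \in W_i$. In particular both maps are one-to-one exactly over the dense open locus $\{\rank A_i = n\}$, so each is a genuine birational contraction onto $W_i$.

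Next I would compare the exceptional loci. Stratifying $W_i$ by the value $r := \rank A_i(w) \le n-1$, the rank strata are literally the same subvarieties of $W_i$ whether computed from $A_i$ or from ${}^t\!A_i$, and over the stratum $\{\rank A_i = r\}$ every fiber of $\alpha_i$ (resp. $\beta_i$) is a $\PP^{n-r}$. Hence $\Exc(\alpha_i)$ and $\Exc(\beta_i)$ fiber over the same base with fibers of equal dimension stratum by stratum, which gives $\dim\Exc(\alpha_i) = \dim\Exc(\beta_i)$ after taking the maximum over $r$. Because $\dim X = \dim X_i = \dim W_i$, the two exceptional loci then have the same codimension in their respective sources. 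Consequently $\Exc(\alpha_i)$ contains a divisor if and only if $\Exc(\beta_i)$ does; that is, $\alpha_i$ and $\beta_i$ are simultaneously divisorial or simultaneously small, as desired. The same reasoning applies verbatim to each of the contractions $X_j \to W_{ji}$ and $X_i \to W_{ji}$, since those are again governed by a matrix and its transpose.

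The one point requiring care — and the main, if mild, obstacle — is the identification $\Exc(\alpha_i) = \alpha_i^{-1}(\{\rank A_i \le n-1\})$, together with its analogue for $\beta_i$. I would justify this by noting that away from the rank-drop locus $\alpha_i$ is an isomorphism, and that the genericity built into Assumptions~\ref{assump} and~\ref{assump2} keeps $W_i$ normal: its singular locus is the rank $\le n-1$ locus, which has the expected high codimension, so by Serre's criterion the Cohen--Macaulay hypersurface $W_i$ is $R_1 + S_2$, hence normal, and $\alpha_i$ fails to be a local isomorphism precisely over that locus. Granting this, the argument is purely a dimension count for equidimensional fibrations over a common base, and no further computation is needed.
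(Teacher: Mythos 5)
Your proposal is correct and takes essentially the same route as the paper's proof: both identify $\Exc(\alpha_i)$ and $\Exc(\beta_i)$ with the preimages of the common rank-drop locus $\{\rank A_i \leq n-1\} = \{\rank A_i' \leq n-1\}$ in $W_i$ and then invoke the transpose symmetry $A_i' = {}^tA_i$, which forces equal fiber dimensions and hence equal dimensions of the two exceptional loci. Your extra details --- the projectivized-kernel description of the fibers, the rank stratification, and the normality of $W_i$ justifying $\Exc(\alpha_i) = \alpha_i^{-1}(\{\rank A_i \leq n-1\})$ --- simply make explicit what the paper's two-sentence proof (and the discussion preceding the lemma) leaves implicit.
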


\begin{proof} The exceptional locus of $\alpha_i$ (resp. $\beta_i$) is the preimage of the subset of $W_i$ which consists of points satisfying $\rank A_i \leq n-1$ (resp. $\rank A^{\prime}_i \leq n-1$). The assertion follows from the fact that $A_i$ is the transpose of $A^{\prime}_i$.
\end{proof}

When $\alpha_i: X \to W_i$ and $\beta_i: X_i \to W_i$ are simultaneously small, we obtain the induced isomorphism in codimension one $\phi_i = \beta_i^{-1}\circ\alpha_i: X \dashrightarrow X_i$ which fits to the following commutative diagram
\[ \xymatrix{
  X \ar@{-->}[rr]^{\phi_i} \ar[dr]_{\alpha_i}
  &  &    X_i \ar[dl]^{\beta_i}    \\
  & W_i  }.
\]
Similarly, for each pair of $i \neq j \in \{0, 1, \dots, N \}$, we have the induced birational map $\phi_{ji}$:
\[ \xymatrix{
  X_j \ar@{-->}[rr]^{\phi_{ji}} \ar[dr]_{}
  &  &    X_i \ar[dl]^{}    \\
  & W_{ji}  },
\]
where $X_0 = X$ and $\phi_{0i} = \phi_i$ for each $i \in \{1, \dots, N \}$. Again we can choose $X$ general \textit{a priori} such that  besides Assumption \ref{assump},

\begin{assumption}\label{assump2} For each pair of $i \neq j \in \{0, 1, \dots, N \}$, the birational map $\phi_{ji}$ is not an isomorphism over $W_{ji}$.
\end{assumption}

Indeed, suppose that $\phi_{ji}$ is an isomorphism over $W_{ji}$ for some pair $i \neq j$. Take $k \in \{0, 1, \dots, N \} \setminus \{i, j\}$. Such $k$ always exists because $N \geq 2$. Consider the following commutative diagram:
\[ \xymatrix{
  X_k \ar@{-->}[rr]^{\phi_{kj}} \ar[dr]_{}
  &  &    X_j \ar[dl]^{} \ar[rr]^{\phi_{ji}} \ar[dr]_{} &  &    X_i \ar[dl]^{} \ar[dr]_{} \ar@{-->}[rr]^{\phi_{ik}}
  &  &    X_k  \ar[dl]^{}  \\
  & W_{kj} \ar@{-->}[rr]^{} & & W_{ji} \ar@{-->}[rr]^{} & & W_{ik}       },
\]
where $W_{kj} \dashrightarrow W_{ji}$ and $W_{ji} \dashrightarrow W_{ik}$ are naturally induced birational maps. Their composite $W_{kj} \to W_{ik}$ is an isomorphism (because $\phi_{ji}$ is an isomorphism). The induced birational map $W_{kj} \dashrightarrow X_j \to W_{ji}$ is given as follows: for a point
\[ \mathbf{a} := (\mathbf{a}^0, \cdots, \widehat{\mathbf{a}^k}, \cdots, \widehat{\mathbf{a}^j}, \cdots, \mathbf{a}^N) \in W_{kj}
\]
satisfying $\mathrm{rank}\,A_{kj}(\mathbf{a}) = n$, we have
\[ \mathbf{a} \mapsto (\mathbf{a}^0, \cdots, s(\mathbf{a}), \cdots, \widehat{\mathbf{a}^j}, \cdots, \mathbf{a}^N) \mapsto (\mathbf{a}^0, \cdots, \widehat{\mathbf{a}^i}, \cdots, \widehat{\mathbf{a}^j}, \cdots, \mathbf{a}^N),
\]
where $s(\mathbf{a}) \in \PP^n_k$ denotes the point corresponding to the solution of the system of linear equations given by $A_{kj}(\mathbf{a})$. The induced birational map $W_{ji} \dashrightarrow X_i \to W_{ik}$ is similar. By this way, we obtain the isomorphism $W_{kj} \to W_{ik}$, which further induces a specific $\CC$-algebra isomorphism $\det(A_{kj}) \to \det(A_{ik})$. This gives a proper closed subvariety of the parameter space of $X$. Therefore, we may choose $X$ outside these finitely many subvarieties indexed on pairs $i \neq j$ such that Assumption \ref{assump2} is satisfied.

Here and hereafter, we will work under Assumptions \ref{assump} and \ref{assump2}.

\begin{lemma}\label{flopx} When $\alpha_i: X \to W_i$ and $\beta_i: X_i \to W_i$ are simultaneously small, the map $\phi_i: X \dashrightarrow X_i$ is the flop of $\alpha: X \to W_i$.
\end{lemma}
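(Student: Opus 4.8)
The plan is to verify directly that $\phi_i$ satisfies the defining property of a flop, rather than invoking any abstract existence theorem. First I would record the structural facts already available: since $X$ is a Calabi--Yau variety, $K_X$ is numerically trivial, so the small contraction $\alpha_i \colon X \to W_i$ is $K_X$-trivial, i.e.\ a flopping contraction, and since $\rho(X/W_i) = 1$ it is elementary. On the other side, $\beta_i \colon X_i \to W_i$ is small by the hypothesis of the lemma, $X_i$ is smooth by Assumption \ref{assump} hence $\QQ$-factorial, and $\rho(X_i/W_i)=1$. The induced map $\phi_i = \beta_i^{-1}\circ\alpha_i$ is an isomorphism in codimension one, and by Assumption \ref{assump2} it is \emph{not} an isomorphism.

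Next I would fix an $\alpha_i$-ample divisor $H$ on $X$ and consider its strict transform $H_i := (\phi_i)_\ast H$ on $X_i$. Because $\phi_i$ is an isomorphism in codimension one, the pushforward $(\phi_i)_\ast$ identifies the one-dimensional relative N\'eron--Severi spaces $N^1(X/W_i)_\RR \cong N^1(X_i/W_i)_\RR$, and $H_i$ is a nonzero class therein, hence either $\beta_i$-ample or $\beta_i$-anti-ample. The crux is to exclude the former. Suppose $H_i$ were $\beta_i$-ample. Then $X$ and $X_i$ would be the relative ample models of $H$ and $H_i$ over $W_i$, that is, $X \cong \Proj_{W_i}\bigoplus_{m\ge 0}\alpha_{i\ast}\OO_X(mH)$ and $X_i\cong\Proj_{W_i}\bigoplus_{m\ge 0}\beta_{i\ast}\OO_{X_i}(mH_i)$. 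Since $X$ and $X_i$ are normal and $\phi_i$ is an isomorphism away from a closed subset of codimension $\ge 2$ on each side, the sections of $\OO_X(mH)$ and $\OO_{X_i}(mH_i)$ agree for every $m$, so the two graded $\OO_{W_i}$-algebras coincide; this forces $X \cong X_i$ over $W_i$, contradicting Assumption \ref{assump2}.

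Therefore $H_i$ must be $\beta_i$-anti-ample, so $\phi_i$ carries the $\alpha_i$-ample ray of $N^1(X/W_i)_\RR$ to the $\beta_i$-anti-ample ray of $N^1(X_i/W_i)_\RR$. This is precisely the defining property of the flop of the flopping contraction $\alpha_i \colon X \to W_i$ (equivalently, $X_i$ is the relative ample model of the $\alpha_i$-anti-ample divisor $-H$), so $\phi_i$ is that flop; in particular $K_{X_i} = (\phi_i)_\ast K_X \equiv 0$ reconfirms that $X_i$ is again Calabi--Yau.

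I expect the main obstacle to be the step ruling out $H_i$ being $\beta_i$-ample, which rests on the uniqueness of the relative ample model together with the fact that global (and relative) sections of the relevant reflexive sheaves are insensitive to the codimension-two indeterminacy locus of $\phi_i$. Assumption \ref{assump2} is exactly what is needed to derive the contradiction there, and the hypothesis $\rho(X/W_i)=\rho(X_i/W_i)=1$ is what lets the dichotomy ``ample or anti-ample'' be stated so cleanly.
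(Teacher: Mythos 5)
Your proof is correct and follows essentially the same route as the paper: take an $\alpha_i$-ample divisor $H$, push it forward under the codimension-one isomorphism $\phi_i$, rule out the relatively trivial case using $\rho(X_i/W_i)=1$, rule out the relatively ample case using Assumption \ref{assump2}, and conclude that the strict transform is relatively anti-ample, which is the definition of the flop. The only difference is that you spell out, via the relative ample model $\Proj_{W_i}$ of the section algebra, why a relatively ample strict transform would force $\phi_i$ to be an isomorphism over $W_i$ --- a step the paper simply asserts.
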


\begin{proof} We note that $\rho(X/W_i) = \rho(X_i/W_i) = 1$. Take a divisor $H$ on $X$ which is relatively ample for $\alpha_i: X \to W_i$, and consider its strict transform $H^{\prime}$ under $\phi_i$. If $H^{\prime}$ is relative trivial for $\beta_i$, then $\rho(X_i) = \rho(W_i)$, which is a contradiction. Hence $H^{\prime}$ is either relatively ample or relatively anti-ample for $\beta_i$. If $H^{\prime}$ is relatively ample, then $\phi_i$ is an isomorphism over $W_i$. This contradicts Assumption \ref{assump2}. Therefore, the divisor $H^{\prime}$ is relatively anti-ample for $\beta_i$, and the map $\phi_i: X \dashrightarrow X_i$ is a flop by definition.
\end{proof}

\begin{corollary}\label{flops} For each $\ell \in \{0, 1, \dots, N\}$, all flops of $X_{\ell}$ are among these $X_{\ell}\dashrightarrow X_i$, where $i \in \{0, \dots, \dots, N\} \setminus \{\ell\}$.
\end{corollary}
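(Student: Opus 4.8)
The plan is to combine the simplicial structure of $\Nef(X_\ell)$ with Kawamata's description of small contractions of Calabi--Yau varieties. By Setup \ref{mysetup} and Assumption \ref{assump}, each $X_\ell$ is a smooth Calabi--Yau manifold with $\Nef(X_\ell) \cong \Nef(P) = \Cone(H_1, \dots, H_N)$, a simplicial rational polyhedral cone whose $N$ extremal generators are the pullbacks of the hyperplane classes from the $N$ projective-space factors of $\PP^n_0 \times \cdots \times \widehat{\PP^n_\ell} \times \cdots \times \PP^n_N$. Hence $\Nef(X_\ell)$ has exactly $N$ codimension-one faces, each obtained by discarding one generator, and dropping the generator attached to the factor $\PP^n_i$ produces the facet whose associated elementary contraction is $\alpha\colon X_\ell \to W_{\ell i}$ (this is elementary since $\rho(X_\ell/W_{\ell i}) = 1$). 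This sets up a bijection between the codimension-one faces of $\Nef(X_\ell)$ and the indices $i \in \{0, 1, \dots, N\}\setminus\{\ell\}$, a set of exactly $N$ elements.

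The key input is \cite[Theorem 5.7]{Ka88}, already invoked in the proof of Theorem \ref{main2}: since $X_\ell$ is Calabi--Yau, every small contraction of $X_\ell$ is, up to an automorphism of $X_\ell$, the contraction associated with a codimension-one face of $\Nef(X_\ell)$. In particular, every flopping contraction of $X_\ell$ arises from one of the $N$ facets described above. By Lemma \ref{alter}, applied to $X_\ell$ via the symmetry of Construction \ref{constr}, such a facet-contraction $X_\ell \to W_{\ell i}$ is small exactly when its companion $X_i \to W_{\ell i}$ is small, and in that case the induced map $\phi_{\ell i}\colon X_\ell \dashrightarrow X_i$ is defined. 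Then the analogue of Lemma \ref{flopx} for $X_\ell$ identifies the flop of this small contraction precisely with $\phi_{\ell i}$.

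Putting these together, I would conclude that every flopping contraction of $X_\ell$ corresponds to some index $i \neq \ell$ and that its flop is the map $\phi_{\ell i}\colon X_\ell \dashrightarrow X_i$; hence all flops of $X_\ell$ lie among the maps $X_\ell \dashrightarrow X_i$ with $i \in \{0, 1, \dots, N\}\setminus\{\ell\}$. The one point deserving care is the automorphism ambiguity in \cite[Theorem 5.7]{Ka88}: since $\Aut(X_\ell)$ merely permutes the finitely many codimension-one faces and we have exhibited the flop $\phi_{\ell i}$ for every face that happens to be small, no flop can escape this finite list regardless of the automorphism action. I expect this bookkeeping --- verifying that the correspondence between facets, facet-contractions, and flops is exhaustive rather than only partial --- to be the main, if modest, obstacle.
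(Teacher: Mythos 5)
Your proposal is correct and takes essentially the same route as the paper: the paper's own (much terser) proof of Corollary \ref{flops} reduces the case $\ell = 0$ to Lemma \ref{flopx} and handles general $\ell$ by Lemma \ref{alter} and the symmetry of Construction \ref{constr}, with the facet--contraction correspondence for $\Nef(X_\ell) \cong \Cone(H_1,\dots,H_N)$ and the appeal to \cite[Theorem 5.7]{Ka88} left implicit (they are recorded in Construction \ref{constr} and in the proof of Theorem \ref{main2}, respectively). Your write-up simply makes those implicit ingredients, including the automorphism ambiguity in Kawamata's theorem, explicit.
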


\begin{proof} When $\ell = 0$, that is, $X_0 = X$, this is done in Lemma \ref{flopx}. By the symmetry in Construction \ref{constr} (under Assumptions \ref{assump} and \ref{assump2}) and Lemma \ref{alter}, the same holds for each $\ell$.
\end{proof}

Now we are ready to prove the first part of Theorem \ref{cipp}  concerning the finiteness of minimal models.

\begin{corollary}\label{maincor1} There are only finitely many minimal models of $X$ up to isomorphism $($as abstract varieties$)$.
\end{corollary}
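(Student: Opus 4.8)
The plan is to combine Kawamata's decomposition theorem for birational maps between Calabi--Yau varieties with the closure property already recorded in Corollary \ref{flops}, thereby trapping every minimal model of $X$ inside the explicit finite list $\{X_0, X_1, \dots, X_N\}$. The point is that, once one knows that flopping any $X_\ell$ can only produce another member of this list, the finiteness is forced by the general principle that minimal models of a Calabi--Yau variety are all connected through flops.

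First I would observe that $X$ is itself a minimal model: being a smooth Calabi--Yau manifold, $X$ is $\QQ$-factorial with terminal (indeed no) singularities and $K_X$ numerically trivial, hence nef. Consequently any minimal model $X^{\prime}$ of $X$ is a Calabi--Yau variety isomorphic to $X$ in codimension one, and the marking $X^{\prime}\dashrightarrow X$ is an isomorphism in codimension one. Next, exactly as in the proof of Theorem \ref{main2}, I would invoke Kawamata's theorem (\cite[Theorem 1]{Ka08}): the birational map $X\dashrightarrow X^{\prime}$ factors, up to an isomorphism of $X^{\prime}$, as a finite composite of flops. Thus every isomorphism class of minimal model of $X$ is reached from $X=X_0$ by a finite chain of flops.

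The decisive step is then to run along such a chain and appeal to Corollary \ref{flops}. That corollary asserts that, for each $\ell\in\{0,1,\dots,N\}$, every flop of $X_\ell$ is one of the maps $X_\ell\dashrightarrow X_i$ with $i\neq\ell$, and each target $X_i$ again lies in the list. Hence the finite set $\{X_0,X_1,\dots,X_N\}$ is closed under flops, and starting from $X=X_0$ no chain of flops can ever leave it. Combined with the previous paragraph, this shows that every minimal model of $X$ is isomorphic, as an abstract variety, to one of the $N+1$ spaces $X_0,\dots,X_N$; therefore there are at most $N+1$ minimal models of $X$ up to isomorphism, which is finite.

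The hard part is not in this final assembly but in the closure property it rests upon, namely Corollary \ref{flops} together with Lemma \ref{flopx} and Lemma \ref{alter}: one must know that the determinantal construction genuinely produces \emph{all} the small elementary contractions of each $X_\ell$, so that no unforeseen flop can escape the list, and that the symmetry of Construction \ref{constr} under Assumptions \ref{assump} and \ref{assump2} transports the analysis from $X_0$ to each $X_\ell$. Once that completeness is granted, the only residual care is that Kawamata's theorem is phrased up to automorphism of the target, which is harmless here since minimal models are counted only up to abstract isomorphism.
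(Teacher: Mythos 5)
Your proposal is correct and follows essentially the same route as the paper: Kawamata's theorem that birational Calabi--Yau varieties are connected by flops, combined with the closure of the list $\{X_0,\dots,X_N\}$ under flops from Corollary \ref{flops}, yielding at most $N+1$ minimal models up to isomorphism. Your write-up merely makes explicit the induction along the chain of flops that the paper leaves implicit.
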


\begin{proof} By a result of Kawamata (\cite[Theorem 1]{Ka08}), two birational Calabi--Yau varieties are connected by a sequence of flops up to isomorphism. By Proposition \ref{flops}, $X$ has at most $N+1$ minimal models $X_0 = X$ itself, $X_1, \dots, X_N$ up to isomorphism.
\end{proof}

%\begin{remark} $\mathrm{(1)}$ In view of Construction \ref{constr}, we indeed give an algorithm to find all the minimal models of $X$ up to isomorphism.                                              \medskip $\mathrm{(2)}$ Among these $X_i\, (i = 0, \dots, N)$, some of them might be isomorphic (as abstract varieties).     \end{remark}

\begin{corollary}\label{maincor2} There exists a rational polyhedral fundamental domain for the action of $\Bir(X)$ on $\Move(X) = \Movp(X)$.
\end{corollary}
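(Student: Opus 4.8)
The plan is to verify the hypotheses of Proposition \ref{techcoh} and read off its conclusion verbatim. Recall that this proposition asserts the following: if $X$ is a Calabi--Yau variety such that (i) every sequence of $D$-flops on $X$ terminates, (ii) there are only finitely many minimal models $X_0, \dots, X_k$ up to isomorphism, and (iii) each $X_i$ carries a rational polyhedral cone $\Pi_i \subset \Nefe(X_i)$ with $\Aut(X_i)\ast\Pi_i = \Nefe(X_i)$, then $\Move(X) = \Movp(X)$ and there is a rational polyhedral fundamental domain for the action of $\Bir(X)$ on this common cone. Since the statement to be proved is exactly this conclusion, it suffices to check (i)--(iii).

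For (ii), Corollary \ref{maincor1} already shows that every minimal model of $X$ up to isomorphism is one of $X_0 = X, X_1, \dots, X_N$, so there are only finitely many. For (iii), recall from Setup \ref{mysetup} and Proposition \ref{co3.1} that each $X_\ell$ is a smooth Calabi--Yau manifold with $\Nefe(X_\ell) = \Nefp(X_\ell) = \Nef(X_\ell) \cong \Nef(P) = \Cone(H_1, \dots, H_N)$, which is rational polyhedral. Hence one may simply take $\Pi_i := \Nefe(X_i)$ itself: it is rational polyhedral, and since $\Aut(X_i)$ preserves $\Nefe(X_i)$, one trivially has $\Aut(X_i)\ast\Pi_i = \Nefe(X_i)$. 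Note that $\Pi_i$ is permitted to be non-fundamental in Proposition \ref{techcoh}, which is precisely the situation here.

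The only hypothesis requiring genuine verification is (i), the termination of $D$-flops, and this is supplied by Proposition \ref{mz4.1}. Indeed, $X$ is a smooth Calabi--Yau variety, and by Corollary \ref{maincor1} combined with Assumption \ref{assump} and Proposition \ref{co3.1}, each of its minimal models $X_\ell$ is smooth. As in the proof of Proposition \ref{ka2.3}, the flops in question arise from running a minimal model program for a pair $(X, \varepsilon D)$, where $D$ is a big movable $\RR$-divisor and $\varepsilon > 0$ is chosen small enough that $(X, \varepsilon D)$ is Kawamata log terminal, hence log canonical. Thus $\varepsilon D$ is an effective $\RR$-divisor with at worst log canonical singularities, and Proposition \ref{mz4.1} guarantees that any sequence of such $D$-flops terminates.

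With (i)--(iii) established, Proposition \ref{techcoh} yields $\Move(X) = \Movp(X)$ together with a rational polyhedral fundamental domain for the action of $\Bir(X)$ on it, which is the assertion. I expect no serious obstacle: the explicit construction of the finitely many minimal models and the rationality of their nef cones carry all the weight, and the sole technical point is confirming the applicability of Proposition \ref{mz4.1}, for which the smoothness of every minimal model---itself a consequence of the genericity Assumption \ref{assump} via Proposition \ref{co3.1}---is the essential input.
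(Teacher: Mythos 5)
Your proposal is correct and follows exactly the paper's own route: finiteness of minimal models from Corollary \ref{maincor1}, rational polyhedrality of each $\Nef(X_\ell)\cong\Nef(P)$ (so one may take $\Pi_i=\Nefe(X_i)$), and then Propositions \ref{mz4.1} and \ref{techcoh} to conclude. The only difference is that you spell out the verification of the hypotheses (smoothness of all minimal models via Assumption \ref{assump}, and the fact that the relevant flops come from klt, hence log canonical, pairs $(X,\varepsilon D)$) which the paper leaves implicit.
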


\begin{proof} By Corollary \ref{maincor1}, $X$ admits only finitely many minimal model up to isomorphism (as abstract varieties), each of which is a smooth complete intersection of  hypersurfaces of multidegree $(1, \dots, 1)$ in the product space $P$. Therefore, the nef cone of each minimal model is isomorphic to the rational polyhedral cone $\Nef(P)$. The assertion then follows from Proposition \ref{mz4.1} and \ref{techcoh}.
\end{proof}

Now the proof of Theorem \ref{cipp} is completed. As a direct corollary, we obtain the finite presentation of the birational automorphism group of $X$.

\begin{corollary} The birational automorphism group $\Bir(X)$ of $X$ is finitely presented.
\end{corollary}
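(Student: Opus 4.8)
The plan is to extract finite presentation of $\Bir(X)$ from the rational polyhedral fundamental domain produced in Corollary \ref{maincor2}, by running the classical tessellation (Poincar\'e polyhedron) argument for the tiling of $\Movp(X)$ by the $\Bir(X)$-translates of $\Pi$, and then correcting for the kernel of the natural linear action. First I would pass to the image $G := \sigma(\Bir(X)) \subset \GL(N^1(X)_{\RR})$ of the natural left action $g \mapsto (g^{-1})^{\ast}$ and work with $G$ acting on the convex cone $C := \Movp(X)$. By Corollary \ref{maincor2} there is a rational polyhedral fundamental domain $\Pi \subset C$, so $C = G\cdot\Pi$ with the interiors of distinct translates disjoint. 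The cone $\Pi$ has only finitely many codimension-one faces (walls) and finitely many codimension-two faces, and this finiteness is the source of everything that follows.

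The first key point is local finiteness of the tessellation: since $G$ preserves the lattice $N^1(X)$ and the tiles have pairwise disjoint interiors, only finitely many translates of $\Pi$ can meet any given face. In particular the set $S := \{\,g \in G\setminus\{1\} : g\Pi \text{ and } \Pi \text{ share a wall}\,\}$ is finite. I would then show $S$ generates $G$: given $g \in G$, choose generic interior points $p \in \Pi^{\circ}$ and $q \in (g\Pi)^{\circ}$; since $C$ is convex, the segment $[p,q]$ stays in $C$ and crosses walls transversally, passing through a finite chain of tiles $\Pi = g_0\Pi, g_1\Pi, \dots, g_m\Pi = g\Pi$ in which consecutive tiles share a wall. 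Then each $g_{j-1}^{-1}g_j$ lies in $S$, and the telescoping product gives $g = \prod_{j=1}^{m} g_{j-1}^{-1}g_j$, expressing $g$ in terms of $S$.

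For the relations, the tessellation yields a presentation whose only relators are the ``cycle'' relations read off around each codimension-two face: the finitely many tiles incident to such a face close up into a loop, giving a relation among the generators in $S$. Because $C$ is convex, hence simply connected, every relation among the wall-crossing generators is a consequence of these local codimension-two relations, of which there are finitely many up to the $G$-action. This is exactly the Poincar\'e polyhedron theorem in the present linear/conical setting, and it shows that $G$ is finitely presented. Finally I would pass from $G$ back to $\Bir(X)$ by bounding the kernel $K := \ker\sigma$. Every element of $\Bir(X)$ is an isomorphism in codimension one, since $X$ is its own minimal model; an element of $K$ fixes every divisor class, hence preserves an ample class, and a pseudo-automorphism preserving an ample class is a biregular automorphism. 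Since $X$ is a strict Calabi--Yau manifold, $H^0(X,T_X)\cong H^{\dim X-1,0}(X)=0$, so $\Aut^0(X)$ is trivial; by the theorem of Lieberman--Fujiki the kernel of $\Aut(X)\to\GL(N^1(X))$ is then finite, whence $K$ is finite. A finite extension of a finitely presented group is finitely presented, so $\Bir(X)$ is finitely presented.

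I expect the main obstacle to be the finite-presentation half of the tessellation argument, namely verifying local finiteness of the tiling near the (possibly irrational) boundary of $\Movp(X)$ and, above all, establishing that the codimension-two cycle relations exhaust all relations, i.e.\ the simple-connectivity input. By contrast, the generation statement and the finiteness of $K$ are comparatively routine and rely only on convexity of $\Movp(X)$ and on standard facts about automorphisms of Calabi--Yau manifolds.
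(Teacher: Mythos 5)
Your overall architecture coincides with the paper's: split $\Bir(X) \to \GL(N^1(X)_{\RR}, \RR)$ into a finite kernel and an image acting on $\Movp(X)$ with the rational polyhedral fundamental domain of Corollary \ref{maincor2}, then use the fact that an extension of a finitely presented group by a finite group is finitely presented. Your kernel argument is correct --- it is in essence the proof of \cite[Proposition 2.4]{Og14}, which is exactly what the paper cites --- and the extension step is standard. The genuine gap is in the step that carries all the weight: finite presentation of the image $G$. The paper obtains this by citing Looijenga \cite[Corollary 4.15]{Lo14}, a theorem designed precisely for groups acting on convex cones of this kind with rational polyhedral fundamental domains; you propose instead to rerun the classical Poincar\'e polyhedron/tessellation argument, and the obstacles you yourself flag at the end are not incidental technicalities --- they are exactly where that argument breaks down, and they are left unresolved.

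Concretely: $\Movp(X)$ is not a manifold and the $G$-action on it is not properly discontinuous. At rational boundary points (e.g.\ faces corresponding to fiber space structures) the stabilizer in $G$ can be infinite, so the translates $g\Pi$ accumulate there; in particular the set $\{g : g\Pi \cap \Pi \neq \varnothing\}$ can be infinite, since infinitely many translates may share a common boundary face, and your assertion that lattice-preservation plus disjoint interiors forces only finitely many translates to meet a given face is false for faces lying on $\partial\Mov(X)$. Your generation argument survives by retreating to interior segments (granted local finiteness of the tiling on $\Movox$, which itself requires proof), but the relations half does not: Poincar\'e's theorem needs an exact face-pairing and a completeness condition at the ``ideal'' boundary faces --- the analogue of the cusp conditions in hyperbolic geometry --- and the claim that the codimension-two cycle relations exhaust all relations is precisely the content that must be established in this conical, non-proper setting; convexity of the cone alone does not yield it, since the relevant simply connected space can only be taken to be the interior, where the tiles are non-compact and their closures interact at infinity. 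This is exactly what \cite{Lo14} proves. So either cite \cite[Corollary 4.15]{Lo14} as the paper does, or supply the missing analysis (for instance via a Macbeath-type presentation attached to a carefully chosen $G$-invariant open cover of $\Movox$); as written, the central step of your proposal is a plausible programme rather than a proof.
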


\begin{proof} By \cite[Proposition 2.4]{Og14}, the kernel of the map
\[ \Bir(X) \to \GL(N^1(X)_{\RR}, \RR) \]
is finite. On the other hand, by Theorem \ref{cipp} and \cite[Corollary 4.15]{Lo14}, the image of $\Bir(X) \to \GL(N^1(X)_{\RR}, \RR)$ is finitely presented, so the assertion follows.
\end{proof}

\section*{Acknowledgements}

I thank Professor Keiji Oguiso for his suggestions and encouragement. I also thank Professors Hiromichi Takagi, Chen Jiang and Hsueh-Yung Lin for answering my questions, and the referees for their comments. This work is partially supported by JSPS KAKENHI Grant Number 21J10242.

\medskip

\end{document}